\title{Rational circle-equivariant elliptic cohomology of $\cp V$}
\author{Matteo Barucco}
\address{Department of Mathematics, University of Warwick}
\email{Matteo.Barucco@Warwick.ac.uk}
\begin{document}
	\begin{abstract}
		We compute rational $\T$-equivariant elliptic cohomology of $\cp V$, where $\T$ is the circle group, and $\cp V$ is the $\T$-space of complex lines for a finite dimensional complex $\T$-representation $V$. Starting from an elliptic curve $\c$ over $\C$ and a coordinate data around the identity, we achieve this computation by proving that the $\T$-equivariant elliptic cohomology theory $E\c_{\T}$ built in \cite{john:elliptic}, and the $\T^2$-equivariant elliptic cohomology theory $E\c_{\T^2}$ built in \cite{barucco:t2} are $H_1$-split, for the subgroup $H_1=1\times \T<\T^2$. This result allows us to reduce the computation of $\ect(\cp V)$ to the computation of $\T^2$-elliptic cohomology of spheres of complex representations, already performed in \cite[Theorem 1.4]{barucco:t2}.
	\end{abstract}
\maketitle
\setcounter{tocdepth}{1}
\tableofcontents
\section{Introduction}
\subsection{Motivation}
	Two of the most prominent examples of topological cohomology theories have a one dimensional formal group associated: ordinary cohomology is associated to the additive formal group, while complex $K$-theory is associated to the multiplicative one. More generally one can associate to every complex orientable cohomology theory a one dimensional formal group \cite{quillen:formal}. A great source of formal groups is the formal completion of a one dimensional algebraic group at the identity. If we are working over an algebraically closed field for example, we are soon in short supply of one dimensional algebraic groups, except for the additive and multiplicative ones all the others are elliptic curves. These more exotic theories whose associated formal group arise as the formal completion of an elliptic curve $\c$ around the identity are called elliptic cohomology theories, and were first constructed in \cite{lan:periodic} \cite{lan:ell}.

	Given a compact Lie group $G$, it is only natural to seek for a $G$-equivariant counterpart of elliptic cohomology. If one takes complex K-theory as a model, than we can expect a good theory of equivariant elliptic cohomology to encode the full algebraic group $\c$, in contrast with the non-equivariant theory simply encoding the formal completion of $\c$ around the identity. This is in analogy to how equivariant K-theory works. By the Atiyah-Segal completion theorem \cite{atiyah:complet} if we complete the equivariant K-theory of the point at its augmentation ideal, we obtain the K-theory of $BG$, the classifying space of $G$:
	\[
	KU_G(*)^\wedge_I\cong KU(BG).
	\]
	Equivariant elliptic cohomology was first constructed by Grojnowski in \cite{groj:delocalised} with complex coefficients, and the specific motivation to study certain elliptic algebras. The subject has thrived since then and has found numerous applications to representation theory (for $G$ connected compact Lie group \cite{groj:delocalised}, \cite{ginzburg:elliptic}), to organize moonshine phenomena (especially for $G$ finite \cite{dev:delocalised}, \cite{dev:algebraicdescr}), and also quite surprisingly to physics \cite{stolz}, \cite{daniel:supersymmetric}.
	
	Given the complexity of defining Equivariant elliptic cohomology (\cite{lurie:ellipticI}, \cite{lurie:ellipticII}, \cite{lurie:ellipticIII}, \cite{gepner:onequiv}), and its algebraic origin also in the non-equivariant case, it is quite difficult to compute $G$-equivariant elliptic cohomology of meaningful $G$-spaces. Computations are often limited to fixed $G$-spaces or spheres of complex representations \cite{john:elliptic}, \cite{barucco:t2} to check that the theory defined is indeed elliptic cohomology. This paper has the goal to enhance computations at least for rational $\T$-equivariant elliptic cohomology, and in particular given an elliptic curve $\c$ and a finite dimensional complex $\T$-representation $V$ to compute rational $\T$-equivariant elliptic cohomology of $\cp V$: the $\T$-space of complex lines in $V$.
	
	More precisely we will prove the following which is the main theorem of the paper. 
	\begin{theorem}
		\label{thm:iso2}
		For every elliptic curve $\c$ over $\C$, if $\ect$ \eqref{eq:ecgbar} is the rational $\T$-equivariant elliptic cohomology theory built in \cite{john:elliptic}, and $V$ is a finite dimensional complex representation of $\T$, then:
		\begin{enumerate}
			\item If $V$ has one isotypic component, $V=\alpha z^n$ with $\alpha \geq 1$,
			\begin{equation*}
				\ect^k(\cp V)\cong \C^{\alpha-1}
			\end{equation*}
			for every $k\in \Z$.
			\item If $V$ has more than one isotypic component, $V=\bigoplus_n\alpha_n z^n$,
			\begin{equation*}
				\ect^k(\cp V)\cong 
				\begin{cases*}
					0 & $k$ \text{even} \\
					\C^d & $k$ \text{odd}.
				\end{cases*}
			\end{equation*}
			where $d=\sum_{i<j}\alpha_i\alpha_j(i-j)^2$, and $z$ is the natural representation of $\T$.
		\end{enumerate}
	\end{theorem}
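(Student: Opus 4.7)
The plan is to realise $\cp V$ as the orbit space of a free $H_1 = 1\times\T$ action on a $\T^2$-space and then transport the computation across the $H_1$-splitting advertised in the abstract. Concretely, enhance $V$ to a $\T^2$-representation $V'$ by letting the first circle act via the original representation and the second circle ($= H_1$) act by scalar multiplication. Then $H_1$ acts freely on the unit sphere $S(V') \subset V'$ with quotient $\T$-space $S(V')/H_1 \cong \cp V$, and the $\T^2$-equivariant cofibre sequence $S(V')_+ \to S^0 \to S^{V'}$ sits over the geometry I want to probe.

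The $H_1$-splitting of $E\c_{\T}$ and $E\c_{\T^2}$, proved in an earlier section of the paper, serves as a bridge between the $\T$-equivariant elliptic cohomology of an orbit space $X/H_1$ and the $\T^2$-equivariant elliptic cohomology of $X$ itself. Applying it to $X = S(V')$ should identify $\ect^*(\cp V)$ with $E\c_{\T^2}^*(S(V')_+)$, possibly up to a reduced/unreduced shift. I then run the long exact sequence associated to the cofibre triangle above: its outer terms are $E\c_{\T^2}^*(S^0)$, visible directly from the construction of $E\c_{\T^2}$, and $E\c_{\T^2}^*(S^{V'})$, computed in Theorem~1.4 of \cite{barucco:t2} applied to the finite-dimensional complex $\T^2$-representation $V'$. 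Extracting $E\c_{\T^2}^*(S(V')_+)$ as the relevant kernel/cokernel and pulling back through the splitting then yields $\ect^*(\cp V)$.

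A case analysis along the isotypic decomposition of $V$ finishes the argument. For $V = \alpha z^n$ the representation $V'$ has a single isotypic type, the connecting map splits symmetrically between the two parities, and a straightforward rank count produces $\C^{\alpha-1}$ in every degree. For $V = \bigoplus_n \alpha_n z^n$ with several weights, the same map turns out to be an isomorphism in one parity and to have cokernel in the other; the cokernel dimension is controlled by the weight differences $(i-j)$ that appear as orders of vanishing of the canonical Euler-class sections at the torsion divisors of $\c \times \c$ coming from pairs of distinct isotypic summands, with each pair $(i,j)$ contributing $\alpha_i \alpha_j (i-j)^2$ and summing to the advertised $d$.

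The main difficulty I anticipate is twofold. First, one must pin down precisely how the $H_1$-splitting interacts with the reduced/unreduced cohomology of $S(V')_+$, so that the object produced by the long exact sequence really is $\ect^*(\cp V)$ and not a neighbouring Borel-type theory; suspension isomorphisms and compactly supported conventions will need to be handled carefully. Second, tracking the exponent $(i-j)^2$ through the algebra requires a detailed analysis of the coherent sheaves on $\c \times \c$ underlying $E\c_{\T^2}^*(S^{V'})$ and of the order of vanishing of the Euler-class sections attached to pairs of distinct isotypic summands; this is where the bulk of the computational work should live.
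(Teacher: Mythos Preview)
Your proposal is correct and matches the paper's approach essentially exactly: the same $H_1$-quotient realisation $\cp V \cong S(V')/H_1$, the same $H_1$-splitting bridge, the same cofibre sequence $S(V')_+ \to S^0 \to S^{V'}$, and the same appeal to Theorem~1.4 of \cite{barucco:t2}, with the basepoint correction $\cp V_+ \simeq \cp V \vee S^0$ handling your reduced/unreduced concern. For the case analysis the paper's mechanism differs slightly from what you anticipate: in the multi-isotypic case the associated divisor $D_{V'}$ on $\c\times\c$ is shown to be \emph{ample} via Nakai--Moishezon, so Kodaira vanishing kills $H^{\geq 1}$ and Riemann--Roch on the surface gives $d = \tfrac12(D_{V'}.D_{V'}) = \sum_{i<j}\alpha_i\alpha_j(i-j)^2$ from a direct intersection-number count (an autoisogeny reduces $D_{d_r}.D_{d_s}$ to counting $\c[r-s]$), rather than from orders of vanishing of Euler-class sections; the single-isotypic case is handled separately with Serre duality, the projection formula, and a short lemma pinning down the image of $H^1(\O(-\alpha D)) \to H^1(\O_\X)$.
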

	Part (1) of this theorem simply checks our methods on previously done computations since $\cp {\alpha z^n}\cong \Cp^{\alpha-1}$ with the trivial $\T$-action, and can be found in the paper as Lemma \ref{thm:iso}. In contrast, part (2) is the main computational result, can be found in the paper as Theorem \ref{thm:mainthm} and to the knowledge of the author it is the first time it appears in the literature. We refer to Remark \ref{rem:dictionary} for a discussion of the geometry of this Theorem. We revise the construction of $\ect$ in Section \ref{sec:cpvcirclecase}.
	
	Notice that there are different representations, with different complex projective spaces that nonetheless have the same value for $d$ and therefore rational $\T$-equivariant elliptic cohomology does not distinguish them. For example if $\epsilon$ is $\C$ with the trivial action, $V=\epsilon \oplus 4z$ and $V'=\epsilon \oplus z^2$ have $d=4$, or $W=\epsilon\oplus 16z$, $W'=\epsilon\oplus z^4$ and $W''=\epsilon \oplus z\oplus 3z^2$ have $d=16$. Therefore they have the same elliptic cohomology even if their complex projective spaces are quite different.
	
	In Theorem \ref{thm:iso2} we use the construction of rational $\T$-equivariant elliptic cohomology of Greenlees \cite[Theorem 1.1]{john:elliptic} with algebraic models. Algebraic models have the advantage to simplify the picture and to maintain a more evident bond with the geometry of the curve $\c$, which are essential aspects when performing computations. This construction of rational equivariant elliptic cohomology using algebraic models has been generalized for the $2$-torus $\T^2$ \cite{barucco:t2}, and we will make essential use of the $\T^2$-theory as well in the computation.
    
\subsection{Algebraic Models}
	Algebraic models are a useful tool to study equivariant cohomology theories: equivariant $K$-theory, equivariant cobordism, Bredon and Borel cohomology just to name a few examples. For every compact Lie group $G$ one can construct a category of $G$-spectra where every cohomology theory $E_G^*(\_)$ is represented by a $G$-spectrum $E$, in the sense that for any based $G$-space $X$ we have $E_G^*(X)=[\Sigma^{\infty}X, E]^G_*$. To simplify the picture one can restrict the focus to rational $G$-equivariant cohomology theories, represented by rational $G$-spectra: the localization of $G$-spectra at the rational sphere spectrum. 
    
    The main idea of algebraic models \cite{john:mem} is for $G$ a compact Lie group to define an abelian category $\A(G)$ and an homology functor
    \begin{equation}
    	\label{eq:homologyfunctor}
    	\pi_*^{\A}:\sp G_{\Q} \to \A(G)
    \end{equation}
    together with a strongly convergent Adams spectral sequence to compute stable maps in rational $G$-spectra:
    \begin{equation}
    	\label{eq:ass}
    	\Ext^{*,*}_{\A}(\pi_*^\A(X),\pi_*^\A(Y)) \Longrightarrow [X,Y]^G_{*}
    \end{equation}
    Furthermore for certain groups the homology functor $\pi_*^{\A}$ can be lifted to a Quillen-equivalence 
    \begin{equation}
    	\label{eq:Quillen}
    	\sp G_\Q \simeq_Q d\A(G)
    \end{equation}
    between rational $G$-spectra and the category $d\A(G)$ of differential graded objects in $\A(G)$. The Adams spectral sequence \eqref{eq:ass} is a powerful tool to compute values of known theories, while the Quillen-equivalence \eqref{eq:Quillen} can be used to build entirely new theories, simply constructing  objects in $d\A(G)$. This latest method is precisely the one used in \cite{john:elliptic} and \cite{barucco:t2}: building an object $E\c_{G}$ in $d\A(G)$ and using the Quillen-equivalence \eqref{eq:Quillen} for the circle $G=\T$ and the $2$-torus $G=\T^2$ to define a $G$-equivariant elliptic cohomology theory. This method works since Greenlees and Shipley have extended the Quillen-equivalence \eqref{eq:Quillen} to tori of any rank $\T^r$ \cite[Theorem 1.1]{john:torusship}. 
    
    \subsection{The method of this paper}
    We start from an elliptic curve $\c$ over $\C$ together with a coordinate function $t_e$ in the local ring at the identity $e$ of $\c$. This gives us a rational $\T$-equivariant elliptic cohomology theory $\ect\in \A(\T)$ \cite[Theorem 1.1]{john:elliptic}, as well as a rational $\T^2$-equivariant elliptic cohomology theory $\ectwo\in \A(\T^2)$ \cite[Theorem 1.4]{barucco:t2}. We have also the complex abelian surface $\X=\c\times \c$.
    
    Given a complex $\T$-representation $V$ the main goal of this paper is to compute the reduced cohomology of the pointed space:
    \[
    \ect^*(\cp V).
    \]
 	We start pointing out the isomorphism of $\T$-spaces:
    \begin{equation}
    	\label{eq:cpv3}
    	\cp V\cong S(V\otimes_{\C} w)/H_1
    \end{equation}
    where $w$ is the natural complex representation of $H_1=1\times \T<\T^2$ and $S(V\otimes_{\C} w)$ is the $\T$-space of vectors of unit norm in the complex vector space $V\otimes_{\C} w$. Notice that $V\otimes_{\C} w$ is a complex representation of $\T^2$ of the same dimension of $V$, where $H_1$ acts on the second factor of the tensor product. The computation is made possible since $\ect$ and $\ectwo$ are $H_1$-split, where we view $\T$ as the quotient group $\T^2/H_1$:
    
    \begin{theorem}
    	\label{thm:split2}
    	There is a natural transformation of $\T^2$-cohomology theories
    	\begin{equation}
    		\label{eq:epsilonfirst}
    		\epsilon: \inf {\T}{\T^2} \ect \longrightarrow \ectwo
    	\end{equation}
    	which induces an isomorphism
    	\begin{equation}
    		\label{eq:equivalence2}
    		[\quotient {\T^2}H_+, \inf {\T}{\T^2} \ect]^{\T^2}_*\cong [\quotient {\T^2}H_+, \ectwo]^{\T^2}_*
    	\end{equation}
    	for every subgroup $H$ of $\T^2$ such that $H\cap H_1=\{1\}$.
    \end{theorem}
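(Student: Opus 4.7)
The plan is to construct $\epsilon$ inside the algebraic model $d\A(\T^2)$ via the Quillen equivalence \eqref{eq:Quillen}, and then to use the Adams spectral sequence \eqref{eq:ass} to reduce \eqref{eq:equivalence2} to a stalk-wise check in $\A(\T^2)$ at the subgroups $K\leq H$. Both $\ect$ and $\ectwo$ are built from the same coordinate datum $(\c,t_e)$, with $\ectwo$ living over the abelian surface $\X=\c\times\c$. The transformation $\epsilon$ should be the algebraic avatar of the first projection $\pi_1\colon \X\to\c$, which is $\T^2$-equivariant for the quotient $\T^2\to\T^2/H_1=\T$, and so realises inflation to $\T^2$ at the level of torsion data.

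For Step 1, I would describe $\inf{\T}{\T^2}\ect$ in $d\A(\T^2)$ by pushing $\ect$ through the algebraic inflation functor: at a closed subgroup $K<\T^2$ its value is the stalk of $\ect$ at $KH_1/H_1<\T$. The pullback $t_e\mapsto t_e\circ\pi_1$, together with the compatible pullback of meromorphic sections on $\c$ to $\X$, then assembles over the poset of closed subgroups into a natural map $\epsilon\colon\inf{\T}{\T^2}\ect\to\ectwo$. For Step 2, I would invoke \eqref{eq:ass}: both hom-sets in \eqref{eq:equivalence2} are computed by $\Ext^{*,*}_{\A(\T^2)}(\pi_*^\A(\T^2/H_+),-)$, so it suffices to show that $\epsilon$ is an isomorphism in $\A(\T^2)$ after smashing with $\T^2/H_+$. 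This concentrates the data at subgroups $K\leq H$, and the hypothesis $H\cap H_1=\{1\}$ forces $K\cap H_1=\{1\}$ as well. For Step 3, one notes that the quotient $\T^2\to\T$ restricts to an isomorphism $K\cong \bar K<\T$, so the torsion point of $\X$ corresponding to $K$ maps bijectively to the torsion point of $\c$ corresponding to $\bar K$; hence the stalks of $\inf{\T}{\T^2}\ect$ and $\ectwo$ at $K$ are identified by $\epsilon$ tautologically, both being computed from the same coordinate $t_e$ on $\c$.

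The main obstacle I expect is Step 1. Although $\pi_1$ is a clean geometric map, realising $\epsilon$ as a genuine morphism in $d\A(\T^2)$ requires a careful comparison of the torsion modules at every finite subgroup of $\T^2$ and of the rings of meromorphic sections used to define the differential in \cite{john:elliptic} and \cite{barucco:t2}, together with verification of compatibility with the vertex-edge structure maps of the diagrams defining $\A(\T)$ and $\A(\T^2)$. Once $\epsilon$ is constructed, Step 3 should follow directly from the transversality of torsion points lying outside $H_1$.
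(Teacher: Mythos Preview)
Your Steps 1 and 2 are on track and match the paper: $\epsilon$ is indeed defined at the vertex by the pullback $\pi_1^*$ (Definition~\ref{def:epsilon}, Lemma~\ref{lem:epsilon}), and one then compares the two Adams spectral sequences by showing that $\epsilon$ induces an isomorphism on $\Ext^{*,*}_{\A}(\pi_*^\A(G/H_+),-)$ (Theorem~\ref{thm:extgroupiso}).

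Step 3, however, contains a genuine gap: $\epsilon$ is \emph{not} a stalk-wise isomorphism in $\A(G)$, even at connected subgroups $K\leq H$. For instance at $K=H_2$, the inflated theory has $\phi^{H_2}\cong\oefh 2\otimes\overline{\kt}$ by \eqref{eq:inflated}, whereas $\phi^{H_2}(\ecg)=N_2$, the kernel of $\phi_0^2$ in \eqref{eq:exactecghi}; these are not isomorphic, and $\phi^{H_2}(\epsilon)$ is the non-surjective map of diagram \eqref{eq:diagramhi}. The bijection of torsion loci you invoke is correct, but the modules in $\A(G)$ record meromorphic functions on all of $\X$, not germs at finitely many points, so $\kt$ and $\kg$ remain genuinely different even after localising at the relevant Euler classes. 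Consequently ``smash with $G/H_+$ and compare stalks'' does not reduce to a tautology.

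What the paper does instead is build explicit injective resolutions of both targets (Sections~\ref{sec:injectiveresolutiont} and \ref{sec:injectiveresolutiong}), compute $\pi_*^\A(G/H_+)$ (Lemmas~\ref{lem:naturalcells} and \ref{lem:gf}), and evaluate $\Hom_\A(G/H_+,\inj_k)$ and $\Hom_\A(G/H_+,\inj_k')$ term by term (diagram~\eqref{eq:commdiaghom}). The isomorphism only emerges after these computations, and it rests on Lemma~\ref{lem:previousarticle}: $\pi_1^*$ identifies $\kt$ not with $\kg$ but with the quotient $\O_{D_2}/m_2$, i.e.\ with functions on $\X$ regular along $D_2$ modulo those vanishing there, and similarly $\O_{\c\gen{n_1}}\cong \O_F/m_2$. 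This restriction-to-the-divisor statement is the non-tautological algebro-geometric input that your transversality heuristic is gesturing toward but does not supply.
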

    We prove this Theorem in Sections \ref{sec:cpvmap} and \ref{sec:cpvsplit}. More precisely in Section \ref{sec:cpvmap} we build the map $\epsilon$ while in Section \ref{sec:cpvsplit} we prove the $H$-equivalence \eqref{eq:equivalence2}. 
    As an immediate Corollary:
    \begin{corollary}
    	\label{cor:h1quot2}
    	For any $H_1$-free $\T^2$-space $X$:
    	\begin{equation}
    		\ectwo^*(X)\cong \ect^*(X/H_1)
    	\end{equation}
    \end{corollary}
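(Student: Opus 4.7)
The plan is to combine Theorem~\ref{thm:split2} with the standard inflation change-of-groups adjunction, writing $\T = \T^2/H_1$ throughout.

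First, I would record the general fact that for any $\T$-spectrum $E$ and any $H_1$-free $\T^2$-space $X$ there is a natural isomorphism
\begin{equation*}
	[X_+, \inf{\T}{\T^2} E]^{\T^2}_* \;\cong\; [(X/H_1)_+, E]^{\T}_*,
\end{equation*}
arising from the adjunction between the quotient functor $(-)/H_1$ and inflation at the level of $\T^2$-spaces, extended to stable categories once one restricts to $H_1$-free inputs (where the categorical quotient $X/H_1$ behaves homotopically like a derived quotient). Applied with $E = \ect$ this reads
\begin{equation*}
	\ect^*(X/H_1) \;\cong\; [X_+, \inf{\T}{\T^2}\ect]^{\T^2}_*.
\end{equation*}

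Second, I would invoke Theorem~\ref{thm:split2}. Since $X$ is $H_1$-free, every isotropy subgroup $H \leq \T^2$ appearing in $X$ satisfies $H \cap H_1 = \{1\}$, so $X$ admits a $\T^2$-CW structure whose orbit cells are all of the form $\T^2/H_+$ with $H \cap H_1 = \{1\}$. Theorem~\ref{thm:split2} says precisely that $\epsilon$ induces an isomorphism on each such orbit cell. A cellular induction using the five lemma on the cofibre sequences of the skeletal filtration, together with a limit passage in the infinite-dimensional case, then extends this orbit-wise statement to
\begin{equation*}
	[X_+, \inf{\T}{\T^2}\ect]^{\T^2}_* \;\cong\; [X_+, \ectwo]^{\T^2}_* \;=\; \ectwo^*(X).
\end{equation*}

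Combining the two displays yields $\ectwo^*(X) \cong \ect^*(X/H_1)$, as required. The only non-formal ingredient is the orbit-wise equivalence \eqref{eq:equivalence2} supplied by Theorem~\ref{thm:split2}; the rest is a routine cellular bootstrap from orbit cells to arbitrary $H_1$-free spaces, which is consistent with the result being labelled a corollary. The cleanest place to carry out the bootstrap, and the step I would expect to need the most care, is inside the algebraic models $\A(\T)$ and $\A(\T^2)$, where both the inflation functor $\inf{\T}{\T^2}$ and the map $\epsilon$ admit explicit chain-level descriptions and where the relevant colimits (and any $\lim^1$ issues) are transparent.
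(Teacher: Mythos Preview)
Your proposal is correct and follows essentially the same two-step argument as the paper: first use that an $H_1$-free $X$ is built from cells $\T^2/H_+$ with $H\cap H_1=\{1\}$ so that Theorem~\ref{thm:split2} gives $[X_+,\inf{\T}{\T^2}\ect]^{\T^2}_*\cong[X_+,\ectwo]^{\T^2}_*$, and then apply the orbits--inflation adjunction to identify the left-hand side with $\ect^*(X/H_1)$. The paper's proof is a single displayed line invoking exactly these two ingredients; your closing remark about carrying out the bootstrap inside the algebraic models is unnecessary, as the paper handles it entirely at the level of spectra.
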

    
    We can apply this corollary to  \eqref{eq:cpv3}, reducing the computation of $\T$-equivariant elliptic cohomology to a computation of $\T^2$-equivariant elliptic cohomology:
    \begin{equation}
    	\label{eq:gequiv2}
    	\ect^* (\cp V_+)\cong \ect^*(\quotient {S(V\otimes w)_+}{H_1})\cong \ectwo^*(S(V\otimes w)_+).
    \end{equation}
    
    The $\T^2$-equivariant elliptic cohomology of $S(V\otimes w)_+$ is easier to compute in view of the cofibre sequence of $\T^2$-spaces:
    \begin{equation*}
    	S(V\otimes w)_+ \longrightarrow S^0 \longrightarrow S^{V\otimes w}
    \end{equation*}
    inducing a long exact sequence:
    \begin{equation}
    	\label{eq:leselliptic2}
    	\ectwo^*(S^{V\otimes w}) \longrightarrow \ectwo^*(S^0) \longrightarrow  \ectwo^*(S(V\otimes w)_+).
    \end{equation}
    The first two terms of \eqref{eq:leselliptic2} are computed in \cite[Theorem 1.4]{barucco:t2} that we recall here for easy reference:
    \begin{theorem}
    	\label{thm:fundconstr2}
    	For every elliptic curve $\c$ over $\C$ and coordinate $t_e\in \O_{\c,e}$, there exists an object $E\c_{\T^2}\in \A(\T^2)$ whose associated rational $\T^2$-equivariant cohomology theory $E\c_{\T^2}^*(\_)$ is 2-periodic. The value on the one point compactification $S^W$ for a complex $\T^2$-representation $W$ with no fixed points is given in terms of the sheaf cohomology of a line bundle $\odw$ over $\X=\c\times \c$:
    	\begin{equation}
    		\label{eq:valuetheory}
    		\ectwo^n(S^W)\cong 
    		\begin{cases*}
    			H^0(\X, \odw)\oplus H^2(\X, \odw) & $n$ even \\
    			H^1(\X, \odw) & $n$ odd.
    		\end{cases*}
    	\end{equation}
    \end{theorem}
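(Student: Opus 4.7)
The plan is to construct $E\c_{\T^2}$ by hand in the abelian category $\A(\T^2)$, then extract $\ectwo^*(S^W)$ by an explicit calculation inside the model, relying on the Greenlees--Shipley Quillen equivalence $\sp {\T^2}_{\Q}\simeq_Q d\A(\T^2)$ to transport the object to a genuine $\T^2$-spectrum.

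First I would recall the shape of objects in $\A(\T^2)$: they assemble data indexed by the closed cotoral subgroups $H<\T^2$, with transition maps controlled by inversion of Euler classes of characters. Using the elliptic curve $\c$ and the coordinate $t_e\in \O_{\c,e}$, I would define $E\c_{\T^2}$ so that at the identity it carries the structure sheaf of the abelian surface $\X=\c\times\c$, and at each cotoral subgroup $H<\T^2$ it carries the localisation obtained by inverting the Euler classes of characters not vanishing on $H$. The coordinate $t_e$ is what promotes each Euler class $e(\chi)$ to a concrete section of a translate line bundle on $\X$; the construction mirrors that of $E\c_{\T}$ in \cite{john:elliptic}, replacing the elliptic curve $\c$ with the surface $\X$. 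The $2$-periodicity of the resulting cohomology theory would then follow from an invertible degree-$2$ element in $\pi_2^{\A}(E\c_{\T^2})$ built from the trivialisation of the tangent line $T_e\c$ provided by $t_e$.

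For a complex $\T^2$-representation $W=\bigoplus\chi_i$ with no fixed points, I would exploit $S^W\simeq \bigwedge_i S^{\chi_i}$, so that applying $E\c_{\T^2}\wedge (-)$ and passing through the homology functor \eqref{eq:homologyfunctor} produces the Koszul complex on the Euler classes $e(\chi_i)$. By construction each $e(\chi_i)$ is a section of a line bundle $L_{\chi_i}$ on $\X$ with zero divisor supported on the image of $\ker \chi_i$; the Koszul complex therefore computes $R\Gamma(\X,\odw)$ for $\odw=\bigotimes_i L_{\chi_i}$, and this vanishes outside degrees $0,1,2$ because $\X$ is a surface. Combined with $2$-periodicity this yields precisely the three-term expression \eqref{eq:valuetheory}, with $H^0\oplus H^2$ in even degree and $H^1$ in odd degree.

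The main obstacle will be verifying that the purely local, subgroup-indexed data in $\A(\T^2)$ reassembles cleanly into the derived global sections of a genuine coherent line bundle on $\X$: one must check that the interplay between the infinitely many cotoral subgroups of $\T^2$ and the Euler class localisations does not introduce contributions beyond $H^0,H^1,H^2$ of $\odw$, and that the Koszul resolution corresponds correctly, under $\pi_*^{\A}$, to the one on $\X$ whose hypercohomology is $R\Gamma(\X,\odw)$. Once this compatibility is established, the identification \eqref{eq:valuetheory} is immediate from the algebraic model.
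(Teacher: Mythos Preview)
This theorem is not proved in the present paper at all: immediately before its statement the author writes that the values are ``computed in \cite[Theorem 1.4]{barucco:t2} that we recall here for easy reference'', and no argument is given. So there is no proof in this paper to compare your proposal against; any genuine comparison would have to be with \cite{barucco:t2}.

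That said, from what the paper does recall about the construction (Definition~\ref{def:ni2}, \eqref{eq:ecghom}, and especially the injective resolution \eqref{eq:injres4prime}--\eqref{eq:net}), the strategy in \cite{barucco:t2} is visibly different from yours. There $\ecg$ is defined as the kernel of an explicit map $\phi_0$ built from a Cousin complex for the structure sheaf of $\X$ in the torsion-point topology; its injective resolution in $\A(\T^2)$ is then \emph{by construction} the Cousin complex, and the Adams spectral sequence with source $\pi_*^{\A}(S^W)$ (which by \eqref{eq:sphere} is just a twist of the structure sheaf) collapses to the Cousin complex of $\O(-D_W)$, whose cohomology is the sheaf cohomology in \eqref{eq:valuetheory}. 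Your route via a Koszul complex on the Euler classes $e(\chi_i)$ is a plausible heuristic, but as written it has a gap: smashing $E\c_{\T^2}$ with $S^W=\bigwedge_i S^{\chi_i}$ and applying $\pi_*^{\A}$ does not literally produce a Koszul complex---by \eqref{eq:sphere} it produces a single suspended module, not a complex---so the passage from ``Koszul complex on the $e(\chi_i)$'' to ``computes $R\Gamma(\X,\odw)$'' is exactly the step that needs the Cousin/injective-resolution machinery you have not supplied. The acknowledged ``main obstacle'' in your last paragraph is in fact the entire content of the proof.
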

The associated divisor $D_W$ is defined as follows. First our elliptic curve $\c$ defines a functor $\XF$ from compact abelian Lie groups to complex manifolds. If $H$ is a compact abelian Lie group and $\c$ our fixed elliptic curve, define
\[
\XF(H):=\Hom_{\text{Ab}}(H^*,\c).
\]
Where we are considering group homomorphisms, and $H^*:= \Hom_{\text{Lie}}(H, \T)$ is the character group of $H$. Note this is an exact functor inducing an embedding $\XF(K)\hookrightarrow \XF(H)$ for every embedding $K\hookrightarrow H$.
The fundamental algebraic surface we are working on is $\X:=\XF(\T^2)\cong \c\times \c$. Note $\XF(H)$ is a subvariety of $\X$ of the same dimension of $H$. For every non-trivial character $z'$ of $\T^2$ consider the codimension $1$ subgroup $\Ker(z')$, then $\XF(\Ker(z'))$ is a codimension $1$ subvariety of $\X$ that is the associated divisor of $z'$. If $W=\sum_{z'}\alpha_{z'}{z'}$ define
\begin{equation}
	\label{eq:assdiv}
	D_W:= \sum_{z'} \alpha_{z'}\XF(\Ker(z')),
\end{equation}
where in both sums $z'$ runs over all the non-trivial characters of $\T^2$, and $\alpha_{z'}$ is a non-negative integer.
\begin{rem}
	One might argue that to compute rational $\T$-equivariant elliptic cohomology of $\cp V$ a more straightforward approach can be taken. Namely we have $\ect \in \A(\T)$ and if we explicitly compute the algebraic model $\pi_*^{\A(\T)}(\cp V_+)$, then we can use the Adams spectral sequence for the circle to compute directly $\ect^*(\cp V_+)$. Originally this was the method tried on this project, with a double motivation: first as a warm-up in the circle case before generalizing to higher tori, and second, once $\ectwo$ has been constructed, compute $\ectwo^*(S(V\otimes w)_+)$, and see if the two values match. If indeed $\ect^*(\cp V_+)\cong \ectwo^*(S(V\otimes w)_+)$, then one might expect that $\ect$ and $\ectwo$ are $H_1$-split (Theorem \ref{thm:split2}). The problem encountered with this method is that after computing the algebraic model $\pi_*^{\A(\T)}(\cp V_+)$ then computing maps in $\A(\T^2)$ from this object into an injective resolution of $\ect$ still retains a lot of complexity. More precisely it is still difficult to compute $\of$-module maps between arbitrary $\of$-module when none of them is a suspension of the base ring $\of$. Therefore this more conceptual approach of proving $H_1$-splitness and computing $\ectwo^*(S(V\otimes w)_+)$ has been taken. Moreover we hope to replicate and generalise this to other spaces such as Grassmannians of $n$-planes $\Gr_n(V)$ of a complex $\T$-representation.
\end{rem}

\subsection{Outline of the paper} 
The paper is organized as follows. In the interest of clarity we start in Section \ref{sec:1} with the main computational result: namely we compute rational $\T$-equivariant elliptic cohomology of $\cp V$ (Lemma \ref{thm:iso} and Theorem \ref{thm:mainthm}) assuming $H_1$-splitness (Theorem \ref{thm:split}), and we devote the remaining of the paper in proving $H_1$-splitness. 
In Section \ref{sec:cpvcirclecase} we revise the construction of circle equivariant elliptic cohomology from \cite{john:elliptic} to make it fit nicely with the construction of the $\T^2$-equivariant theory from \cite{barucco:t2}.
In Section \ref{sec:cpvmap} we construct the map $\epsilon$ (Lemma  \ref{lem:epsilon}) between the inflated $\ect$ and $\ectwo$, inducing the $H_1$-splitness. To do so we will also treat the inflation functor in the algebraic models for general tori, proving a simple construction of the functor in the algebraic models (Proposition \ref{prop:inflatedsystem}).
In Section \ref{sec:cpvsplit} we conclude the proof of $H_1$-splitness by proving the isomorphism \eqref{eq:equivalence2} for every subgroup $H$ intersecting $H_1$ trivially. This is done in Theorem \ref{thm:extgroupiso} by proving that $\epsilon$ induces an isomorphisms between the $\Ext$-groups of the two Adams spectral sequences. To obtain this result will be fundamental to compute the algebraic model for natural cells (Lemma \ref{lem:naturalcells} and Lemma \ref{lem:gf}), and to build an injective resolution of the inflated $\ect$ \eqref{eq:infres}.
We end with an Appendix on algebraic models recalling from the literature the definition of the category $\A(G)$, and especially injective objects in it.

\subsection{Notation and Conventions} 
The most important convention of the paper is that everything is rationalized without comment. In particular all spectra are meant localized at the rational sphere spectrum $S^0\Q$ and all the homology and cohomology is meant with $\Q$ coefficients. Tensor products $\otimes$ are meant over $\Q$, or over the graded ring with only $\Q$ in degree zero and zero elsewhere. By subgroup of a compact Lie group we always mean closed subgroup. With the symbol $\T^r$ we mean the torus of rank $r$: compact connected abelian Lie group of dimension $r$. We denote $G=\T^2$ the $2$-torus in all the paper (except where explicitly state otherwise like in the Appendix). The collection of connected closed codimension $1$ subgroups of $G$ is $\{H_i\}_{i\geq 1}$ indexed with $i\geq 1$, and with $H_1=1\times \T$ and $H_2=\T\times 1$ being the two privileged subgroups. We denote $\bar G:=G/H_1\cong H_2\cong \T$ the quotient group. We denote $H_i^j$ the subgroup of $G$ with $j$ connected components and identity component $H_i$: we will refer to the subgroups with identity component $H_i$ as being along the $i$-direction. We denote with $F$ a generic finite subgroup of $G$ and with $H$ and $K$ generic closed subgroups of $G$ of any dimension. Given a module $M$ we will denote $\overline M$ the two periodic version of $M$: it is a graded module with $M$ in each even degree and zero in odd degrees. We denote elements in direct sums and products in the following way: $x=\{x_i\}_{i}\in \bigoplus_{i\geq 1}M_i$: this identifies the element $x$ in the direct sum that has $i$-th component $x_i\in M_i$.

We will freely use the standard notation of schemes from Algebraic Geometry. We denote $\c$ our fixed elliptic curve over $\C$, $e$ is the identity of the elliptic curve and for a positive integer $n$, $\c[n]$ is the subgroup of elements of $n$-torsion, while $\c\gen n$ is the subset of elements of exact order $n$. We will use $P$ to denote a point of $\c$ of finite order. We have the complex abelian surface $\X=\c\times \c$, and denote $\eta(C)$ the generic point of a closed set $C$.

We will use some essential notation involving $\ectwo$ from \cite{barucco:t2}. Following \cite[(2.7), (2.8)]{barucco:t2}, for every $i\geq 1$ we pick a character $z_i:G \to \T$ having $H_i$ as kernel. If $z_i(x,y)=x^{\lambda_i}y^{\mu_i}$ we denote $\pi_i:\X \to \c$ the corresponding projection: $\pi_i(P,Q)=\lambda_iP+\mu_iQ$. Then for every $i,j\geq 1$ we can define the codimension 1 subvariety $\dij:=\pi_i^{-1}(\c\gen j)$ of $\X$. The most important ring for $\ectwo$ is $\K$: the ring of meromorphic functions of $\X$ with poles only in the collection $\{\dij\}_{ij}$ \cite[(3.8)]{barucco:t2}. We use $\O_{\dij}$ to denote the subring of $\K$ of those functions regular at the closed set $\dij$, while  $m_{ij}<\O_{\dij}$ is the ideal of those functions vanishing at $\dij$. 

We will freely use the standard notation for algebraic models , and we recall it in the appendix \ref{appendix:algmodel}. In particular $\A(G)$ is an abelian category with graded objects and no differentials, while $d\A(G)$ is the category of objects of $\A(G)$ with differentials. Cohomology is unreduced unless indicated to the contrary with a tilde, so that $\hbg H=\tilde H^*(BG/H_+)$ is the unreduced cohomology ring. To ease the notation sometimes we will omit the base ring we are taking the tensor product over and denote it with an index: $\tens i$. This in turn means the ring $\oefh i$ or its $F$-component $\hbg {H_i^{n_i}}$. We will make extensive use of the isomorphisms $\hbg {H_i^j}\cong \Q[c_{ij}]$ \eqref{eq:cij} and $\hbg F\cong \Q[\xa,\xb]$ \eqref{eq:xaxb}.

\subsection{Acknowledgements} This work is part of my PhD thesis at the University of Warwick under the supervision of John Greenlees. I am extremely grateful to John for his guidance, comments and ideas on this project. I would like in particular to thank Julian Lawrence Demeio for many useful conversations, suggestions and ideas especially on the Algebraic Geometry side.

\section{Elliptic cohomology of CP(V)}
\label{sec:1}
Recall that $G=\T^2$, and $H_1=1\times \T$ and $H_2=\T\times 1$ are the two privileged subgroups. We have also the quotient $\bar G:=G/H_1\cong H_2\cong \T$ as group of equivariance. We have fixed an elliptic curve $\c$ over $\C$ together with a coordinate $t_e\in \O_{\c,e}$. This gives us a rational $\bar G$-equivariant elliptic cohomology theory $\ecgbar\in \A(\bar G)$ \eqref{eq:ecgbar}, as well as a rational $G$-equivariant elliptic cohomology theory $\ecg\in \A(G)$ (Theorem \ref{thm:fundconstr2}). We have also the complex abelian surface $\X=\X_G=\c\times \c$ associated to $G$ and $\c$.

Given a $\bar G$ complex representation $V$ the main goal of the paper is to compute the reduced cohomology of the pointed space:
\[
\ecgbar^*(\cp V)
\]
where $\cp V$ is the $\bar G$-space of complex lines in  $V$. First of all notice the isomorphism of $\bar G$-spaces:
\begin{equation}
	\label{eq:cpv2}
	\cp V\cong S(V\otimes_{\C} w)/H_1
\end{equation}
where $w$ is the natural complex representation of $H_1$ and $S(V\otimes_{\C} w)$ is the $\bar G$-space of vectors of unit norm in the complex vector space $V\otimes_{\C} w$. Notice that $V\otimes_{\C} w$ is a complex representation of $G$ of the same dimension of $V$, where $H_1$ acts on the second factor of the tensor product, while $\bar G$ acts on the first one. The computation is made possible since $\ecgbar$ and $\ecg$ are $H_1$-split:

\begin{theorem}
	\label{thm:split}
	Let $\ecg$ be $G$-elliptic cohomology (Theorem \ref{thm:fundconstr2}), and $\ecgbar$ be $\bar G$-elliptic cohomology \eqref{eq:ecgbar}. Then there is a natural transformation of $G$-cohomology theories
	\[
	\epsilon: \inflat \ecgbar \longrightarrow \ecg
	\]
	which induces an isomorphism
	\begin{equation}
		\label{eq:equivalence}
		[\quotient GH_+, \inflat \ecgbar]^G_*\cong [\quotient GH_+, \ecg]^G_*
	\end{equation}
	for every subgroup $H$ of $G$ such that $H\cap H_1=\{1\}$.
\end{theorem}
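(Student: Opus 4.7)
The plan is to work throughout in the algebraic model $\A(G)$ via the Quillen equivalence $\sp G_\Q \simeq_Q d\A(G)$, reducing the statement to a geometric construction followed by an Ext-group comparison in the Adams spectral sequence.

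For the construction of $\epsilon$, I would first give a concrete recipe for the inflation functor $\inf{\bar G}{G}$ at the level of algebraic models (this is Proposition \ref{prop:inflatedsystem}). The object $\ecgbar \in \A(\bar G)$ is built from the elliptic curve $\c$ and the coordinate $t_e$, with data supported on the curve $\XF(\bar G)$; applying inflation produces an object $\inflat \ecgbar \in \A(G)$ whose geometric data lives on the divisor $\XF(H_2) \subset \X = \c \times \c$, pulled back along the quotient $G \to \bar G$. Since $\ecg$ is built from the same elliptic datum but over the full surface $\X$, the structure sheaves and meromorphic function rings associated to $\inflat \ecgbar$ embed canonically into those defining $\ecg$; assembling these compatible inclusions into a morphism in $d\A(G)$ yields the natural transformation $\epsilon$.

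The isomorphism \eqref{eq:equivalence} then follows from the Adams spectral sequence \eqref{eq:ass} once one verifies that $\epsilon$ induces an isomorphism
\[
\Ext^{*,*}_{\A(G)}(\pi_*^\A(G/H_+),\, \inflat \ecgbar) \longrightarrow \Ext^{*,*}_{\A(G)}(\pi_*^\A(G/H_+),\, \ecg)
\]
whenever $H \cap H_1 = \{1\}$. This requires two computational inputs: a workable description of the algebraic model $\pi_*^\A(G/H_+)$ of the natural cell (the content of Lemma \ref{lem:naturalcells} and Lemma \ref{lem:gf}), and an injective resolution of $\inflat \ecgbar$ in $\A(G)$ patterned after the known resolution of $\ecg$ but concentrated on the divisor $\XF(H_2)$.

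The main obstacle will be the Ext comparison itself. The geometric heuristic is that the hypothesis $H \cap H_1 = \{1\}$ forces $\XF(H)$ to meet the divisor $\XF(H_2)$ transversally inside $\X$, so the torsion subvarieties entering $\pi_*^\A(G/H_+)$ pick up no contribution from the $H_1$-direction collapsed by inflation; consequently the extra components of $\ecg$ relative to $\inflat \ecgbar$ contribute trivially to $\Ext$. Turning this picture into a rigorous identification of the two Ext groups, via the injective resolution and an explicit computation of $\of$-module maps out of $\pi_*^\A(G/H_+)$, will be the crux of the argument and is planned for Section \ref{sec:cpvsplit}. Once this is in place, Corollary \ref{cor:h1quot2} combined with the cofibre sequence for $S(V \otimes w)_+$ reduces Theorem \ref{thm:iso2} to Theorem \ref{thm:fundconstr2}.
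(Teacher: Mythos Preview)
Your proposal is correct and follows essentially the same route as the paper: construct $\epsilon$ in $\A(G)$ via Proposition~\ref{prop:inflatedsystem} and the pullback $\pi_1^*:\kt\to\K$, then use the Adams spectral sequence to reduce \eqref{eq:equivalence} to an Ext comparison, computed from the algebraic models of natural cells (Lemmas~\ref{lem:naturalcells}, \ref{lem:gf}) and an injective resolution of $\inflat\ecgbar$ obtained by tensoring the resolution of $S^0$ with $\nt$. One small imprecision: the inflated data is not ``concentrated on $\XF(H_2)$'' but rather pulled back along $\pi_1:\X\to\c$, and the injective resolution of $\inflat\ecgbar$ is built by tensoring over $\oefh 1$ rather than by restricting to a divisor; this does not affect your strategy.
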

We are going to prove this Theorem in the remaining sections of this paper, in particular in Section \ref{sec:cpvmap} we build the map $\epsilon$ while in Section \ref{sec:cpvsplit} we prove the $H$-equivalence \eqref{eq:equivalence}. In the interest of clarity we now present how to compute elliptic cohomology of $\cp V$ starting from $H_1$-splitness.
 
We have an immediate Corollary:
\begin{corollary}
	\label{cor:h1quot}
	For any $H_1$-free $G$-space $X$:
	\begin{equation}
		\ecg^*(X)\cong \ecgbar^*(X/H_1)
	\end{equation}
\end{corollary}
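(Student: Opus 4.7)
The plan is to chain together Theorem \ref{thm:split} with the standard adjunction between inflation and the $H_1$-orbit functor in equivariant stable homotopy theory, producing a sequence of natural isomorphisms
\[
\ecg^*(X) \;\cong\; [X_+, \inflat \ecgbar]^G_* \;\cong\; [(X/H_1)_+, \ecgbar]^{\bar G}_* \;=\; \ecgbar^*(X/H_1),
\]
each step being an essentially formal consequence of earlier material.

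For the first isomorphism I would run a thick-subcategory argument. Let $\mathcal{C}$ denote the full subcategory of $G$-spectra $Y$ on which $\epsilon$ induces an isomorphism $[Y, \inflat \ecgbar]^G_* \xrightarrow{\cong} [Y, \ecg]^G_*$. Since both sides are cohomology theories in $Y$, $\mathcal{C}$ is closed under suspensions, cofibre sequences, retracts, arbitrary wedges, and (via the usual Milnor short exact sequences) homotopy colimits. Theorem \ref{thm:split} asserts that $\mathcal{C}$ contains every orbit $\quotient{G}{H}_+$ with $H\cap H_1=\{1\}$. Since $X$ is $H_1$-free, every isotropy group of a point of $X$ meets $H_1$ trivially, so $\Sigma^\infty X_+$ is a homotopy colimit of cells of exactly that form, and hence lies in $\mathcal{C}$.

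For the second isomorphism I would invoke the adjunction between the $H_1$-orbit functor and inflation: for any $G$-spectrum $Y$ and any $\bar G$-spectrum $E$,
\[
[Y/H_1, E]^{\bar G}_* \;\cong\; [Y, \inflat E]^G_*,
\]
where $Y/H_1$ denotes the (derived) $H_1$-orbit spectrum. When $Y = \Sigma^\infty X_+$ for $X$ an $H_1$-free $G$-space, the derived orbits agree with the strict quotient, so $Y/H_1 \simeq \Sigma^\infty(X/H_1)_+$ as $\bar G$-spectra, which delivers the claimed identification after setting $E=\ecgbar$.

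I do not anticipate any genuine obstacle, as the substantive content is packaged in Theorem \ref{thm:split}; the rest is a standard adjunction plus a thick-subcategory argument. The one ingredient that deserves explicit care is the closure of $\mathcal{C}$ under homotopy colimits, which is needed to treat $H_1$-free $G$-spaces $X$ that fail to be finite $G$-CW complexes.
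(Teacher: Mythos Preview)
Your proposal is correct and follows essentially the same argument as the paper: reduce to cells $G/H_+$ with $H\cap H_1=\{1\}$ using that $X$ is $H_1$-free, apply Theorem \ref{thm:split} to identify $[X_+,\ecg]^G_*\cong [X_+,\inflat\ecgbar]^G_*$, and then use the orbits--inflation adjunction. The paper's proof is terser (it simply asserts the cellular reduction and the adjunction in one line), whereas you spell out the thick-subcategory closure properties and flag the care needed for infinite $G$-CW complexes; but there is no substantive difference in strategy.
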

\begin{proof}
	Since $X$ is $H_1$-free, it is built using cells $G/H_+$ with $H\cap H_1=\{1\}$, for which we have the equivalence \eqref{eq:equivalence}. Therefore:
	\[
	\ecg^*(X)=[X, \ecg]^G_*\cong [X,\inflat \ecgbar]^G_*\cong [X/H_1, \ecgbar]^{\bar G}_*=\ecgbar^*(X/H_1)
	\]
	where the second isomorphism is the orbits-inflation adjunction.
\end{proof}

Applying this corollary to  \eqref{eq:cpv2} allows us to reduce the computation of $\bar G$-equivariant elliptic cohomology to a computation of $G$-equivariant elliptic cohomology:
\begin{equation}
	\label{eq:gequiv}
	\ecgbar^* (\cp V_+)\cong \ecgbar^*(\quotient {S(V\otimes w)_+}{H_1})\cong \ecg^*(S(V\otimes w)_+).
\end{equation}
\begin{rem}
	Notice we have to add a disjoint basepoint to $S(V\otimes w)$, leading us to compute $\ecgbar^* (\cp V_+)$. To obtain $\cp V$ without the added basepoint notice that stably:
	\begin{equation*}
		\cp V_+ \cong \cp V \vee S^0
	\end{equation*}
	and therefore
	\begin{equation}
		\label{eq:basepoint}
		\ecgbar^* (\cp V_+)\cong \ecgbar^*(\cp V)\oplus \ecgbar^*(S^0)\cong \ecgbar^*(\cp V) \oplus \C,
	\end{equation}
	since by \cite[Theorem 1.1]{john:elliptic} we have
	\begin{equation*}
		\ecgbar^k(S^0)\cong 
		\begin{cases*}
			H^0(\c, \O_\c)\cong \C & $k$ \text{even} \\
			H^1(\c, \O_\c)\cong \C & $k$ \text{odd}.
		\end{cases*}
	\end{equation*}
\end{rem}

$G$-equivariant elliptic cohomology of $S(V\otimes w)_+$ is easier to compute in view of the cofibre sequence of $G$-spaces:
\begin{equation}
	S(V\otimes w)_+ \longrightarrow S^0 \longrightarrow S^{V\otimes w}
\end{equation}
inducing a long exact sequence:
\begin{equation}
	\label{eq:leselliptic}
	\ecg^*(S^{V\otimes w}) \longrightarrow \ecg^*(S^0) \longrightarrow  \ecg^*(S(V\otimes w)_+).
\end{equation}
The first two terms of \eqref{eq:leselliptic} are computed in \eqref{eq:valuetheory}, therefore we can deduce the third term from kernel and cokernel of the first map.

To find the associated divisor $D_{V\otimes w}$ \eqref{eq:assdiv} decompose $V$ as a sum of one dimensional complex representations $V=\bigoplus_n\alpha_nz^n$ where $z$ is the natural representation of $\bar G$ and $\alpha_n \geq 0$. Notice $z^n\otimes w$ is a one dimensional representation of $G$ with kernel the connected codimension $1$ subgroup:
\[
H_{d_n}:=\Ker(z^n\otimes w)=\{(x,y)\in G=\T\times \T\mid x^ny=1\}
\]
and corresponding divisor:
\begin{equation}
	\label{eq:ddn}
	D_{d_n}=\XF(H_{d_n})=\{(P,Q)\in \X =\c\times \c\mid nP+Q=e\}.
\end{equation}
Therefore $V\otimes w=\bigoplus_n\alpha_n(z^n\otimes w)$ is a complex representation of $G$ of the same dimension as $V$, whose associated divisor is:
\begin{equation}
	\label{eq:divisor}
	D_{V\otimes w}=\sum_n\alpha_nD_{d_n}.
\end{equation}
We have two distinct proves depending if the divisor \eqref{eq:divisor} has only one coefficient different from zero, or more than one coefficient different from zero.

When $V$ has only one isotypic component, then $\cp V\cong \Cp^{\alpha-1}$ with trivial $\T$-action. We can prove part (1) of Theorem \ref{thm:iso2}:
\begin{lemma}
	\label{thm:iso}
	If $V$ has only one isotypic component: $V=\alpha z^n$ with $\alpha\geq 1$ then
	\begin{equation}
		\ecgbar^k(\cp V)\cong \C^{\alpha-1}
	\end{equation}
	for every $k\in \Z$.
\end{lemma}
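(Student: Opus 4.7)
The plan is to apply the general pipeline developed in the remainder of Section~\ref{sec:1}: combine the $H_1$-splitness isomorphism \eqref{eq:gequiv} with the long exact sequence \eqref{eq:leselliptic} and the sheaf-cohomology formula of Theorem~\ref{thm:fundconstr2}. Via Corollary~\ref{cor:h1quot} applied to \eqref{eq:cpv2}, together with the stable splitting \eqref{eq:basepoint}, it suffices to prove $\ecg^k(S(V\otimes w)_+)\cong \C^\alpha$ in every degree $k$; subtracting the basepoint contribution $\ecgbar^*(S^0)=\C$ then gives $\C^{\alpha-1}$.

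For $V=\alpha z^n$ the associated divisor \eqref{eq:divisor} is $D_{V\otimes w}=\alpha D_{d_n}$, and by \eqref{eq:ddn} the curve $D_{d_n}=\{(P,-nP)\}$ is the graph of $-n\colon\c\to\c$ in $\X=\c\times\c$. The automorphism $\phi\colon(P,Q)\mapsto(P,nP+Q)$ of $\X$ identifies $D_{d_n}$ with the fibre $\c\times\{e\}$ of the second projection, so
\[
\O_\X(\alpha D_{d_n})\;\cong\;\phi^{*}\bigl(\O_\c\boxtimes\O_\c(\alpha e)\bigr),
\]
and Künneth combined with Riemann--Roch on $\c$ yields $h^{0}=h^{1}=\alpha$ and $h^{2}=0$. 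By Theorem~\ref{thm:fundconstr2} this gives $\ecg^k(S^{V\otimes w})\cong\C^{\alpha}$ in both parities, while the analogous computation with $\O_\X$ itself gives $\ecg^k(S^0)\cong\C^{2}$.

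Writing $f\colon \ecg^*(S^{V\otimes w})\to\ecg^*(S^0)$ for the map induced by $S^0\hookrightarrow S^{V\otimes w}$, exactness of the resulting $2$-periodic long exact sequence yields
\[
\dim\ecg^k(S(V\otimes w)_+)=\alpha+2-\mathrm{rk}(f^{\mathrm{even}})-\mathrm{rk}(f^{\mathrm{odd}}),
\]
so the problem reduces to showing that the total rank of $f$ is $2$. The main obstacle is exactly this rank computation: $f$ should be identified, via the Thom isomorphism, with multiplication by the Euler class of the faithful complex representation $V\otimes w$, and since $V\otimes w$ contains no trivial summand this Euler class acts non-degenerately, giving the required total rank~$2$ and hence $\ecg^k(S(V\otimes w)_+)\cong\C^{\alpha}$ for all $k$.
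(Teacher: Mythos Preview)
Your overall strategy (reduce to the long exact sequence and compute sheaf cohomology via the automorphism $\phi$ and K\"unneth) is sound and, if carried through, would give an argument arguably cleaner than the paper's. But the final paragraph contains a genuine gap.

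First, two minor issues. Theorem~\ref{thm:fundconstr2} expresses $\ecg^*(S^W)$ in terms of $H^*(\X,\O(-D_W))$, not $H^*(\X,\O(D_W))$; you compute the latter. By Serre duality on the abelian surface the dimensions happen to coincide, so the numerical outcome $\C^\alpha$ in both parities is correct, but this should be said. Second, the representation $V\otimes w=\alpha(z^nw)$ is \emph{not} faithful: its kernel is exactly the codimension-$1$ subgroup $H_{d_n}$.

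The real problem is the rank claim. Asserting that ``the Euler class acts non-degenerately'' because $V\otimes w$ has no trivial summand does not determine $\mathrm{rk}(f^{\mathrm{even}})+\mathrm{rk}(f^{\mathrm{odd}})$; a priori this total rank could be anything between $0$ and $4$. The paper handles this by analysing the inclusion $\O(-\alpha D)\hookrightarrow\O_\X$ degree by degree: degree~$0$ is trivial, degree~$2$ is Serre-dual to the inclusion of constants $\C\hookrightarrow H^0(\X,\O(\alpha D))$, and degree~$1$ requires a separate Lemma (Lemma~\ref{lem:imh1}) establishing that the image of $H^1(\X,\O(-\alpha D))\to H^1(\X,\O_\X)$ is exactly the one-dimensional subspace $H^1(\X,\O(-D))$. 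Your K\"unneth set-up would in fact make this step \emph{easier}: under $\phi$ the map becomes $\mathrm{id}_{\O_\c}\boxtimes\bigl(\O_\c(-\alpha e)\hookrightarrow\O_\c\bigr)$, and the induced map on $H^1$ of the second factor is the Serre-dual of $H^0(\c,\O_\c)\hookrightarrow H^0(\c,\O_\c(\alpha e))$, hence surjective of rank~$1$. Running K\"unneth then gives rank~$0$ in degree~$0$, rank~$1$ in degree~$1$, and rank~$1$ in degree~$2$, for a total of~$2$. You should replace the Euler-class sentence with this computation.
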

\begin{proof}
	Combining \eqref{eq:gequiv} and \eqref{eq:leselliptic} we only need to understand kernel and cokernel of the map
	\begin{equation}
		\label{eq:ellcohmap}
		\ecg^*(S^{V\otimes w}) \longrightarrow \ecg^*(S^0).
	\end{equation}
	By \eqref{eq:divisor} the divisor associated to $V\otimes w$ is $D_{V\otimes w}=\alpha D_{d_n}$, denote $D:=D_{d_n}$ the smooth irreducible curve. By \eqref{eq:valuetheory} it is enough to understand kernel and cokernel for the map
	\begin{equation}
		\label{eq:cohmap}
		H^*(\X,\O(-\alpha D)) \longrightarrow H^*(\X,\O_{\X})
	\end{equation}
	induced by the inclusion of sheaves $\O(-\alpha D)\hookrightarrow \O_{\X}$. We compute this map for the various degrees.
	
	In degree zero $H^0(\X,\O(-\alpha D))=0$, since $\X$ is a compact complex abelian surface, therefore a function regular on all the surface is constant and since it has a zero at $D$ it is the constant zero. Therefore \eqref{eq:cohmap} in degree zero has zero kernel and cokernel $\C$.
	
	In degree $2$ by Serre duality 
	\begin{equation}
		\label{eq:h2dual}
		\hiso 2{-\alpha D}\cong \hiso 0{\alpha D}^{\vee}\cong \C^{\alpha}
	\end{equation}
	where the second isomorphism is obtained as follows. Consider the divisor $D'=\alpha (e)$ on the single elliptic curve $\c$, defining the line bundle $\L:=\O_{\c}(\alpha (e))$. The projective morphism $f:=\pi_{d_n}: \X \to \c$ is such that $f^*\L\cong \O(\alpha D)$, and $f_*\O_{\X}\cong \O_\c$ by \cite[Exercise 3.12 Chapter 5]{Liu}. As a consequence
	\begin{equation}
		\label{eq:doubledirect}
		f_*f^*\L=f_*(\O_{\X}\otimes_{\O_{\X}}f^*\L)\cong f_*(\O_{\X})\otimes_{\O_\c} \L \cong \O_\c \otimes_{\O_\c} \L\cong \L
	\end{equation}
	by the projection formula \cite[exercise 5.1]{hartshorne}. Therefore
	\begin{equation}
		\label{eq:directimage}
		\hiso 0{\alpha D}\cong  H^0(\X, f^*\L) \cong H^0(\c, f_*f^*\L)\cong H^0(\c, \L) \cong \C^{\alpha},
	\end{equation}
	where the second isomorphism is due to the equality of functors $\Gamma(\X,\_) = \Gamma(\c, f_*(\_))$, while the last one is Riemann-Roch for the elliptic curve $\c$.
	Moreover the map
	\[
	\C\cong H^0(\X,\O_{\X}) \rightarrowtail \hiso 0{\alpha D} \cong \C^{\alpha}
	\]
	is injective being the inclusion of the constant functions. By Serre duality the dual map, \eqref{eq:cohmap} in degree 2, is surjective with kernel $\C^{\alpha-1}$.
	
	In degree 1 by Serre duality 
	\[
	H^1(\X,\O(-\alpha D))\cong H^1(\X,\O(\alpha D))^\vee \cong \C^\alpha
	\] 
	since by Riemann-Roch \cite[Theorem I.12]{beauville}
	\begin{equation}
		h^0(\X,\alpha D)-h^1(\X,\alpha D)=\frac 12 \alpha^2(D.D)=0.
	\end{equation}
	We can see $(D.D)=0$ since $D$ is linearly equivalent to the translated $D+\lambda$, which is an irreducible curve disjoint from $D$, or alternatively simply using the genus formula \cite[Theorem I.15]{beauville} since $D$ is irreducible of genus 1 being isomorphic to the elliptic curve $\c$. By Lemma \ref{lem:imh1} the image of the map \eqref{eq:cohmap} in degree 1 is precisely $\hiso 1{-D}\cong \C$, combining this with the computations \eqref{eq:cohmap} we obtain that the kernel in degree 1 of \eqref{eq:cohmap} is $\C^{\alpha-1}$ while the cokernel is $\C$.
	
	In conclusion we obtain that $\ecgbar^k(\cp V_+)$ has in even degrees the degree zero cokernel and the degree one kernel of \eqref{eq:cohmap}, therefore is isomorphic to $\C^\alpha$. In odd degrees we have the degree 1 cokernel and the degree 2 kernel of \eqref{eq:cohmap}, so $\C^{\alpha}$ as well. By \eqref{eq:basepoint} $\ecgbar^k(\cp V)\cong \C^{\alpha-1}$ for every $k\in \Z$.
\end{proof}
\begin{lemma}
	\label{lem:imh1}
	Let $D=D_{d_n}$ be any of the smooth divisors \eqref{eq:ddn}. For every integer $\alpha\geq 1$ the image of the map
	\begin{equation}
		\hiso 1{-\alpha D} \to H^1(\X,\O_{\X})
	\end{equation}
	is precisely $\hiso 1{-D} \subset H^1(\X,\O_{\X})$.
\end{lemma}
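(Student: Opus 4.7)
The plan is to prove this by induction on $\alpha$, the base case $\alpha=1$ being tautological. For the inductive step, at each level $k\in\{2,\dots,\alpha\}$ I would consider the short exact sequence of sheaves on $\X$,
\begin{equation*}
0 \to \O_{\X}(-kD) \to \O_{\X}(-(k-1)D) \to \O_{\X}(-(k-1)D)|_D \to 0,
\end{equation*}
whose quotient simplifies to $\O_D$ thanks to the key geometric fact that $\O_{\X}(D)|_D$ is trivial. This triviality is built into the setup: $D=D_{d_n}$ is the fiber of the projection $\pi_{d_n}:\X\to \c$ over $e$, so $\O_{\X}(D)\cong \pi_{d_n}^{*}\O_{\c}(e)$, and restricting to $D$ gives the trivial line bundle; equivalently, $D$ is linearly equivalent to any of its disjoint translates $D+\lambda$, forcing $(D.D)=0$.

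Taking the long exact sequence of cohomology and using that $H^0(\X,\O_{\X}(-(k-1)D))=0$ for $k\geq 2$ (established in the proof of Lemma \ref{thm:iso}), one obtains
\begin{equation*}
0 \to H^0(D,\O_D) \to H^1(\X,\O_{\X}(-kD)) \to H^1(\X,\O_{\X}(-(k-1)D)) \to H^1(D,\O_D) \to H^2(\X,\O_{\X}(-kD)).
\end{equation*}
Since $D\cong \c$ is an elliptic curve, $h^0(\O_D)=h^1(\O_D)=1$; combined with the values $h^1(\X,\O_{\X}(-kD))=k$ and $h^2(\X,\O_{\X}(-kD))=k$ already computed in Lemma \ref{thm:iso}, a dimension count forces the map $H^1(\X,\O_{\X}(-kD))\twoheadrightarrow H^1(\X,\O_{\X}(-(k-1)D))$ to be surjective.

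Composing these surjections from $k=\alpha$ down to $k=2$ shows that the image of $H^1(\X,\O_{\X}(-\alpha D))\to H^1(\X,\O_{\X}(-D))$ is all of $H^1(\X,\O_{\X}(-D))$, and postcomposing with the injection $H^1(\X,\O_{\X}(-D))\hookrightarrow H^1(\X,\O_{\X})$ (which is injective because $H^0(\O_{\X})\to H^0(\O_D)$ is the restriction-of-constants isomorphism, killing the connecting map in the $\alpha=1$ long exact sequence) yields the stated image. The main obstacle is to pin down the triviality of the normal bundle cleanly; once this is in place the argument reduces to dimension bookkeeping in a chain of long exact sequences whose terms are all controlled by Lemma \ref{thm:iso}.
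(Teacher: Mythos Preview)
Your proof is correct and follows essentially the same route as the paper: the same ladder of short exact sequences $0\to\O(-kD)\to\O(-(k-1)D)\to\O_D\to 0$, the same dimension bookkeeping to force surjectivity of $H^1(-kD)\to H^1(-(k-1)D)$, and the same injectivity argument at $\alpha=1$ via $H^0(\O_\X)\xrightarrow{\cong}H^0(\O_D)$. The only cosmetic differences are that the paper obtains $\O_\X(D)|_D\cong\O_D$ from the adjunction formula rather than from the fibration description, and runs the dimension count in the $H^1$--$H^2$ segment of the long exact sequence rather than the $H^0$--$H^1$ segment.
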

\begin{proof}
	When $\alpha=1$ we have that $D$ is a subvariety of $\X$ whose ideal sheaf is precisely $\O_{\X}(-D)$, and structure sheaf $\O_D$ that we can see as a sheaf on $\X$ via the inclusion map $\iota: D \to \X$. We have the closed subscheme short exact sequence \cite[14.3.B]{vakil:notes}:
	\begin{equation}
		\label{eq:subsexact}
		0 \to \O(-D) \to \O_{\X} \to \iota_*\O_D \to 0
	\end{equation}
	inducing a long exact sequence in cohomology. Since $H^0(\X,\O_{\X})\xrightarrow{\cong} H^0(\X,\iota_*\O_D)\cong \C$ then $\hiso 1{-D}\to H^1(\X,\O_{\X})$ is injective.
	
	When $\alpha \geq 2$ we can tensor the exact sequence \eqref{eq:subsexact} with the invertible sheaf $\O(-(\alpha-1)D)$ obtaining the exact sequence of sheaves:
	\begin{equation}
		\label{eq:sesiter}
		0 \to \O(-\alpha D) \to \O(-(\alpha-1)D) \to \iota_*\O_D(-(\alpha-1)D) \to 0
	\end{equation}
	inducing a long exact sequence in cohomology. By the adjunction formula \cite[Theorem 1.37]{Liu}:
	\begin{equation*}
		(\omega_{\X}\otimes \O(D))\restriction_D =\omega_D
	\end{equation*}
	where in our case $\omega_{\X}\cong \O_{\X}$ and $\omega_D\cong \O_D$, resulting in $\iota_*\O_D(D)\cong \iota_*\O_D$ and by induction $\iota_*\O_D(\beta D)\cong \iota_*\O_D$ for every integer $\beta$.
	
	Therefore we have that the last term of \eqref{eq:sesiter} is $\iota_*\O_D$, and we can analyse the degree 2 piece of the long exact sequence induced:
	\begin{equation}
		\label{eq:lesiterated}
		H^1(\iota_*\O_D) \to H^2(\O(-\alpha D)) \to H^2(\O(-(\alpha-1)D)) \to H^2(\iota_*\O_D)=0.
	\end{equation}
	The first map in \eqref{eq:lesiterated} is injective simply by dimension computation: $H^1(\iota_*\O_D)\cong \C$, and the other two terms are computed in \eqref{eq:h2dual}. Consequently the map $H^1(\O(-\alpha D)) \twoheadrightarrow H^1(\O(-(\alpha-1)D))$ is surjective. In conclusion we have the chain of maps:
	\[
	H^1(\O(-\alpha D))\twoheadrightarrow H^1(\O(-(\alpha-1)D)) \twoheadrightarrow \dots \twoheadrightarrow H^1(\O(-D)) \rightarrowtail H^1(\O_{\X})
	\]
	where all the maps are surjective except the last one that is injective, giving us the desired result.
\end{proof}

When $V$ has more than one isotypic component, then the proof of part (2) of Theorem \ref{thm:iso2} has a different approach:
\begin{theorem}
	\label{thm:mainthm}
	If $V$ has more than one isotypic component: $V=\bigoplus_n\alpha_n z^n$ with $\alpha_n\geq 0$ then
	\begin{equation}
		\ecgbar^k(\cp V)\cong 
		\begin{cases*}
			0 & $k$ even \\
			\C^d & $k$ odd.
		\end{cases*}
	\end{equation}
	where $d=\sum_{i<j}\alpha_i\alpha_j(i-j)^2$.
\end{theorem}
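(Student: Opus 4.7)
Following the approach of Lemma \ref{thm:iso}, the plan is to combine \eqref{eq:gequiv} with the long exact sequence \eqref{eq:leselliptic} coming from the cofibre $S(V\otimes w)_+ \to S^0 \to S^{V\otimes w}$; by Theorem \ref{thm:fundconstr2} this reduces the task to computing, in each degree, the kernel and cokernel of the map
\[
H^*(\X, \O(-D)) \longrightarrow H^*(\X, \O_\X)
\]
induced by the sheaf inclusion $\O(-D)\hookrightarrow \O_\X$, where by \eqref{eq:divisor} the associated divisor is $D = D_{V\otimes w} = \sum_n \alpha_n D_{d_n}$.

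The main computational input is the cohomology of $\O(D)$. First I would compute intersection numbers on $\X$: each $D_{d_n}$ is the image of the embedding $\c \hookrightarrow \X$, $P \mapsto (P, -nP)$, hence isomorphic to $\c$, and since $K_\X = 0$ adjunction (or equivalently the translation argument used in Lemma \ref{thm:iso}) gives $D_{d_n}^2 = 0$. For $n \neq m$ the intersection $D_{d_n} \cap D_{d_m}$ consists of the $(n-m)^2$ points $(P, -nP)$ with $(n-m)P = e$, and a short tangent space calculation shows these intersections are transverse, yielding $D_{d_n} \cdot D_{d_m} = (n-m)^2$. Therefore $D^2 = 2\sum_{n<m}\alpha_n\alpha_m(n-m)^2 = 2d$, and Riemann-Roch on the abelian surface $\X$ gives $\chi(\O(D)) = \frac{1}{2} D^2 = d \neq 0$. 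Since $D$ is effective, $H^0(\X, \O(D)) \neq 0$, so I would invoke Mumford's Index Theorem for line bundles on abelian varieties: as $\O(D)$ is non-degenerate and has non-zero sections, its index must be $0$, forcing $H^i(\X, \O(D)) = 0$ for $i > 0$ and $h^0(\X, \O(D)) = d$. Serre duality combined with $\omega_\X \cong \O_\X$ then gives $H^0(\X, \O(-D)) = H^1(\X, \O(-D)) = 0$ and $H^2(\X, \O(-D)) \cong \C^d$.

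It then remains to unwind the long exact sequence. In even degree the map $\C^d = H^0(\O(-D)) \oplus H^2(\O(-D)) \to H^0(\O_\X) \oplus H^2(\O_\X) = \C^2$ is zero on the $H^0$-summand and on the $H^2$-summand is $\C$-linear dual to the inclusion of constants $H^0(\O_\X) \hookrightarrow H^0(\O(D))$, hence surjective with kernel $\C^{d-1}$; in odd degree the map $H^1(\O(-D)) \to H^1(\O_\X)$ is simply $0 \to \C^2$. Feeding these into the $2$-periodic long exact sequence produces short exact sequences which should yield $\ecg^0(S(V\otimes w)_+) \cong \C$ and $\ecg^1(S(V\otimes w)_+) \cong \C^{d+1}$. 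Transporting along \eqref{eq:gequiv} and then stripping off the basepoint contribution $\ecgbar^*(S^0) \cong \C$ in each degree via \eqref{eq:basepoint} produces the stated answer. I expect the main obstacle to be the vanishing $H^i(\X, \O(D)) = 0$ for $i > 0$: Mumford's Index Theorem is the most compact route, but an alternative is to prove ampleness of $\O(D)$ directly via Nakai-Moishezon, where the hypothesis of multiple isotypic components is exactly what guarantees $D \cdot C > 0$ for every irreducible curve $C$, the critical case being when $C$ is a translate of an elliptic subgroup of $\X = \c \times \c$ and so has $C^2 = 0$.
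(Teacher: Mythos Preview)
Your proposal is correct and follows the same overall strategy as the paper: reduce via \eqref{eq:gequiv} and \eqref{eq:leselliptic} to the map $H^*(\X,\O(-D))\to H^*(\X,\O_\X)$, compute $H^*(\X,\O(D))$ by Riemann--Roch plus a vanishing result, Serre-dualize, and read off the answer from the long exact sequence. The only substantive difference is in the vanishing step. The paper establishes that $D$ is ample via the Nakai--Moishezon criterion (exactly the alternative you sketch in your final sentence) and then applies Kodaira vanishing to obtain $H^i(\X,\O(D))=0$ for $i\geq 1$; you instead invoke Mumford's Index Theorem directly, using that $\chi(\O(D))=d\neq 0$ and $H^0\neq 0$ to force index zero. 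Both routes are valid. Mumford's theorem is arguably the more natural tool on an abelian variety and bypasses the curve-by-curve check $D\cdot C'>0$, while the paper's route makes the role of the hypothesis ``more than one isotypic component'' geometrically transparent: it is precisely what guarantees that any irreducible curve disjoint from one $D_{d_r}$ (hence a translate of it) must still meet some $D_{d_s}$ with $s\neq r$.
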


\begin{proof}
	Exactly as in the proof of Theorem \ref{thm:iso} we only need to understand kernel and cokernel of the map
	\begin{equation}
		\label{eq:cohmap2}
		H^*(\X,\O(-D)) \longrightarrow H^*(\X,\O_{\X})
	\end{equation}
	where $D:=D_{V\otimes w}=\sum_n\alpha_nD_{d_n}$ is the divisor associated to $V\otimes w$ \eqref{eq:divisor}.
	
	When $V$ has more than one isotypic components, the associated divisor $D$ is an ample divisor. To show this we can use the Nakai-Moishezon criterion \cite[Theorem V.1.10]{hartshorne}. This criterion states that $D$ is an ample divisor on $\X$ if and only if $D.D>0$ and $D.\c'>0$ for every irreducible curve $\c'$ on $\X$. First let us prove that $D_{d_r}.D_{d_s}=(r-s)^2$ for any two integers $r$ and $s$. If $r=s$ then $D_{d_r}.D_{d_r}=0$ since $D_{d_r}$ is linearly equivalent to the translated $D_{d_r}+\lambda$, which is an irreducible curve disjoint from $D_{d_r}$, or alternatively simply using the genus formula \cite[Theorem I.15]{beauville} since $D_{d_r}$ is irreducible of genus 1 being isomorphic to the elliptic curve $\c$. If $r\neq s$ than the two curves $D_{d_r}$ and $D_{d_s}$ are transverse in each point of intersection therefore we simply need to count the intersection points \cite[Definition I.3]{beauville}. There is an autoisogeny of $\X$ bringing $D_{d_r}$ to $D_{d_0}=D_2=\{(P,Q)\in \X \mid Q=e\}$. Under this isogeny $D_{d_s}$ is brought to $D_{d_{s-r}}=\{(P,Q)\in \X \mid (s-r)P+Q=e\}$. The intersection of these last two curves is easily computed to be $\c[s-r]$ which has cardinality $(s-r)^2$. Therefore:
	\begin{equation}
		\label{eq:symbilin}
		D.D=\sum_{i,j\in \Z}\alpha_i\alpha_j(D_{d_i}.D_{d_j})=2\sum_{i<j}\alpha_i\alpha_j(i-j)^2>0
	\end{equation}
	since there are at least two different integers $r\neq s$ such that $\alpha_r, \alpha_s >0$ ($V$ has more than one isotypic component). It remains to show that for every irreducible curve $\c'$ in $\X$, the curve $\c'$ intersects either $D_{d_r}$ or $D_{d_s}$, so that $D.\c'>0$. If $\c'\cap D_{d_r}=\emptyset$ than $\c'$ is necessarily parallel to $D_{d_r}$, meaning $\c'=D_{d_r}+\lambda$ is a translated of $D_{d_r}$, but all these curves intersect $D_{d_s}$ since $s\neq r$.
	
	The sheaf $\O(D)$ is invertible and ample, so we can use Kodaira vanishing theorem \cite[Remark III.7.15]{hartshorne} obtaining $\hiso iD=0$ for $i\geq 1$. Therefore only the zeroth cohomology is nonzero and we can compute it with Riemann-Roch \cite[Theorem I.12]{beauville}:
	\[
	h^0(\X,D)=\frac 12(D.D)=\sum_{i<j}\alpha_i\alpha_j(i-j)^2=d
	\]
	where $(D.D)$ is computed in \eqref{eq:symbilin}. Moreover the map 
	\[
	\C\cong H^0(\X,\O_{\X}) \rightarrowtail \hiso 0D \cong \C^d
	\]
	is injective being the inclusion of the constant functions. By Serre duality the dual map, \eqref{eq:cohmap2} in degree 2, is surjective with kernel $\C^{d-1}$.
	
	In conclusion we obtain that $\ecgbar^k(\cp V_+)$ has in even degrees the degree zero cokernel of \eqref{eq:cohmap2}: $H^0(\X,\O_{\X}) \cong  \C$. In odd degrees we have the degree 1 cokernel and the degree 2 kernel of \eqref{eq:cohmap2}, so $\C^{d+1}$. By \eqref{eq:basepoint} $\ecgbar^k(\cp V)$ has zero in even degrees and $\C^d$ in odd degrees.
\end{proof}

\section{The circle case revisited}
\label{sec:cpvcirclecase}
We revise the construction of circle equivariant elliptic cohomology from \cite{john:elliptic}. We begin by recalling the definition of the TP-topology for the single elliptic curve \cite[Definition 7.1]{john:elliptic}:
\begin{definition}
	\label{def:tptopc}
	Over the set $\c$ define the torsion point topology $\tp \c$ with generating closed subsets $\{\c\gen n\}_{n\geq 1}$, where $\c\gen n$ are the elements of exact order $n$ in $\c$.
\end{definition}
This is a ringed topological space with the pushforward of the structure sheaf: $\tp \O_{\c}$. Likewise \cite[Definition 8.2]{john:elliptic} choose a coordinate for $\c$ at $e$:
\begin{choice}
	\label{choice:te}
	Choose $t_e\in \tp \O_{\c,e}\subset \O_{\c,e}$ vanishing to the first order at $e$.
\end{choice}

Since we are working in characteristic zero there is a unique logarithm for the formal group law of $\c$ (see for example \cite[Proposition 3.1]{strickland:formal}), namely a strict isomorphism with the additive formal group law: 
\[
\hatn e=f(t_e)\in (\tp {\O_{\c,e}})^\wedge _{m_e}\cong \C[[t_e]],
\]
where $m_e<\tp {\O_{\c,e}}$ is the ideal of those functions vanishing at $e$.

Denoting $[n]:\c \to \c$ the multiplication by $n$ map in the elliptic curve, we can pullback coordinate for the various $\c\gen n$:
\begin{definition}
	For every integer $n\geq 1$ define the coordinate and completed coordinate:
	\begin{equation}
		\begin{split}
			t_n &:=[n]^*(t_e)\in \tp {\oce} \\
			\hatn n &:=[n]^*(\hatn e)=f(t_n)\in (\tp {\O_{\c,e}})^\wedge _{m_e}
		\end{split}
	\end{equation}
\end{definition}
\begin{rem}
	Notice $t_n\in \ocn n$ since it has a zero of degree one on all points of $\c[n]$, so we can use it as a coordinate for the irreducible closed subset $\c\gen n$. In the same way $\hatn n$ is an element in the completed ring $(\ocn n)^\wedge$.
\end{rem}
\begin{definition}
	The fundamental rings are the stalks in the TP-topology:
	\begin{equation}
		\label{eq:kt}
		\begin{split}
			\kt &:=\tp {\O_{\c, \eta(\c)}}=\{f\in \K(\zar \c)\mid f \; \text{has poles only at points of finite order of} \; \c\} \\
			\ocn n &:= \tp {\O_{\c, \eta(\c\gen n)}}= \{f\in \kt\mid f \; \text{is regular at} \; \c \gen n\}
		\end{split}
	\end{equation}
\end{definition}

Denote $\bar \F$ the family of finite subgroups of $\bar G$: for every $n\geq 1$ we have the cyclic subgroup $C_n$ of order $n$. The fundamental ring for the algebraic models $\A(\bar G)$ is
\[
\ofbar =\prod_{n\geq 1} \hbgbar n= \prod_{n\geq 1} \Q[c_n]
\]
where $c_n\in \hbgbar n$ of degree $-2$ is the Euler class \eqref{eq:eulerclass} of a character having kernel $C_n$.
\begin{definition}
	Define the torsion injective $\ofbar$-module
	\begin{equation}
		\label{eq:ttt}
		\ttt:=\bigoplus_{n\geq 1}\overline {\kt/\ocn n}
	\end{equation}
	where the action is defined on the $n$-th component as follows. The Euler class $c_n$ acts as $\hatn n$:
	\begin{equation*}
		c_n\cdot [f]=[\hatn n \cdot f]\in \kt/\ocn n.
	\end{equation*}
	Notice the action is well defined since $t_n$ vanishes at first order at $\c\gen n$, so that powers of $\hatn n$ do not contribute after a certain integer and the sum is finite.
\end{definition}

\begin{definition}
	Define the graded surjective $\ofbar$-module map
	\begin{equation}
		\label{eq:q}
		q:\egbarinv \ofbar \otimes \overline \kt \twoheadrightarrow \ttt
	\end{equation}
	as follows. On the $n$-th component on pure tensor elements:
	\[
	c_n^k\otimes f \xmapsto{q} [\hatn n^k\cdot f].
	\]
	Notice this is well defined also for negative powers of the euler class simply considering the inverse power series $\hatn n ^{-1}$.
\end{definition}
\begin{definition}
	Let $\nt:=\Ker(q)$ be the kernel of the map $q$ \eqref{eq:q}. We have the exact sequence of $\ofbar$-module:
	\begin{equation}
		\label{eq:exacts1}
		\begin{tikzcd}
			\nt \arrow[r,tail] & \egbarinv \ofbar \otimes \overline \kt \arrow[r, two heads, "q"] & \ttt.
		\end{tikzcd}
	\end{equation}
	Define the algebraic model for circle-equivariant elliptic cohomology $\ecgbar \in \A(\bar G)$ to be the object:
	\begin{equation}
		\label{eq:ecgbar}
		\ecgbar :=
		\left[ 
		\begin{tikzcd}
			\egbarinv \ofbar \otimes \overline \kt \\
			\nt \arrow[u, tail]
		\end{tikzcd}
		\right]
		\in \A(\bar G)
	\end{equation}
	with structure map the natural inclusion of \eqref{eq:exacts1}.
\end{definition}
\begin{rem}
	Since $\ttt$ is torsion injective we have immediately the injective resolution of $\ecgbar$ in $\A(\bar G)$:
	\begin{equation*}
		\left[ 
		\begin{tikzcd}
			\egbarinv \ofbar \otimes \overline \kt \\
			\nt \arrow[u]
		\end{tikzcd}
		\right]
		\rightarrowtail
		\left[ 
		\begin{tikzcd}
			\egbarinv \ofbar \otimes \overline \kt \\
			\egbarinv \ofbar \otimes \overline \kt \arrow[u]
		\end{tikzcd}
		\right]
		\twoheadrightarrow
		\left[ 
		\begin{tikzcd}
			0 \\
			\ttt \arrow[u]
		\end{tikzcd}
		\right].
	\end{equation*}
\end{rem}
In this way the object \eqref{eq:ecgbar} we have defined here has the same values on spheres of complex representations as the object constructed in \cite[Theorem 1.1]{john:elliptic}. The analogy with the construction of $\T^2$-equivariant elliptic cohomology of \cite[Theorem 1.4]{barucco:t2} is more transparent now. Namely the Cousin complex for $\tp {\O_{\c}}$ is the following short exact sequence of sheaves that we can find in \cite[Corollary 9.3]{john:elliptic}:
\begin{equation}
	\label{eq:ccsingle}
	\tp {\O_{\c}} \rightarrowtail \i {\c}(\kt) \twoheadrightarrow \bigoplus_{n\geq 1}\i {\c\gen n}(\quotient {\kt}{\ocn n})
\end{equation}
where we denote $\iota_Z(M)$ the sheaf of constant value $M$ on the closed subset $Z$. Moreover \eqref{eq:ccsingle} is a flabby resolution of the structure sheaf that we can use to link the values of $\ecgbar$ on spheres of complex representations with the appropriate cohomology of the associated line bundle on $\c$ \cite[Theorem 1.1]{john:elliptic}.

\section{Building the map}
\label{sec:cpvmap}
The aim of this section is to build the map $\epsilon: \inflat \ecgbar \to \ecg$ of Theorem \ref{thm:split}. The first step is to identify the functor $\inflat:\A(\bar G) \to \A(G)$ and we do this in general for tori of any rank.

\subsection{The inflation functor}
Only for this subsection $G=\T^r$ is a generic torus of some rank $r$, we fix a connected subgroup $K$ and we define the quotient group $\bar G:=G/K$, with quotient map $q:G \to \bar G$. For a generic connected subgroup $H$ of $G$ denote $L:=\gen {H,K}$ the subgroup generated by $H$ and $K$, and $\bar H:=q(L)$ the image subgroup of $H$ in $\bar G$.
\begin{proposition}
	\label{prop:inflatedsystem}
	For a $\bar G$-spectrum $\bar X$, the value of $\pai {\A(G)}(\inflat \bar X)$ \eqref{eq:paistarfunctor} at a connected subgroup $H$ is given by:
	\begin{equation}
		\phi^H(\inf {\bar G}G \bar X)\cong \oef H \tens {\oef L} \phi^{\bar H}(\bar X),
	\end{equation}
	with structure maps induced by the structure maps of $\pai {\A(\bar G)}(\bar X)$.
\end{proposition}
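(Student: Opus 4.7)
The plan is to trace the inflation functor through the Greenlees--Shipley Quillen equivalence, computing the algebraic model $\pai{\A(G)}(\inflat \bar X)$ subgroup by subgroup. The essential geometric input is the natural equivalence
\[
\Phi^H(\inflat \bar X) \simeq \Phi^{\bar H}(\bar X)
\]
valid for any $\bar G$-spectrum $\bar X$ and any connected subgroup $H \leq G$. I would verify this first on representation spheres $S^V$ for a $\bar G$-representation $V$, where it reduces to the pointwise identity $(q^*V)^H = V^{\bar H}$ via $q(H) = HK/K = \bar H$; extension to all $\bar G$-spectra then follows from the cocontinuous symmetric monoidal nature of $\Phi^?$.

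Next I would identify the module structure. The characters of $\bar G$ trivial on $\bar H$ pull back along $q$ to exactly those $G$-characters trivial on $H$ whose kernels contain $K$, producing a ring map $\oef L \to \oef H$ under which $\oef L$ is identified with $\oef{\bar H}$. Accordingly $\phi^{\bar H}(\bar X)$ becomes an $\oef L$-module via this identification, and the $\oef H$-module structure on $\phi^H(\inflat \bar X)$ is obtained by letting the remaining Euler classes --- those of $G$-characters trivial on $H$ but not factoring through $\bar G$ --- act freely. This yields the claimed extension of scalars
\[
\phi^H(\inflat \bar X) \cong \oef H \tens {\oef L} \phi^{\bar H}(\bar X).
\]

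The compatibility with the structure maps is then diagrammatic. For connected subgroups $H \leq H'$ of $G$ with associated $L \leq L'$ and $\bar H \leq \bar H'$ in $\bar G$, the naturality of $\inflat$ and of $\Phi^?$ produces a commuting square in which the structure map of $\pai{\A(G)}(\inflat \bar X)$ is induced from that of $\pai{\A(\bar G)}(\bar X)$ by applying the base change $\oef H \tens {\oef L} (-)$ along the obvious square of ring maps.

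The main obstacle is substantiating the free base change claim: one must show that the Euler classes of those $G$-characters trivial on $H$ but not on $L$ act freely on $\phi^H(\inflat \bar X)$, so that the module is genuinely a tensor product rather than a completed or twisted variant. This is best checked on representation spheres, where the inflated sphere $S^{q^*V}$ carries no information at the extra characters and their Euler classes act by multiplication in the polynomial subring of $\oef H$ freely complementary to $\oef L$; extending by cell induction then completes the argument.
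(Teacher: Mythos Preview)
Your outline is close in spirit to the paper's argument, but there are two genuine gaps.

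First, the geometric fixed point identity is misstated. The correct statement is
\[
\Phi^H(\inflat \bar X)\;\simeq\;\inf{\bar G/\bar H}{G/H}\bigl(\Phi^{\bar H}\bar X\bigr),
\]
an equivalence of $G/H$-spectra; the right-hand side lives in $\bar G/\bar H$-spectra and must be inflated. This matters for the next point.

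Second, and more seriously, you do not engage with the $DE\F_+$ smash factor in the definition
\[
\phi^H(Y)=\pi_*^{G/H}\bigl(DE\F/H_+\wedge \Phi^HY\bigr).
\]
Knowing $\Phi^H(\inflat\bar X)$ does not by itself determine $\phi^H(\inflat\bar X)$; one still has to compare $DE\F/H_+$ for the group $G/H$ with $DE\F/\bar H_+$ for $\bar G/\bar H$. The tensor product $\oef H\tens{\oef L}(-)$ arises precisely from this comparison, not from an abstract ``free action of Euler classes''. The paper handles this by reducing to the case $H=1$ and invoking an explicit map of $G$-spectra $\inflat DE\F/K_+\to DE\F_+$ (from \cite[p.~22]{john:torusI}); this gives a \emph{natural transformation} of homology theories in $\bar X$, whose target is exact because $\of$ is flat over $\oef K$. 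One then checks the transformation is an isomorphism on $S^0$ (where both sides are the structure ring) and extends to all cells by the Rep$(G)$-iso argument using Thom isomorphism. Your ``check on spheres and cell induction'' needs exactly such a natural transformation to compare the two sides along; without it you have computed two modules that happen to be abstractly isomorphic on spheres, but no map between them that could be propagated by induction.
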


To prove this result we will need some lemmas about $G$-spectra that holds more generally for compact Lie groups (with the appropriate assumptions of subgroups being normal). Moreover these results do not rely on the fact that we have rationalized everything and therefore holds also if we do not localize at $S^0\Q$.

Given a $G$-spectrum $X$ we can pick a strictly increasing indexing sequence $V_1 \subsetneq V_2 \subsetneq \dots \subsetneq \U$ for the complete $G$-universe $\U$, and use the canonical presentation of $X$ \cite[pg. 12]{kervaire}, namely there is a weak equivalence of $G$-spectra
\begin{equation}
	\label{eq:canpres}
	X\simeq \hocolim_{V_n} \susp {-V_n}\sus X(V_n)
\end{equation}
where the homotopy colimit is taken over the indexing sequence $\{V_n\}$ and $\sus X(V_n)$ is the suspension spectrum of the $G$-space $X(V_n)$. In general we will prove that two spectra $X$ and $Y$ are weakly equivalent proving that for every $V_n$ in an indexing sequence the two $G$-spaces $X(V_n)$ and $Y(V_n)$ are isomorphic.
\begin{definition}
	If $H\subseteq G$ is a subgroup of $G$, we say that the indexing sequence $\{V_n\}$ is an $H$-separated indexing sequence, if the sequence of $H$-fixed points $V_1^H\subsetneq V_2^H \subsetneq \dots \subseteq \U^H$ is a strictly increasing indexing sequence for the $G/H$-universe $\U^H$.
\end{definition}
When this happens we have a nice description for the geometric fixed point:
\begin{equation}
	\label{eq:geomfixed}
	\fix H X \simeq \hocolim_{V_n^H}\susp {-V_n^H}\sus ((X(V_n))^H).
\end{equation}
In this sense $(\fix H X)(V_n^H)=(X(V_n))^H$ as $G/H$-spaces.
\begin{lemma}
	Given a $\bar G$-spectrum $\bar X$ and a $K$-separated indexing sequence $\{V_n\}$ the values of \eqref{eq:canpres} for the inflated spectrum are:
	\begin{equation}
		(\inf {\bar G}G\bar X)(V_n) = \susp {V_n-V_n^K}\inf {\bar G}G(\bar X(V_n^K))
	\end{equation}
	where $V_n-V_n^K$ is the orthogonal complement of $V_n^K$.
\end{lemma}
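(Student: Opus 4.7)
The plan is to unwind the definition of the inflation functor on spectra using the well-known factorization
\[
\inf^{\bar G}_G = i_* \circ q^*,
\]
where $q^*$ pulls back a $\bar G$-action along the quotient $q: G \to \bar G$ (producing a $G$-spectrum on which $K$ acts trivially and which is indexed by $\U^K$ regarded as a $K$-trivial $G$-universe), and $i_*$ is the change-of-universe functor induced by the inclusion $i:\U^K \hookrightarrow \U$ into the complete $G$-universe. The hypothesis that $\{V_n\}$ is a $K$-separated indexing sequence is precisely what ensures that $\bar X(V_n^K)$ is defined, since the $V_n^K$ form a strictly increasing indexing sequence in $\U^K$.

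Next I would recall the explicit formula for the change-of-universe functor at the prespectrum level: for any $V \subset \U$, writing $V = V^K \oplus (V - V^K)$,
\[
(i_* Y)(V) = \susp{V - V^K} Y(V^K).
\]
Combining this with the space-level action pullback $q^*$ gives
\[
(\inf^{\bar G}_G \bar X)(V_n) = \susp{V_n - V_n^K}\, \inf^{\bar G}_G(\bar X(V_n^K)),
\]
which is precisely the claim. To compare with the canonical presentation \eqref{eq:canpres}, one may alternatively start from the canonical presentation of $\bar X$ as a $\bar G$-spectrum indexed on $\U^K$, apply $\inf^{\bar G}_G$ using that it preserves homotopy colimits (as a left adjoint) and commutes with both $\sus(-)$ and with $\susp{-V_n^K}$ (the latter because $V_n^K$ is $K$-trivial, hence pulls back from a $\bar G$-representation), and then re-express $\susp{-V_n^K} = \susp{-V_n} \circ \susp{V_n - V_n^K}$ to read off the values at $V_n \in \U$.

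The main obstacle is a technical one: one has to check that the prespectrum-level formula above already delivers a genuine $G$-spectrum whose values match the canonical presentation, and not merely one that agrees after spectrification. This is where the fact that $\bar X$ is genuine in $\U^K$ is used, since then its structure maps are (adjoint to) isomorphisms, and the explicit form of $i_*$ along the orthogonal inclusion $\U^K \hookrightarrow \U$ transports these structure maps to genuine structure maps of $\inf^{\bar G}_G \bar X$. Once this is verified, the statement about the structure maps being induced by those of $\pi_*^{\A(\bar G)}(\bar X)$ will be formal: the inclusions $V_n^K \subset V_{n+1}^K$ assemble into the bonding maps of $\bar X$, and their image under $i_* \circ q^*$ assembles into the bonding maps of $\inf^{\bar G}_G \bar X$, which after passage to $\A$-homology give exactly the structure maps claimed in Proposition \ref{prop:inflatedsystem}.
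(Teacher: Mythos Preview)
Your argument is correct, but it goes about things differently from the paper. The paper reduces immediately to the case of a suspension spectrum $\bar X = \sus \bar S$ via the canonical presentation, and then performs a one-line space-level computation:
\[
(\sus \inflat \bar S)(V_n) = \susp{V_n}\inflat \bar S = \susp{V_n-V_n^K}\susp{V_n^K}\inflat \bar S = \susp{V_n-V_n^K}\inflat(\susp{V_n^K}\bar S) = \susp{V_n-V_n^K}\inflat((\sus\bar S)(V_n^K)),
\]
using only that $V_n^K$ is pulled back from a $\bar G$-representation so that $\susp{V_n^K}$ commutes with $\inflat$. By contrast, you invoke the structural factorization $\inflat = i_*\circ q^*$ and the standard point-set formula $(i_*Y)(V) = \susp{V-V^K}Y(V^K)$ for the change-of-universe functor. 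Your route is slightly more conceptual and works for general $\bar X$ in one pass, while the paper's route is more elementary and self-contained (no appeal to the change-of-universe machinery), at the cost of the extra reduction to suspension spectra. Your worry about spectrification is unnecessary here: the lemma is a statement about the values $(\inflat\bar X)(V_n)$ as $G$-spaces, and the change-of-universe formula holds on the nose at this level; the paper likewise works on the space level.
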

\begin{proof}
	We prove the statement for suspension spectra, and using \eqref{eq:canpres} it holds in general for all spectra. If $\bar S$ is a $\bar G$-space:
	\begin{equation}
		\begin{split}
			(\sus \inflat \bar S)(V_n) &= \susp {V_n}\inflat \bar S = \susp {V_n-V_n^K} \susp {V_n^K}(\inflat \bar S)= \susp {V_n-V_n^K} \inflat (\susp {V_n^K} \bar S) =\\
			&= \susp {V_n-V_n^K} \inflat ((\sus \bar S)(V_n^K)).
		\end{split}
	\end{equation}
	The first equality is the definition of suspension spectrum, the third equality is because $V_n^K$ is a $G/K$-representation and commutes with inflation, while the last one is again the definition of suspension spectrum. 
\end{proof}
\begin{lemma}
	for every $\bar G$-spectrum $\bar X$ we have a weak equivalence of $G/H$-spectra:
	\begin{equation}
		\label{eq:fixpointsinflation}
		\fix H(\inf {\bar{G}}G\bar X)\simeq \inf {\bar{G}/\bar{H}}{G/H}(\fix {\bar{H}}\bar X)
	\end{equation}
\end{lemma}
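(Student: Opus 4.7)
The plan is to verify the equivalence at the level of the canonical presentation \eqref{eq:canpres}, by showing both sides have matching $G/H$-space values on a common indexing sequence, in the same spirit as the preceding lemma. First I would choose an indexing sequence $\{V_n\}$ for $\U$ that is simultaneously $K$-, $H$-, and $L$-separated; this can be arranged by exhausting $\U^K, \U^H, \U^L$ jointly. Three elementary identities will be crucial. Since $V_n^K$ is $G$-stable ($K$ is normal in $G$), the orthogonal decomposition $V_n = V_n^K \oplus (V_n - V_n^K)$ is $G$-equivariant, so $(V_n - V_n^K)^H = V_n^H - V_n^L$ using $L = \gen{H,K}$. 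Moreover $\{V_n^K\}$ is a $\bar H$-separated indexing sequence for the $\bar G$-universe $\U^K$ with $(V_n^K)^{\bar H} = V_n^L$, and $\{V_n^H\}$ is an $L/H$-separated indexing sequence for the $G/H$-universe $\U^H$ with $(V_n^H)^{L/H} = V_n^L$.

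For the LHS, I would apply the preceding lemma to obtain $(\inf {\bar G}G \bar X)(V_n) = \susp {V_n - V_n^K} \inf {\bar G}G (\bar X(V_n^K))$ and then take $H$-fixed points. Using $(S^W \wedge Y)^H = S^{W^H} \wedge Y^H$ together with the observation that the $H$-action on an inflated $\bar G$-space factors through $q|_H : H \to \bar H$ — note $\bar H = q(L) = q(\gen{H,K}) = q(H)$ since $K = \ker q$ — this yields
\[
((\inf {\bar G}G \bar X)(V_n))^H = S^{V_n^H - V_n^L} \wedge \bar X(V_n^K)^{\bar H},
\]
regarded as a $G/H$-space through the surjection $G/H \twoheadrightarrow G/L \cong \bar G/\bar H$.

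For the RHS, the isomorphism $\bar G/\bar H \cong G/L$ identifies $\inf {\bar G/\bar H}{G/H}$ with $\inf {G/L}{G/H}$. Applying the analog of the preceding lemma with the pair $(G/H, L/H)$ in place of $(G, K)$ on the $L/H$-separated sequence $\{V_n^H\}$, and then invoking \eqref{eq:geomfixed} for $\bar X$ on the $\bar H$-separated sequence $\{V_n^K\}$ to rewrite $(\fix {\bar H} \bar X)(V_n^L) = \bar X(V_n^K)^{\bar H}$, one obtains the same expression $S^{V_n^H - V_n^L} \wedge \bar X(V_n^K)^{\bar H}$. Passing to homotopy colimits through \eqref{eq:canpres} therefore delivers the desired equivalence of $G/H$-spectra.

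The main obstacle will be the careful bookkeeping of which subgroup acts via which quotient map, and the relationships between the various fixed-point subspaces $V_n^H$, $V_n^K$, $V_n^L$; once the structural identifications $\bar H = q(H)$ and $\bar G/\bar H \cong G/L$ are in hand and a common separated indexing sequence is fixed, the two canonical presentations coincide term by term and the argument reduces to a direct calculation.
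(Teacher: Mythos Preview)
Your proposal is correct and follows essentially the same approach as the paper: both arguments compare the two sides levelwise on a suitably separated indexing sequence, applying the preceding lemma on each side (once for the pair $(G,K)$ and once for $(G/H,L/H)$) together with \eqref{eq:geomfixed}, and then reduce to the space-level identity $(\inf{\bar G}G(\bar X(V_n^K)))^H \cong \inf{\bar G/\bar H}{G/H}((\bar X(V_n^K))^{\bar H})$. Your choice of a $K$-, $H$-, and $L$-separated sequence is equivalent to the paper's hypothesis that $\{V_n\}$ is $H$- and $K$-separated with $\{V_n^H\}$ $L/H$-separated, since $(V_n^H)^{L/H}=V_n^L$.
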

\begin{proof}
	Denote $L:=\gen {H,K}$ the subgroup generated by $H$ and $K$ in $G$, and notice that the inflation on the left hand side of \eqref{eq:fixpointsinflation} makes sense since $\bar G/\bar H\cong (G/K)/(L/K)\cong G/L\cong (G/H)/(L/H)$. Choose an indexing sequence $\{V_n\}$ for the complete $G$-universe $\U$ that is $H$-separated and $K$-separated, and such that the indexing sequence $\{V_n^H\}$ is $L/H$-separated (It is an indexing sequence for the group $G/H$). We will show that the two spectra \eqref{eq:fixpointsinflation} have isomorphic values on the indexing sequence $\{V_n^H\}$.
	\begin{equation}
		\label{eq:chain1}
		\begin{split}
			(\fix H(\inf {\bar{G}}G\bar X))(V_n^H) &= ((\inflat \bar X)(V_n))^H= (\susp {V_n-V_n^K} (\inflat (\bar X(V_n^K))))^H=  \\
			&= \susp {V_n^H-V_n^L} (\inflat (\bar X(V_n^K)))^H.
		\end{split}
	\end{equation}
	The first isomorphism is \eqref{eq:geomfixed}, while the second one is \eqref{eq:fixpointsinflation}. For the right hand side:
	\begin{equation}
		\label{eq:chain2}
		(\inf {\bar{G}/\bar{H}}{G/H}(\fix {\bar{H}}\bar X))(V_n^H) = \susp {V_n^H-V_n^L} \inf {\bar{G}/\bar{H}}{G/H} ((\fix {\bar H} \bar X)(V_n^L))= \susp {V_n^H-V_n^L} \inf {\bar{G}/\bar{H}}{G/H} ((\bar X(V_n^K))^{\bar H}).
	\end{equation}
	The $G/H$-spaces \eqref{eq:chain1} and \eqref{eq:chain2} are isomorphic since \eqref{eq:fixpointsinflation} is an isomorphism for the $\bar G$-space $\bar X(V_n^K)$:
	\begin{equation*}
		(\inflat (\bar X(V_n^K)))^H = \inf {\bar{G}/\bar{H}}{G/H} ((\bar X(V_n^K))^{\bar H})	
	\end{equation*}
\end{proof}
\begin{proof}[Proof of Proposition \ref{prop:inflatedsystem}]
	Let us first prove the case when $H=\{1\}$ is the trivial subgroup, by \eqref{eq:paistarfunctor} we need to prove the isomorphism:
	\begin{equation}
		\label{eq:trivialcase}
		\pai G(\inflat (\bar X) \wedge  DE\F_+)\cong \O_{\F}\tens {\oef K} \pai {\bar G}(\bar X \wedge DE\F/K_+).
	\end{equation}
	The proof works exactly as the proof of \cite[Lemma 9.2]{john:torusI}. Both sides of \eqref{eq:trivialcase} are homology theories of $\bar X$ (for the right hand side of \eqref{eq:trivialcase}: by \cite[Corollary 5.7]{john:torusI} the ring $\of$ is flat over $\oef K$ so that tensoring with it is an exact functor) and we have a natural transformation of homology theories induced by:
	\begin{equation*}
		[S^0, \bar X\wedge DE\F/K_+]^{\bar G}_* \to [\inf {\bar G}G S^0, \inf {\bar G}G (\bar X\wedge DE\F/K_+)]^G_* \to [S^0, (\inf {\bar G}G\bar X)\wedge \defp]^G_*.
	\end{equation*}
	The first map is induced by the inflation map, and the second one is induced by a map of $G$-spectra 
	$\inf {\bar G}G \defhp K \to \defp$ described in \cite[pag.22]{john:torusI}. We only need to prove that this natural transformation of homology theories is an isomorphism for the various cells of $\bar G$. When $\bar X=S^0$ is the sphere spectrum \eqref{eq:trivialcase} holds since by \cite[Theorem 7.4]{john:torusI}: $\pai G(\defp)=\of$. For more general cells of $\bar G$ the isomorphism  \eqref{eq:trivialcase} follows from the case of the sphere spectrum by the ``Rep($G$)-iso argument'' \cite[Theorem 11.2]{john:torusI} since both homology theories satisfy Thom isomorphism (smashing with $\defp$ gives Thom isomorphism by \cite[Corollary 8.5]{john:torusI}).
	
	For the case when $H$ is an arbitrary connected subgroup of $G$ we need to prove the more general isomorphism:
	\begin{equation}
		\label{eq:generalh}
		\pai {G/H}(\fix H(\inf {\bar G}G \bar X) \wedge \defhp H)\cong \oef H \tens {\oef L} \pai {\bar G/\bar H}(\fix {\bar H}\bar X\wedge \defhp {\bar H}).
	\end{equation}
	We can reduce it to the case of the trivial subgroup since by \eqref{eq:fixpointsinflation} the left hand side of \eqref{eq:generalh} becomes
	\begin{equation*}
		\pai {G/H}(\fix H(\inf {\bar G}G \bar X) \wedge \defhp H)\cong \pai {G/H}(\inf {\bar{G}/\bar{H}}{G/H}(\fix {\bar{H}}\bar X) \wedge \defhp H).
	\end{equation*}
	With this substitution \eqref{eq:generalh} is precisely \eqref{eq:trivialcase} for the new ambient group $G/H$, with inflation along the quotient map $G/H\to G/L$ and the $G/L$-spectrum $\fix {\bar{H}}\bar X$.
\end{proof}

\subsection{Building the map}
We can now return to our case $G=\T^2$ and $\bar G=G/H_1$, and apply proposition \ref{prop:inflatedsystem} to $\ecgbar\in \A(\bar G)$ \eqref{eq:ecgbar} to explicitly obtain
\begin{equation}
	\label{eq:inflated}
	\inflat(\ecgbar) =\left[
	\begin{tikzcd}
		\eginv \of \otimes \overline \kt & \\
		\ehinv 1 \of \tens 1 \nt \arrow[u] & \ehinv i \of \otimes \overline \kt \arrow[ul]  \\
		\of \tens 1 \nt \arrow[u] \arrow[ur] &
	\end{tikzcd}
	\right]
\end{equation}
where in codimension 1 (middle row) the value at $H_1$ (middle row left) is the only one different from the values at all the other connected codimension one subgroups $H_i$ with $i\neq 1$ (middle row right).

We can also explicitly compute $\ecg\in \A(G)$ \cite[Definition 5.2]{barucco:t2} as the kernel of the map $\phi_0$ \cite[Lemma 5.26]{barucco:t2}. 
\begin{definition}
	\label{def:ni2}
	For every connected codimension 1 subgroup $H_i$ of $G$ define $N_i$ to be the kernel of the surjective $\oefh i$-map $\phi_0^i$ \cite[Lemma 5.26]{barucco:t2}:
	\begin{equation}
		\label{eq:exactecghi}
		\begin{tikzcd}
			N_i \arrow[r, tail] & \eghinv i \oefh i \otimes \overline \kg \arrow[r, two heads, "\phi_0^i"] & T_i.
		\end{tikzcd}
	\end{equation}
	Where $\K$ and $T_i$ are respectively defined in \cite[(3.8), (5.18)]{barucco:t2}. 
\end{definition}
Then we can explicitly compute $\ecg$:
\begin{equation}
	\label{eq:ecghom}
	\ecg =\left[
	\begin{tikzcd}
		\eginv \of \otimes \overline \kg\\
		\ehinv i \of \tens i N_i \arrow[u, tail] \\
		\Ker(\phi_0(1)) \arrow[u,tail]
	\end{tikzcd}
	\right]
\end{equation}

\begin{definition}
	\label{def:epsilon}
	Define the map $\epsilon: \inflat \ecgbar \longrightarrow \ecg$ in $\A(G)$ to be the map that at the vertex:
	\begin{equation}
		\label{eq:epsilon}
		\phi^G(\epsilon):=\pi_1^*: \overline \kt \to \overline \kg
	\end{equation}
	is the graded map that has the pullback $\pi_1^*$ in each even degree, where $\pi_1:\X\to \c$ is the projection having $H_1$ as kernel.
\end{definition}
\begin{lemma}
	\label{lem:epsilon}
	The map \eqref{eq:epsilon} extends to a well defined map $\epsilon: \inflat \ecgbar \longrightarrow \ecg$ in $\A(G)$.
\end{lemma}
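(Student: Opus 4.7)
The plan is to construct $\epsilon$ stratum by stratum in the Hasse diagram of $\A(G)$, taking $\pi_1^*\colon\overline\kt\to\overline\kg$ as the sole geometric input and then checking at each stratum that (a) its image lies in the prescribed submodule appearing in \eqref{eq:ecghom} and (b) the resulting squares with the structure maps of \eqref{eq:inflated} and \eqref{eq:ecghom} commute. First, at the vertex the condition is that $\pi_1^*$ respects the pole structure defining the rings: any $f\in\kt$ has poles only at the torsion sets $\c\gen n$, and $\pi_1^{-1}(\c\gen n)=D_{1n}$ is one of the divisors $\dij$ defining $\K$, so $\pi_1^*f\in\K$. Inverting Euler classes on the left of the tensor then yields the vertex-level map of Definition~\ref{def:epsilon}.

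At a codimension-one subgroup $H_i$ with $i\neq 1$, the projections $\pi_1$ and $\pi_i$ are distinct, so $\pi_1^*f$ is regular along every $\dij$; hence $\pi_1^*f\in\bigcap_{j}\O_{\dij}$, which sits inside $N_i=\Ker(\phi_0^i)$. One therefore defines the $H_i$-level map as $\alpha\otimes f\mapsto \alpha\tens i\pi_1^*f$, and the square with the vertex commutes tautologically since both ways around are induced by $\pi_1^*$ followed by the localisation $\overline\kg\hookrightarrow\eginv\of\otimes\overline\kg$.

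The substantive work is at the privileged stratum $H_1$ and at the bottom. Here $\pi_1^*f$ has poles along $D_{1n}$, which is exactly where $\phi_0^1$ detects torsion, so $\pi_1^*$ does not send $\overline\kt$ into $N_1$ outright; instead one shows that it descends to a map of kernel modules $\nt\to N_1$. The key calculation is that the two Cousin descriptions are geometrically compatible: the coordinate $t_n$ along $\c\gen n$ pulls back under $\pi_1$ to a local parameter along $D_{1n}$, so the Euler-class action $c_n\cdot[f]=[\hatn n\cdot f]$ on $\ttt$ is intertwined with the corresponding action in $T_1$. Equivalently $\pi_1^*$ induces a map $\ttt\to T_1$ fitting into a morphism of the two short exact sequences~\eqref{eq:exacts1} and~\eqref{eq:exactecghi}, which forces $\pi_1^*(\nt)\subset N_1$ and simultaneously produces the $H_1$-level map and, via the vertical structure map, the bottom map into $\Ker(\phi_0(1))$. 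Commutativity of every remaining square in the diagram then follows because each piece of $\epsilon$ is induced by the single underlying ring map $\pi_1^*$.

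The main obstacle I anticipate is precisely the comparison of Cousin complexes at $H_1$: one needs to verify that the flabby resolution~\eqref{eq:ccsingle} of $\tp\O_{\c}$ used in the construction of $\ecgbar$ pulls back under $\pi_1$, compatibly with the coordinate choices $t_n$ and the Euler-class actions, to the relevant $H_1$-piece of the Cousin resolution of $\tp\O_{\X}$ underlying $\ecg$. Once this intertwining is made explicit, the inclusion $\pi_1^*(\nt)\subset N_1$ and all remaining structure-map compatibilities are formal consequences.
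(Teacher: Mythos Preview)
Your proposal is correct and follows essentially the same route as the paper: both arguments observe that $\pi_1^*$ lands in $\K$, that for $i\neq 1$ the image is regular along every $\dij$ so that $\oefh i\otimes\im(\pi_1^*)$ is killed by $\phi_0^i$, and that for $i=1$ the coordinate identity $\hat t_{1,j}=\pi_1^*(\hatn j)$ yields a morphism of the short exact sequences \eqref{eq:exacts1} and \eqref{eq:exactecghi}, forcing $\pi_1^*(\nt)\subseteq N_1$; the bottom level then follows componentwise from these. The paper streamlines the presentation slightly by noting up front that the structure maps of $\ecg$ in \eqref{eq:ecghom} are all injective, so the vertex map already determines $\epsilon$ and one only has to check that the image at each level lands in the correct submodule---this is exactly the content of your checks (a) and (b).
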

\begin{proof}
	Notice that the structure maps of $\ecg$ \eqref{eq:ecghom} are all injective, therefore \eqref{eq:epsilon} determines the map $\epsilon$ at each level. We only need to verify that for each connected subgroup the target is the correct one.
	
	For the subgroup $H_1$, the induced map  $\phi^{H_1}(\epsilon):\nt \to N_1$ is the only map that completes the diagram:
	\begin{equation}
		\label{eq:diagramh1}
		\begin{tikzcd}
			\nt \arrow[r, tail] \arrow[d, dashrightarrow, "\phi^{H_1}(\epsilon)"] &
			\eghinv 1\oefh 1 \otimes \overline \kt \arrow[r, twoheadrightarrow, "q"] \arrow[d, "\Id \otimes \pi_1^*"] & 
			\ttt \arrow[d, "\pi_1^*"]\\
			N_1 \arrow[r, rightarrowtail] &
			\eghinv 1\oefh 1 \otimes \overline \kg  \arrow[r, twoheadrightarrow, "\phi_0^1"] &
			T_1
		\end{tikzcd}
	\end{equation}
	where the top row is the exact sequence \eqref{eq:exacts1} whose pullback $\pi_1^*$ gives the bottom row which is the exact sequence \eqref{eq:exactecghi} for the subgroup $H_1$. The right vertical map is induced by $\pi_1^*$ and therefore  the right square of \eqref{eq:diagramh1} commutes since the coordinates $\hat t_{1,j}$ \cite[Definition 3.23]{barucco:t2} used in $\phi_0^1$ satisfy:
	\begin{equation*}
			\hat t_{1,j} =(\pi_1^j)^*(\hatn e)=\pi_1^*\circ [j]^*(\hatn e)=\pi_1^*(\hatn j).
	\end{equation*}
	
	For all the other connected codimension one subgroups $H_i$ with $i\neq 1$ to show the existence of the induced  dotted arrow $\phi^{H_i}(\epsilon)$:
	\begin{equation}
		\label{eq:diagramhi}
		\begin{tikzcd}
			\oefh i\otimes \overline \kt \arrow[r, rightarrowtail] \arrow [d, dashrightarrow, "\phi^{H_i}(\epsilon)"] & 
			\eghinv i \oefh i\otimes \overline \kt  \arrow[d, "\Id \otimes \pi_1^*"] & \\
			N_i \arrow[r, rightarrowtail] &
			\eghinv i \oefh i\otimes \overline \kg  \arrow[r, twoheadrightarrow, "\phi_0^i"] & T_i
		\end{tikzcd}
	\end{equation}
	we only need to verify that the zig-zag from the top left to the bottom right is the zero map, since the bottom row is exact. The map $\phi_0^i$ is zero on every element in $\oefh i \otimes \im(\pi_1^*)$  since in the image of $\pi_1^*$ there are only meromorphic functions with poles at $D_{1j}$ \cite[Definition 2.10]{barucco:t2}, so they are regular on all $D_{ij}$ with $i\neq 1$.
	
	For the trivial subgroup to show the existence of the induced dotted arrow $\phi^1(\epsilon)$:
	\begin{equation}
		\label{eq:diagram1}
		\begin{tikzcd}
			\of \tens 1 \nt \arrow[r, rightarrowtail] \arrow [d, dashrightarrow, "\phi^1(\epsilon)"] & 
			\eginv \of \otimes \overline \kt  \arrow[d, "\Id \otimes \pi_1^*"] & \\
			\Ker(\phi_0(1)) \arrow[r, rightarrowtail] & 
			\eginv \of \otimes \overline \kg  \arrow[r, "\phi_0(1)"] &
			\bigoplus_{i\geq 1}\ehinv i\of \tens i T_i
		\end{tikzcd}
	\end{equation}
	we need to verify that the zig-zag from the top left corner to the bottom right corner is zero. The bottom row of \eqref{eq:diagram1} is the beginning of the trivial subgroup level of the injective resolution \eqref{eq:injres4prime} of $\ecg$. This zig-zag is indeed zero because of the previous diagrams. For the component $i=1$ the map is zero because the same zig-zag in \eqref{eq:diagramh1} is zero. In the same way for all the other components $i\neq 1$ the map is zero because the same zig-zag in \eqref{eq:diagramhi} is zero.
\end{proof}

\section{Proving the H-equivalence}
\label{sec:cpvsplit}
To finish the proof of Theorem \ref{thm:split} we are left to show that for every subgroup $H$ of $G$ such that $H\cap H_1=1$ the map $\epsilon$ (Definition \ref{def:epsilon}) induces an isomorphism
\begin{equation}
	\label{eq:isomsplit}
	[\quotient GH_+, \inflat \ecgbar]^G_* \cong [\quotient GH_+, \ecg]^G_*.
\end{equation}
To do so we will use the Adams spectral sequence \eqref{eq:ass2} for the homology functor $\pi_*^\A$, and show that $\epsilon$ induces an isomorphism of the second page of the two Adams spectral sequences:
\begin{equation}
	\label{eq:assiso}
	\Ext^{*,*}_{\A}(\pi_*^\A(\quotient GH_+),\inflat \ecgbar) \cong  
	\Ext^{*,*}_{\A}(\pi_*^\A(\quotient GH_+),\ecg).
\end{equation}

The first step is to identify the algebraic model for the natural cells $G/H_+$ which is done in \ref{sec:naturalcells}. We turn then in building an injective resolution for $\inflat \ecgbar$ in \ref{sec:injectiveresolutiont} to compute the $\Ext$ groups in Theorem \ref{thm:extgroupiso}. We will also recall the injective resolution of $\ecg$ . 

\subsection{Algebraic model for natural cells}
\label{sec:naturalcells}
We explicitly compute the algebraic model for the natural cells: $\pigh i$ for the connected codimension one subgroups $H_i$ of $G$, and $\pigf$ for the finite subgroups $F$. 
\begin{lemma}
	\label{lem:naturalcells}
	For every $i\geq 1$ the algebraic model $\pigh i$ has values:
	\begin{equation}
		\label{eq:phihi}
		\phi^{H_i}(\pigh i)=\Sigma {\hbg {H_i}}/{e(z_i)}=\Sigma \Q
	\end{equation}
	\begin{equation}
		\label{eq:phi1f}
		\phi^1(\pigh i) =\bigoplus_{F \subseteq H_i} \Sigma {\hbg F}/{e(z_i)}
	\end{equation}
	while $\phi^{G}(\pigh i)=\phi^{H_j}(\pigh i)=0$ for $j\neq i$. The structure maps are the ones induced by the suspended sphere $\pai \A(S^{z_i})$ \eqref{eq:structuremaps}. Here $e(z_i)$ is the Euler class \eqref{eq:eulerclass} of a character $z_i$ of $G$ having $H_i$ as kernel.
\end{lemma}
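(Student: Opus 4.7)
The plan is to apply the homological functor $\pi_*^\A$ to the standard cofibration of $G$-spectra
\[
G/H_{i+} \cong S(z_i)_+ \longrightarrow S^0 \xrightarrow{a(z_i)} S^{z_i},
\]
in which the second map is inclusion of the two-point sphere, and to read off $\pigh i$ from the resulting long exact sequence in $\A(G)$. The crucial input is the standard description of representation spheres in the algebraic model for tori: for any complex $G$-representation $W$ and connected subgroup $K\subseteq G$, one has $\phi^K(\pi_*^\A(S^W)) \cong \Sigma^{|W^K|}\,\phi^K(\pi_*^\A(S^0))$, and the map $a(W)_*$ is multiplication by the Euler class $e(W^K)$ (understood as $1$ when $W^K=0$). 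At the bottom vertex $K=1$, the $\of$-module $\phi^1(\pi_*^\A(S^W))$ decomposes further into $F$-components over finite subgroups $F\subseteq G$, with shift $|W^F|$ and the same Euler-class description component-wise.

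Specialising to $W=z_i$, I would compute $(z_i)^K$ at each connected $K$. At $K=G$ and at $K=H_j$ for $j\neq i$, $K\not\subseteq H_i=\ker z_i$, so $(z_i)^K=0$ and $a(z_i)_*$ is an isomorphism, yielding $\phi^K(\pigh i)=0$. At $K=H_i$, $(z_i)^{H_i}=z_i$ is one-dimensional, so $a(z_i)_*$ is multiplication by the polynomial generator $e(z_i)=c_i$ of $\hbg{H_i}=\Q[c_i]$; this is injective with cokernel $\Sigma^2\,\hbg{H_i}/e(z_i)=\Sigma^2\Q$. At $K=1$, working component-wise: for $F\subseteq H_i$ the class $e(z_i)\in\hbg F\cong\Q[\xa,\xb]$ is a nonzero linear polynomial, giving an injection with cokernel $\hbg F/e(z_i)$; for $F\not\subseteq H_i$, $(z_i)^F=0$ and $a(z_i)_*$ is an isomorphism on that $F$-component, contributing nothing.

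Since $a(z_i)_*$ is injective at every vertex, the long exact sequence collapses to a short exact sequence
\[
0 \longrightarrow \pi_*^\A(S^0) \longrightarrow \pi_*^\A(S^{z_i}) \longrightarrow Q \longrightarrow 0
\]
in $\A(G)$, whence $\pigh i\cong\Sigma^{-1}Q$. Composing the $\Sigma^{-1}$ shift from the connecting map with the $\Sigma^2$ Thom shift present in $Q$ produces the single $\Sigma$ in the stated formulas \eqref{eq:phihi} and \eqref{eq:phi1f}. The structure maps of $\pigh i$ are inherited from those of the quotient $Q$ of $\pi_*^\A(S^{z_i})$, matching the final assertion of the lemma. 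The main technical subtlety lies at the bottom vertex $K=1$: verifying the $F$-component description of $\phi^1(\pi_*^\A(S^{z_i}))$ and that $a(z_i)_*$ is an isomorphism on the $F\not\subseteq H_i$ components relies on the standard Euler-class-inversion features built into the ring $\of$, so that only finite subgroups contained in $H_i$ appear in the direct sum.
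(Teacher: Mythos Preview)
Your proposal is correct and follows essentially the same argument as the paper: both apply $\pi_*^{\A}$ to the cofibre sequence $G/H_{i+}\to S^0\to S^{z_i}$ and read off the answer from the injectivity of multiplication by the Euler class at each connected subgroup. One minor imprecision worth tightening: at the vertex $H_i$ the module $\phi^{H_i}(\pi_*^{\A}(S^0))$ is the full product $\oefh i=\prod_{j\geq 1}\hbg{H_i^j}$ rather than the single ring $\hbg{H_i}$, and the Euler class is $c_i$ only in the $j=1$ component (a unit in all other components), though this does not affect the cokernel.
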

\begin{proof}
	We start from the cofibre sequence of $G$-spaces:
	\begin{equation}
		\gh i\longrightarrow S^0 \xrightarrow{e(z_i)} S^{z_i}.
	\end{equation}
	Applying the suspension functor we obtain a cofibre sequence in $G$-spectra, which induces a long exact sequence for the homology functor $\pai \A$:
	\begin{equation}
		\label{eq:zihom}
		\pigh i \longrightarrow \pai \A(S^0) \xrightarrow{e(z_i)} \pai \A(S^{z_i}).
	\end{equation}
	
	By \eqref{eq:sphere} the induced map $e(z_i)$ in \eqref{eq:zihom} at the levels of the subgroups $G$ and $H_j$ with $j\neq i$ is the identity since $z_i^G=z_i^{H_j}=0$, therefore we obtain zero as kernel and cokernel at those levels. At the level of the subgroup $H_i$ since $z_i^{H_i}=z_i$ the map $e(z_i)$ is induced by the map
	\begin{equation}
		\label{eq:phihi1}
		\phi^{H_i} (e(z_i)): \oefh i \to \susp {z_i}\oefh i
	\end{equation}
	that sends the unit $\iota\in \oefh i$ to precisely the Euler class $e(z_i^{H_i})\in \oefh i$ as defined in \eqref{eq:eulerclass}. Using the coordinate $c_i$ \eqref{eq:cij}, the Euler class $e(z_i^{H_i})$ has $c_i$ in the $H_i$-th component and $1$ in all the other components. Therefore the map \eqref{eq:phihi1} is injective so it has no kernel, while the cokernel is $\susp 2 \Q[c_i]/(c_i)$, which is the only contribution to \eqref{eq:phihi}.
	
	At the bottom level we can apply the same argument, the map:
	\begin{equation}
		\label{eq:phi1f1}
		\phi^{1} (e(z_i)): \of \to \susp {z_i}\of
	\end{equation}
	sends the unit $\iota \in \of$ to precisely the Euler class $e(z_i)\in \of$, which by \eqref{eq:eulerclass} and \eqref{eq:xi} has components:
	\begin{equation}
		\label{eq:ezi}
		e(z_i)_F=
		\begin{cases*}
			1 & if $F\nsubseteq H_i$ \\
			x_i & if $F\subseteq H_i$
		\end{cases*}
	\end{equation}
	Therefore the map \eqref{eq:phi1f1} is injective and the cokernel is precisely the suspension of \eqref{eq:phi1f}.
\end{proof}
\begin{lemma}
	\label{lem:gf}
	For every finite subgroup $F$ of $G$ the algebraic model $\pigf$ has a zero at each subgroup level except at the bottom level \eqref{eq:valuet2} where it has the value:
	\begin{equation}
		\label{eq:pigf}
		\pigf(1)=\bigoplus_{ F' \subseteq F} \Sigma^2\hbg {F'}/(\xa, \xb)= \bigoplus_{ F' \subseteq F} \Sigma^2 \Q
	\end{equation}
	the suspended rationalized Burnside ring of $F$. We are using the coordinates $\hbg {F'}\cong \Q[x_A,x_B]$ \eqref{eq:xaxb}. 
\end{lemma}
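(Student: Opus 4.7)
The plan is to mimic the cofibre-sequence argument of Lemma \ref{lem:naturalcells}, iterated one step further so that two Euler classes (instead of one) carve out the finite subgroup $F$ inside $G$. The statement decomposes into two independent pieces: vanishing at every positive-dimensional connected level, and the explicit description at the bottom level.

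The vanishing at $K \in \{G\}\cup \{H_i\}_{i\ge 1}$ is purely geometric and immediate: any nontrivial connected subgroup $K$ has positive dimension, so it cannot be contained in the finite group $F$. Consequently $(G/F)^K = \emptyset$, the geometric fixed-point spectrum $\fix K(G/F_+)$ is trivial, and hence $\phi^K(\pigf) = 0$.

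For the bottom level the key geometric step is to exhibit $G/F_+$ as a smash product of two codimension-one cells. Since the sublattice $F^\perp \subset G^*$ of characters vanishing on $F$ has rank $2$, I would pick any $\Z$-basis $(z_a, z_b)$ of $F^\perp$ and set $A := \Ker(z_a)$, $B := \Ker(z_b)$; these are codimension-one subgroups of $G$ (generally of the form $H_i^{n_i}$) satisfying $A \cap B = F$. The $G$-equivariant map $gF \mapsto (gA, gB)$ is a diffeomorphism $G/F \xrightarrow{\cong} G/A \times G/B$, injective because $A \cap B = F$ and surjective because $A\cdot B = G$, so
\[
G/F_+ \;\simeq\; G/A_+ \wedge G/B_+
\]
as pointed $G$-spaces. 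Smashing the cofibre sequence $G/A_+ \to S^0 \xrightarrow{e(z_a)} S^{z_a}$ with $G/B_+$ then yields the cofibre sequence
\[
G/F_+ \longrightarrow G/B_+ \xrightarrow{\,\Id \wedge e(z_a)\,} \Sigma^{z_a}\, G/B_+,
\]
to which I would apply $\pai\A$ to reduce the problem to the kernel and cokernel of multiplication by $e(z_a)$ on the bottom level of $\pi_*^\A(G/B_+)$.

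The required input $\pi_*^\A(G/B_+)(1) = \bigoplus_{F'\subseteq B} \Sigma \,\hbg{F'}/(e(z_b))$ is the verbatim analogue of Lemma \ref{lem:naturalcells} for the (possibly disconnected) codimension-one subgroup $B$, with an identical proof using a non-primitive character of kernel $B$. By formula \eqref{eq:ezi} multiplication by $e(z_a)$ acts componentwise: it is a unit on every summand with $F' \not\subseteq A$, killing those contributions, while for $F' \subseteq A \cap B = F$ the two classes $e(z_a)$ and $e(z_b)$ form a $\Q$-basis of the two-dimensional space of linear forms in $\hbg{F'} = \Q[\xa, \xb]$ and therefore generate its maximal ideal $(\xa, \xb)$. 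Consequently $e(z_a)$ is a nonzerodivisor modulo $e(z_b)$, the kernel vanishes, and the cokernel at $F'$ is $\hbg{F'}/(\xa, \xb) = \Q$. Summing over the surviving subgroups and accounting for the two suspension shifts from the chained cofibre sequences delivers the claimed $\bigoplus_{F'\subseteq F} \Sigma^2 \,\hbg{F'}/(\xa, \xb)$. The main obstacle I anticipate is recording the disconnected-kernel extension of Lemma \ref{lem:naturalcells}, which is routine but must be stated explicitly; beyond that, everything reduces to linear algebra in the character lattice of $G$.
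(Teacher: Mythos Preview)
Your proof is correct, but it takes a genuinely different route from the paper's.

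For the vanishing at positive-dimensional connected levels you agree with the paper verbatim. For the bottom level, however, the paper does \emph{not} iterate the cofibre-sequence argument of Lemma~\ref{lem:naturalcells}. Instead it computes $\pi_*^G(DE\F_+\wedge G/F_+)$ directly from stable homotopy: first it strips off the $DE\F_+$ using strong dualizability $D(G/F_+)\simeq S^{-L(F)}\wedge G/F_+$ together with the isotropy-separation equivalence $E\F_+\wedge G/F_+\simeq G/F_+$; then it applies the restriction--coinduction adjunction to identify $[S^0,G/F_+]^G_*\cong \Sigma^2[S^0,S^0]^F_*=\Sigma^2 A(F)$, the suspended rationalized Burnside ring. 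The componentwise description $\bigoplus_{F'\subseteq F}\Sigma^2\Q$ then drops out of the standard idempotent splitting of $A(F)\otimes\Q$.

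Your approach is more elementary and keeps the argument parallel to Lemma~\ref{lem:naturalcells}: the decomposition $G/F_+\simeq G/A_+\wedge G/B_+$ and the two chained Euler-class cofibre sequences produce the $\of$-module structure component by component, and the linear independence of $e(z_a),e(z_b)$ in $H^2(BG/F')\cong G^*\otimes\Q$ is exactly what forces the quotient to be $\Q$. The cost is that you must record the disconnected-kernel extension of Lemma~\ref{lem:naturalcells} (routine, as you note) and verify $A\cap B=F$, $A\cdot B=G$ from the character-lattice picture. The paper's argument avoids this bookkeeping and instead invokes duality and adjunction, which is shorter but less explicit about the $\of$-module structure. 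Both reach the same endpoint; yours makes the two lemmas feel like a single inductive computation, while the paper's connects the answer directly to the Burnside ring.
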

\begin{proof}
	In this case we use directly the definition of the homology functor $\pai \A$ \eqref{eq:paistarfunctor}. If $H$ is a connected subgroup of $G$ different from the trivial one, then $\phi^H(\pigf)=0$, since $\fix H(\gf)=0$ because the $H$-fixed points of the $G$-space $\gf$ is only the basepoint.
	
	At the bottom level we have the rationalized Burnside ring of $F$ \cite[Definition 1.3.5]{barnes:rational}:
	\begin{equation}
		\label{eq:burnsidering}
		\pai G(\defp \wedge G/F_+)\cong \pai G(G/F_+)\cong \susp 2A(F)\cong \bigoplus_{F'\subseteq F}\susp 2\Q.
	\end{equation}
	For the first isomorphism of \eqref{eq:burnsidering} we use that natural cells are strongly dualizable, more precisely by \cite[(4.16)]{johnmay:equivariant}:
	\begin{equation}
		D(\gf)=F(\gf, S^0)\simeq S^{-L(F)}\wedge \gf
	\end{equation}
	where $L(F)$ is the tangent $F$-representation at the identity coset $G/F$ and $D(G/F_+)$ is the dual of $G/F_+$. Therefore:
	\begin{equation}
		\begin{split}
			\pai G(\defp \wedge \gf) &\cong \pai G (\defp \wedge S^{L(F)} \wedge D(\gf))\\  
			&\cong \pai G(S^{L(F)}\wedge D(\efp \wedge \gf)) \\
			&\cong \pai G (S^{L(F)} \wedge D(\gf))\cong \pai G(\gf).
		\end{split}
	\end{equation}
	The weak equivalence $\efp \wedge \gf \simeq \gf$ can be obtained from the isotropy separation cofibre sequence of $\gf$:
	\begin{equation}
		\efp \wedge \gf \to \gf \to \etf \wedge \gf
	\end{equation}
	noticing the last term is nullhomotopic since $\gf$ has stable isotropy in $\F$.
	The second isomorphism of \eqref{eq:burnsidering} can be obtained as follows, using the restriction-coinduction adjunction:
	\begin{equation*}
		\begin{split}
			[S^0,\gf]^G &\cong [S^0,S^{L(F)} \wedge F(\gf,S^0)]^G \\
			&\cong [S^{-L(F)}, F_F(G_+,S^0)]^G \\
			&\cong [\res FS^{-L(F)}, S^0]^F\cong \Sigma^2[S^0,S^0]^F=\susp 2 A(F)
		\end{split}
	\end{equation*}
\end{proof}

\subsection{Injective resolution of $\inflat \ecgbar$}
\label{sec:injectiveresolutiont}
We build an injective resolution of $\inflat \ecgbar$ \eqref{eq:inflated} in the abelian category $\A(G)$:
\begin{equation}
	\label{eq:infres}
	\begin{tikzcd}
		0 \arrow[r] &\inflat \ecgbar \arrow[r] & \inj_0' \arrow[r, "\phi_0'"] & \inj_1' \arrow[r, "\phi_1'"] &  \inj_2' \arrow[r] & 0.
	\end{tikzcd}
\end{equation}
We first rewrite $\inflat \ecgbar$ in a more convenient form:
\begin{equation}
	\label{eq:ectinfl}
	\inflat \ecgbar =\left[
	\begin{tikzcd}
		\eginv \of \tens 1 \nt \\
		\ehinv i \of \tens 1 \nt \arrow[u, rightarrowtail] \\
		\of \tens 1 \nt \arrow[u, rightarrowtail]
	\end{tikzcd}
	\right]
\end{equation}
where at each level we are tensoring the $\of$-module of $\pi_*^\A(S^0)$ (example \ref{ex:modelsphere}) with the nub $\nt$ \eqref{eq:ecgbar} of $\ecgbar$:

\begin{rem}
	\label{rem:changering}
	To obtain \eqref{eq:ectinfl} from \eqref{eq:inflated} simply notice that when $i\neq 1$ we have $\egh 1 \subseteq \eh i \subseteq \eg$ and therefore:
	\[
	\ehinv i \of \tens 1 \nt \cong \ehinv i \of \tens 1 \eghinv 1 \nt \cong \ehinv i \of \tens 1 (\eghinv 1 \oefh 1 \otimes \K_\T) \cong \ehinv i \of \otimes \K_\T.
	\]
	The same is true for $\eginv \of \tens 1 \nt \cong \eginv \of \otimes \K_\T$. 
\end{rem}

Notice that for the family of all subgroups of $G$ the universal space is precisely $E[All]_+\simeq S^0$. Therefore combining \cite[12.3]{john:torusI} and \cite[10.2]{john:torusI} we obtain the injective resolution of $S^0$ in $\A(G)$: 
\begin{equation*}
	S^0 \longrightarrow f_G(\Q) \longrightarrow \bigoplus_{i\geq 1} f_{H_i}(\bigoplus_{j\geq 1} \Sigma^2 \hombg {H_i^j}) \longrightarrow  f_1(\bigoplus_{F\in \F} \Sigma^4 \hombg F)\longrightarrow 0.
\end{equation*}
We can rewrite this last sequence in a more convenient form for us:
\begin{equation}
	\label{eq:s0res}
	S^0 \longrightarrow f_G(\Q) \longrightarrow \bigoplus_{i\geq 1} f_{H_i}(\frac {\eghinv i \oefh i}{\oefh i}) \longrightarrow  f_1(M)\longrightarrow 0,
\end{equation}
where $M$ is the $\of$-module:
\begin{equation}
	\label{eq:m1}
	M:= \quotient {(\bigoplus_{i\geq 1}\frac {\eginv \of}{\ehinv i\of})}{\Delta}
\end{equation}
and $\Delta$ is the diagonal submodule: the image of the map 
\[
\eginv \of \longrightarrow \bigoplus_{i\geq 1}\frac {\eginv \of}{\ehinv i\of}.
\]
Since $\nt$ is flat over $\oefh 1$  \cite[Lemma 5.3]{john:elliptic}, we can simply tensor over this ring every $\of$-module of \eqref{eq:s0res} to obtain an injective resolution of $\inflat \ecgbar$ in $\A(G)$. More precisely the terms of the injective resolution \eqref{eq:infres} are:
\begin{equation}
	\label{eq:injectivesprime}
	\begin{split}
		\inj_0' &=f_G(\overline \kt)\\
		\inj_1' &= f_{H_1}(\ttt) \bigoplus_{i>1}f_{H_i}(\frac {\eghinv i \oefh i}{\oefh i} \otimes \overline \kt)\\
		\inj_2' &= f_1(M\tens 1 \nt),
	\end{split}
\end{equation}
where $M$ is defined in \eqref{eq:m1}, $\nt$ in \eqref{eq:exacts1}, and we have used Remark \ref{rem:changering}. Notice we obtain $f_{H_1}(\ttt)$ in $\inj_1'$ since by \cite[Proposition 3.1]{john:torusII} the module $\ehinv 1\of$ is flat over $\oefh 1$. Moreover $\inj_1'$ is indeed injective by \ref{cor:inj1} and \ref{cor:inj2} since $\ttt$ is a torsion injective $\oefh 1$-module and
\[
\Sigma^2 \hombg {H_i^j}  \otimes \overline \kt
\]
is a torsion injective $H^*(B\quotient G{H_i^j})$-module.

\subsection{Injective resolution of $\ecg$}
\label{sec:injectiveresolutiong}
For convenience we recall here the injective resolution of $\ecg$ in $\A(G)$ built in \cite[Lemma 5.3]{barucco:t2}.
\begin{equation}
	\label{eq:injres4prime}
	\begin{tikzcd}
		0 \arrow[r] &\ecg \arrow[r] & \inj_0 \arrow[r, "\phi_0"] & \inj_1 \arrow[r, "\phi_1"] & \inj_2 \arrow[r, "0"] & 0
	\end{tikzcd}
\end{equation}
where the terms are:
\begin{equation}
	\label{eq:injectives}
	\begin{split}
		\inj_0 &:=f_G(\overline \kg)\\
		\inj_1 &:= \bigoplus_{i\geq 1}f_{H_i}(T_i)\\
		\inj_2 &:=f_1(N),
	\end{split}
\end{equation}
with
\begin{equation}
	\label{eq:net}
	\begin{split}
		N &:= \bigoplus_{F}\overline {\h 2F}\\
		T_i &:= \bigoplus_{j\geq 1} \overline {\K/\odij}
	\end{split}
\end{equation}
as defined in \cite[(5.18), (5.23)]{barucco:t2}, with all local cohomology modules coming from the Cousin complex of the structure sheaf $\tp {\O_{\X}}$ \cite[(4.33)]{barucco:t2}. 

\subsection{Computing the Ext groups}
We are finally ready to finish the proof of Theorem \ref{thm:split}, namely to show the isomorphism of the $\Ext$ groups in the Adams spectral sequences \eqref{eq:isomsplit}. To do so we will need some technical results that we prove after the main Theorem, and that we use in the proof.

\begin{theorem}
	\label{thm:extgroupiso}
	For every subgroup $H$ of $G$ such that $H\cap H_1=\{1\}$ the map $\epsilon$ \eqref{eq:epsilon} induces an isomorphism between the terms in the Adams spectral sequence:
	\begin{equation}
		\Ext_{\A}^{*,*}(\quotient GH_+, \inflat(\ecgbar))\cong \Ext_{\A}^{*,*}(\quotient GH_+, \ecg)
	\end{equation}
\end{theorem}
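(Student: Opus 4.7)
The strategy is to compute both sides as the cohomology of the Hom-complexes obtained by applying $\Hom_{\A}(\pi_*^{\A}(\quotient GH_+), -)$ to the two injective resolutions \eqref{eq:infres} and \eqref{eq:injres4prime}, and then to show that the cochain map induced by $\epsilon$ is a quasi-isomorphism. Since each resolution has length two, the Hom-complex in each case is a three-term complex whose cohomology in degrees $0,1,2$ computes the desired Ext groups; it therefore suffices to identify the three components of the comparison map and to verify term-by-term that they induce isomorphisms on cohomology.

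I would split the analysis according to the shape of $H$, guided by the computations of $\pi_*^{\A}(\quotient GH_+)$ in Lemmas \ref{lem:naturalcells} and \ref{lem:gf}. The hypothesis $H \cap H_1 = \{1\}$ excludes $H = G$, $H = H_1$ and any disconnected codimension one subgroup with identity component $H_1$, leaving three essential families: (a) the trivial subgroup, (b) a finite subgroup $F$ with $F \cap H_1 = \{1\}$, and (c) a codimension one subgroup whose identity component is $H_i$ with $i \neq 1$ and $H_i \cap H_1 = \{1\}$. In each case the algebraic model of $\quotient GH_+$ is concentrated at only one or two vertices of $\A(G)$, so the contribution $\Hom(\pi_*^\A(\quotient GH_+), f_K(U))$ vanishes whenever $K$ does not match the support of the cell. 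This collapses each Hom-complex to a short concrete expression involving modules over $\of$, $\oefh i$ and $\hbg F$.

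After this reduction, $\epsilon$ acts at the top vertex by $\pi_1^{*}: \overline \kt \to \overline \kg$, at $H_1$ by the pullback $\ttt \to T_1$ induced by $\pi_1^{*}$, and at the remaining codimension one vertices and at the trivial vertex by the induced maps constructed in the proof of Lemma \ref{lem:epsilon}. The crucial geometric input is that, when $H \cap H_1 = \{1\}$, the projection $\pi_1 : \X \to \c$ restricts to an isomorphism from $\XF(H)$ onto its image in $\c$, so that pullback along $\pi_1$ identifies the local data of the $G$-theory along $\XF(H)$ with the corresponding local data of the $\bar G$-theory on $\c$. For cases (a) and (c) this realises the induced map on Hom-complexes as a term-by-term isomorphism, hence a quasi-isomorphism.

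The main obstacle is the bottom-level contribution, especially in case (b): the $\Hom$ into $\inj_2 = f_1(N)$ on the right and $\inj_2' = f_1(M \tens 1 \nt)$ on the left. Here one must match the local cohomology summands $\overline{H^2_{F'}}$ of $N$ with the corresponding summands of $M \tens 1 \nt$ for those $F' \subseteq F$ with $F' \cap H_1 = \{1\}$, using that the Cousin complex of $\tp{\O_{\X}}$ is built from the divisors $D_{ij}$ which are transverse to $H_1$ at such $F'$, and that flat base change along $\pi_1$ reduces it to the Cousin complex \eqref{eq:ccsingle} of $\tp{\O_{\c}}$ at the image torsion point. Once this transversality step is executed, the three-term Hom-complex map is an isomorphism in every degree, yielding the desired isomorphism of Ext groups and completing the comparison of the $E_2$-pages of the two Adams spectral sequences.
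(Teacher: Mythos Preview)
Your approach is the paper's: apply $\Hom_{\A}(\pi_*^\A(\quotient GH_+),-)$ to the two length-two resolutions and show that the chain map induced by $\epsilon$ is a term-by-term isomorphism, with your geometric heuristic (that $\pi_1$ restricts to an isomorphism on $\XF(H)$ when $H\cap H_1=\{1\}$) made precise as Lemma~\ref{lem:previousarticle}.

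The one point to adjust is your placement of the difficulty. The bottom-level comparison $\Hom(-,\inj_2')\to\Hom(-,\inj_2)$ is the substantive step in the codimension-one case just as much as in the finite case, not only in your case~(b). For $H=H_2$ the paper must still compute $\Hom_{\hbg F}(\Sigma\hbg F/(x_2),(M\tens 1\nt)_F)$ against its counterpart for $N_F$, and this requires the normal form Lemma~\ref{lem:comfyform} for elements of $(M\tens 1\nt)_F$ together with the Cousin-complex description of $N_F$; only then does $\epsilon_2$ become visibly the pullback isomorphism $\pi_1^*:\kt/\O_{\c\gen{n_1}}\to(\O_{D_2}/m_2)/(\O_F/m_2)$. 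For finite $H$ the paper first reduces to $H\subseteq H_2$ cyclic via Lemma~\ref{lem:finitesub} (a step your outline omits) and then runs the same computation one torsion level deeper. Your sketch is sound once these reductions and normal-form computations are supplied, but as written it suggests cases~(a) and~(c) follow by inspection, which they do not.
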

\begin{proof}
	First recall the injective resolutions of $\inflat \ecgbar$ \eqref{eq:infres}, and $\ecg$ \eqref{eq:injres4prime}, as well as the algebraic model for the natural cells (Lemma \ref{lem:naturalcells}, and Lemma \ref{lem:gf}).
	
	It is enough to show that for every such $H$, in the following commutative diagram (obtained taking morphisms into the injective resolutions):
	\begin{equation}
		\label{eq:commdiaghom}
		\begin{tikzcd}
			\Hom_{\A}(G/H_+,\inj_0') \arrow[r] \arrow[d] &
			\Hom_{\A}(G/H_+,\inj_1') \arrow[r,"\delta_1'"] \arrow[d, "\epsilon_1"] &
			\Hom_{\A}(G/H_+,\inj_2') \arrow[r] \arrow[d, "\epsilon_2"] &
			0\\
			\Hom_{\A}(G/H_+,\inj_0)  \arrow[r] &
			\Hom_{\A}(G/H_+,\inj_1)  \arrow[r,  "\delta_1"] &
			\Hom_{\A}(G/H_+,\inj_2)  \arrow[r] & 
			0
		\end{tikzcd}
	\end{equation}
	the vertical maps (induced by $\epsilon$) are all isomorphisms. We will first show this when $H$ has codimension 1, and then when $H$ is finite. Recall the definition of the injective  objects in the first row \eqref{eq:injectivesprime}, and in the second one \eqref{eq:injectives}. 
	
	If $H$ has codimension 1, then necessarily $H$ is connected and without loss of generality we can suppose $H=H_2$. The first column of \eqref{eq:commdiaghom} is zero since by the adjunction \eqref{eq:adjunction}:
	\begin{equation*}
		\Hom_{\A}(\gh 2, f_G(\overline{\K_\T}))\cong \Hom_{\Q}(0, \overline{\K_\T})=0
	\end{equation*}
	and with the same argument: $\Hom_{\A}(\gh 2, f_G(\overline{\K}))=0$.
	
	Moving to the second column, by Lemma \ref{lem:naturalcells} the object $\gh 2$ has a zero at each subgroup $H_s$ with $s\neq 2$. Therefore the only summands of $\inj_1'$ and $\inj_1$ that do contribute are the ones constant below $H_2$, using the coordinate $\hbg {H_2}\cong \Q[c_2]$ \eqref{eq:cij} by the adjunction \eqref{eq:adjunction} we obtain:
	\begin{equation*}
		\begin{split}
			\Hom_{\A}(\gh 2,\inj_1') &\cong \Hom_{\A}(\gh 2, f_{H_2}(\frac {\eghinv 2 \oefh 2}{\oefh 2}\otimes \overline{\K_\T})) \\
			&\cong \Hom_{\Q[c_2]}(\Sigma \Q, \quotient {\Q[c_2^{\pm1}]}{\Q[c_2]}\otimes \overline{\K_\T})\cong \Sigma \overline{\K_\T}.
		\end{split}
	\end{equation*}
	Explicitly the morphism $f\in \overline{\K_\T}$ is the one that sends the unit to the element $[c_2^{-1}]\otimes f$. In the same way:
	\begin{equation*}
		\begin{split}
			\Hom_{\A}(\gh 2,\inj_1) &\cong \Hom_{\A}(\gh 2, f_{H_2}(T_2)) \\
			&\cong \Hom_{\Q[c_2]}(\Sigma \Q, \overline{\quotient {\K}{\O_{D_2}}})\cong \Sigma \overline{\quotient {\O_{D_2}}{m_2}}
		\end{split}
	\end{equation*}
	where $m_2$ is the ideal of $\O_{D_2}$ of those functions vanishing at $D_2$ and explicitly the morphism $[f]\in \overline{\quotient {\O_{D_2}}{m_2}}$ is the one that sends the unit to the element $[\hat t_2^{-1}f]\in \overline{\quotient {\K}{\O_{D_2}}}$. 
	
	By Lemma \ref{lem:previousarticle} the map $\epsilon_1$ \eqref{eq:commdiaghom} is an isomorphism, since in each even degree it is the map:
	\[
	\pi_1^*:\kt \xrightarrow{\cong} \quotient {\O_{D_2}}{m_2}.
	\]
	
	Moving to the third column of \eqref{eq:commdiaghom}, using  the adjunction \eqref{eq:adjunction}:
	\begin{equation}
		\label{eq:homgh2i2}
		\begin{split}
			\Hom_{\A}(\gh 2,\inj_2') &\cong \Hom_{\A}(\gh 2, f_{1}(M\tens 1 \nt)) \\
			&\cong \Hom_{\of}(\bigoplus_{F \subseteq H_2}\Sigma  {\hbg F}/{(x_2)}, M\tens 1 \nt)\\
			&\cong \bigoplus_{F \subseteq H_2}\Hom_{\hbg F}(\Sigma  {\hbg F}/{(x_2)}, (M\tens 1 \nt)_F)\\
			&\cong \bigoplus_{n_1\geq 1} \Sigma \overline{\quotient {\kt}{\O_{\c\gen {n_1}}}}. 
		\end{split}
	\end{equation}
	where $M$ is defined in \eqref{eq:m1}, $\nt$ in \eqref{eq:exacts1} and $x_2$ in \eqref{eq:xi}. The last isomorphism of \eqref{eq:homgh2i2} is obtained as follows. First for every $n_1\geq 1$ we have the finite cyclic subgroup $F=C_{n_1}=H_1^{n_1}\cap H_2\subseteq H_2$. By Lemma \ref{lem:comfyform} an element in the $F$-th component of the third row of \eqref{eq:homgh2i2} is a map sending the unit of $\Sigma  {\hbg F}/{(x_2)}$ to a sum of elements of the form \eqref{eq:comfyelements} with $h=1$. Therefore the $F$-th component of the map $\delta_1'$ \eqref{eq:commdiaghom}:
	\begin{equation}
		\label{eq:delta1f}
		(\delta_1')_F:\Sigma \overline{\K_\T} \to \Hom_{\hbg F}(\Sigma  {\hbg F}/{(x_2)}, (M\tens 1 \nt)_F)
	\end{equation}
	is surjective with kernel $\Sigma \overline{\O_{\c\gen {n_1}}}$, since \eqref{eq:delta1f} sends $f$ to the map which sends the unit to the element
	\[
	[(0, [x_1^{-k}x_2^{-1}], 0, \dots)]\tens {\Q[x_1]}(x_1^k\otimes f)\in (M\tens 1 \nt)_F
	\]
	with $ft_{n_1}^k$ regular on $\c\gen {n_1}$. Therefore $(\delta_1')_F$ induces the last isomorphism of \eqref{eq:homgh2i2}. In the same way:
	\begin{equation}
		\label{eq:gh2i2}
		\begin{split}
			\Hom_{\A}(\gh 2,\inj_2) &\cong \bigoplus_{F \subseteq H_2}\Hom_{\hbg F}(\Sigma  {\hbg F}/{(x_2)}, N_F)\\ 
			&\cong \bigoplus_{F \subseteq H_2} \Sigma \overline{\quotient {({\O_{D_2}}/{m_2})}{({\O_{F}}/{m_2})}}.
		\end{split}
	\end{equation}
	where by \cite[(4.32)]{barucco:t2} the $F$-th component of $N$ \eqref{eq:net} can be described as:
	\begin{equation}
		\label{eq:nf}
		N_F= \overline {\h 2F}\cong\overline{\bigoplus_{i\geq 1}(\quotient {\kg}{\O_{\din i}})/\K},
	\end{equation}
	and $\O_F$ is defined in \cite[(3.10)]{barucco:t2}. The last isomorphism of \eqref{eq:gh2i2} is obtained as before. 
	The $F$-th component of the map $\delta_1$ \eqref{eq:gh2i2}:
	\begin{equation}
		\label{eq:delta1f2}
		(\delta_1)_F: \Sigma \overline{\quotient {\O_{D_2}}{m_2}} \to \Hom_{\hbg F}(\Sigma  {\hbg F}/{(x_2)}, N_F)
	\end{equation}
	is surjective, with kernel $\Sigma \overline {{\O_{F}}/{m_2}}$. This is because by \cite[Lemma 5.49]{barucco:t2} every element in \eqref{eq:nf} admits a representative $[(0,[g],0,\dots)]$ with only the second component different from zero. Therefore an element in the $F$-th component of \eqref{eq:gh2i2} is a map sending the unit to an element of the form: $[(0,[\hat{t}_2^{-1}f], 0,\dots)]$ with $[f]\in \overline{\quotient {\O_{D_2}}{m_2}}$, which is precisely the $F$-th component of $\delta_1([f])$. 
	This shows that $(\delta_1)_F$ induces the last isomorphism of \eqref{eq:gh2i2}. 
	
	By Lemma \ref{lem:previousarticle} the map $\epsilon_2$ \eqref{eq:commdiaghom} is an isomorphism, since in every $F$-th component and in each even degree it is the map:
	\[
	\pi_1^*:\quotient {\kt}{\O_{\c\gen {n_1}}} \xrightarrow{\cong} \quotient {({\O_{D_2}}/{m_2})}{({\O_{F}}/{m_2})}.
	\]
	where $F=H_1^{n_1}\cap H_2$.
	
	Let us prove the case when $H$ is a finite subgroup of $G$ such that $H\cap H_1=\{1\}$. By Lemma \ref{lem:finitesub} we have $H\cong C_n$ is cyclic and without loss of generality $H\subseteq H_2$. Notice that $H=H_1^{n}\cap H_2$. Using the adjunction \eqref{eq:adjunction}, the first two columns of \eqref{eq:commdiaghom} are zero, since by Lemma \ref{lem:gf} the algebraic model for $G/H_+$ is zero everywhere except at the trivial subgroup. Therefore we are only left to prove that the map $\epsilon_2$ \eqref{eq:commdiaghom} is an isomorphism. By the adjunction \eqref{eq:adjunction}:
	\begin{equation}
		\label{eq:homagf}
		\Hom_{\A}(G/H_+,\inj_2') \cong \bigoplus_{F\subseteq H} \Hom_{\hbg F}(\Sigma^2 \Q, (M\tens 1 \nt)_F) \cong \bigoplus_{n_1\mid n} \overline{\quotient {\O_{\c\gen {n_1}}}{m}},
	\end{equation} 
	where $m<\O_{\c\gen {n_1}}$ is the ideal of those functions vanishing at $\c\gen {n_1}$. The last isomorphism of \eqref{eq:homagf} can be proven similarly to \eqref{eq:homgh2i2} term by term for every $F=H_1^{n_1}\cap H_2$. An element in the $F$-th component of \eqref{eq:homagf} is an $\hbg {F}\cong \Q[x_1,x_2]$-module map sending the unit of $\Q$ to an element of $(M\tens 1 \nt)_{F}$ which is zero if multiplied by $x_1$ or $x_2$ \eqref{eq:xi}. By Lemma \ref{lem:comfyform} such an element is sum of elements of the form:
	\[
	[([x_1^{-1}x_2^{-1}], 0, 0, \dots)]\tens {\Q[x_1]}(x_1 \otimes \quotient f{t_{n_1}})
	\]
	with $[f]\in \overline{\quotient {\O_{\c\gen {n_1}}}{m}}$, hence we obtain \eqref{eq:homagf}. Similarly:
	\begin{equation}
		\label{eq:homagf2}
		\Hom_{\A}(G/H_+,\inj_2) \cong \bigoplus_{F\subseteq H} \Hom_{\hbg {F}}(\Sigma^2 \Q, N_{F}) \cong \bigoplus_{F\subseteq H} \overline{\quotient {\O_{F}}{\gen {m_1,m_2}}},
	\end{equation} 
	where $m_i<\O_{F}$ is the ideal of those functions vanishing at $D_{i,n_i}$. The second isomorphism of \eqref{eq:homagf2} is proven for every $F$ similarly to \eqref{eq:gh2i2}. The element $[f]\in \overline{\quotient {\O_{F}}{\gen {m_1,m_2}}}$ defines the $\hbg {F}\cong \Q[x_1,x_2]$-module map that sends the unit of $\Q$ to the following element with only the first component different from zero:
	\[
	[([\frac {f}{\hattin 1 \hat t_{2,1}}], 0, 0, \dots)] \in N_{F},
	\]
	where we have used \eqref{eq:nf}.
	
	By Lemma \ref{lem:previousarticle} the map $\epsilon_2$ \eqref{eq:commdiaghom} is an isomorphism, since in every $F$-th component and in each even degree it is the map:
	\[
	\pi_1^*:\quotient {\O_{\c\gen {n_1}}}{m} \xrightarrow{\cong} \quotient {\O_{F}}{\gen {m_1,m_2}}
	\]
	where $\pi_1^{-1}(\c\gen {n_1})= D_{1,n_1}$, so that a function in $m$ has pullback in $m_1$.
\end{proof}
\begin{rem}
	\label{rem:dictionary}
	The form of this proof suggests a geometric counterpart of the statement directly in Algebraic Geometry. The Lie group homomorphism $z_1:G \twoheadrightarrow \bar G$ induces the projection $\pi_1=\XF(z_1):\X \twoheadrightarrow \c$, while for every subgroup $H$ such that $H\cap H_1=1$ we have the inclusion $i_H:H \hookrightarrow G$ inducing the immersion $\iota=\XF(i_H):\XF(H) \hookrightarrow \X$. This induces the following dictionary between Topology and Algebraic geometry:
	\begin{center}
		\begin{tabular}{cc}
			\toprule
			Topology & Algebraic Geometry \\
			\midrule
			$\ecg$ & $\O_{\X}$ \\
			$\ecgbar$ & $\O_{\c}$  \\
			$\inflat \ecgbar$ & $\pi_1^*\O_{\c}$  \\
			$S^W$ & $\O_{\X}(D_W)$ \\
			$S(V\otimes w)_+$ & $\Sigma^{-1}\quotient{\O_{\X}(D_{V\otimes w})}{\O_{\X}}$ \\
			$\cp V_+$ & $(\pi_1)_*(\Sigma^{-1}\quotient{\O_{\X}(D_{V\otimes w})}{\O_{\X}})$ \\
			$i_H^*\inflat \ecgbar \simeq i_H^*\ecg$ & $\iota^*\pi_1^*\O_{\c}\cong \iota^*\O_{\X}$ \\
			\bottomrule
		\end{tabular}
	\end{center}
	where $V$ is a $\T$-representation and $W$ is a $G$-representation. Denoting $\F=\Sigma^{-1}\quotient{\O_{\X}(D_{V\otimes w})}{\O_{\X}}$, we can also translate the statement of Corollary \ref{cor:h1quot} (using homology instead of cohomology):
	\[
	\pi_*^G(S(V\otimes w)_+\wedge \ecg) \cong \pi_*^{\bar G}(\cp V_+\wedge \ecgbar)
	\] 
	into its Algebraic Geometry counterpart:
	\[
	H^*(\X,\F \otimes \O_{\X}) \cong H^*(\c,(\pi_1)_*(\F \otimes \O_{\X}))\cong H^*(\c, (\pi_1)_*\F \otimes \O_{\c}),
	\]
	that possibly can be proven directly using geometric arguments like \eqref{eq:doubledirect}, \eqref{eq:directimage}, and working on the higher images functors.
\end{rem}
We conclude with the results needed for the proof of Theorem \ref{thm:extgroupiso}.
\begin{lemma}
	\label{lem:finitesub}
	If $F\subseteq G$ is a finite subgroup of $G$ such that $F\cap H_1=\{1\}$, then $F$ is cyclic and there is a connected codimension $1$ subgroup $H_i$ such that $F\subseteq H_i$ and $H_i\cap H_1=\{1\}$.
\end{lemma}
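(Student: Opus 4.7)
The plan is to exploit that $H_1=1\times\T$ is the kernel of the first-coordinate projection $p_1\colon G=\T\times\T\to \T$, $(x,y)\mapsto x$. Under this description, the hypothesis $F\cap H_1=\{1\}$ says precisely that $p_1$ restricts to an injective homomorphism on $F$. Since every finite subgroup of the circle $\T$ is cyclic, $F$ is cyclic, settling the first assertion.

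For the second assertion, I would construct by hand a primitive character $z\colon G\to \T$ that vanishes on $F$. Set $n=|F|$ and pick a generator $g=(\zeta,\eta)\in F$. Injectivity of $p_1|_F$ forces the order of $\zeta$ to be $n$, so $\zeta$ is a primitive $n$-th root of unity and $\eta$ satisfies $\eta^n=1$; hence $\eta=\zeta^k$ for some integer $k$. I then propose the character
\[
z(x,y):=x^{-k}y.
\]
Its exponent vector $(-k,1)$ has coprime entries, so it is primitive and $H:=\ker z$ is a connected closed codimension-$1$ subgroup of $G$, and therefore equals $H_i$ for some $i\geq 1$ in the indexed collection $\{H_i\}_{i\geq 1}$. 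The containment $F\subseteq H$ is immediate from $z(\zeta,\zeta^k)=\zeta^{-k}\zeta^k=1$, and $H\cap H_1=\{(1,y)\in G:y=1\}=\{1\}$ follows from the second entry of $(-k,1)$ being $1$.

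There is no real obstacle in the argument; the only point to keep in mind is that the chosen character must have primitive exponent vector, so that its kernel is connected of codimension $1$, and this is automatic here because one of the exponents is $1$. As a bonus, the same construction makes the ``without loss of generality'' reduction used in the proof of Theorem \ref{thm:extgroupiso} completely explicit: the Lie-group automorphism $\varphi\colon G\to G$, $(x,y)\mapsto(x,x^{-k}y)$, is the identity on $H_1$ and carries $H$ onto $H_2=\T\times 1$, so after applying $\varphi$ one may indeed assume $F\subseteq H_2$.
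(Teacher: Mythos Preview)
Your proof is correct. For cyclicity you take a more direct route than the paper: the paper argues by contradiction (if $F$ were not cyclic it would contain some $\Z/p\Z\times\Z/p\Z$, which must coincide with the $p$-torsion $G[p]$ and hence meet $H_1$ nontrivially), whereas you simply observe that $p_1|_F$ embeds $F$ into $\T$. For the second assertion the paper passes to the universal cover $\R^2\to\T^2$, uses a B\'ezout identity to normalize a generator of $F$ to the form $[(1/n,sb/n)]$, and takes $H_i$ to be the image of the line through $(1,sb)$; your construction via the primitive character $z(x,y)=x^{-k}y$ is the Pontryagin-dual description of exactly the same subgroup (with $k=sb$), but bypasses the explicit B\'ezout step because you already know $\zeta$ has full order $n$. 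Your argument is shorter and also supplies the explicit automorphism justifying the ``without loss of generality $F\subseteq H_2$'' reduction; the paper's version has the minor advantage of staying inside the lines-in-$\R^2$ picture used elsewhere in the discussion of codimension-$1$ subgroups.
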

\begin{proof}
	Suppose $F$ is not cyclic, then it contains at least a $p$-group of the form $F'\cong \Z/p\Z \times \Z/p\Z$ for a prime $p$. But inside $G$ there is only one copy of such a $p$-group, namely the subgroup of elements of order $p$ in $G$: $G[p]=\T[p]\times \T[p]$. This is immediate to see since every element in $F'$ has order $p$, therefore $F'\subseteq G[p]$ and they have the same cardinality. This gives us a contradiction since  $|F'\cap H_1|=p>1$.
	
	To prove the existence of such an $H_i$ let us think about $G$ as the quotient $G=\T\times \T\cong (\R\times \R)/(\Z\times \Z)$. Every connected codimension 1 subgroup $H_i$ is determined by a point $P_i=(\lambda_i,\mu_i)\in \Z \times \Z$ for a pair of coprime integers $\lambda_i$ and $\mu_i$, namely $H_i$ is the image in the quotient $(\R\times \R)/(\Z\times \Z)$ of the line in $\R\times \R$ connecting the origin to the point $P_i$. For example $H_1$ is determined by the point $P_1=(0,1)$. By the first part of the Lemma $F$ is cyclic and generated by a point $Q=[(a/n,b/n)]\in (\R\times \R)/(\Z\times \Z)$ with $a,b \in \Z$ and where $n$ is the order of $F$. Since $F\cap H_1=\{1\}$ we have $a\neq 0$ and that $a$ and $n$ are coprimes, otherwise we can find a non-trivial multiple of $Q$ not in $\Z\times \Z$ but that lies in $H_1$. Therefore we can find $r,s\in \Z$ such that $rn+sa=1$. For obvious reason $s$ and $n$ are coprimes, so the $s$-th multiple of $Q$: 
	\[
	Q'=sQ=[(\frac {sa}n,\frac {sb}n)]=[(r+\frac {sa}n,\frac {sb}n)]=[(\frac 1n, \frac {sb}n)]
	\]
	is a generator for the subgroup $F$. The subgroup $H_i$ defined by the point $P_i=(1,sb)$ satisfies all the requirements. We have $H_i\cap H_1=\{1\}$ since the line connecting the origin to $P_i$ does not intersect any integer vertical line. Moreover the line connecting the origin to $P_i$ passes through $Q'$, therefore $F\subseteq  H_i$ since $Q'$ generates $F$.  
\end{proof}

The following is an extension of \cite[Lemma 5.49]{barucco:t2}, with the same notation.
\begin{lemma}
	\label{lem:previousarticle}
	The pullback along the projection $\pi_1:\X \to \c$ induces an isomorphism:
	\begin{equation}
		\label{eq:pi11}
		\pi_1^*: \kt \xrightarrow{\cong} \quotient {\O_{D_2}}{m_2},
	\end{equation}
	that restricted to $\O_{\c\gen {n_1}}$ for any integer $n_1\geq 1$ gives an isomorphism:
	\begin{equation}
		\label{eq:pi12}
		\pi_1^*: \O_{\c\gen {n_1}} \xrightarrow{\cong} \quotient {\O_{F}}{m_2}
	\end{equation}
	where $F= H_1^{n_1}\cap H_2$.
\end{lemma}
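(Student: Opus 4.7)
The plan is to write down $\pi_1^*$ explicitly, check it lands in $\O_{D_2}$, construct its inverse as restriction to $D_2\cong\c$, and then refine the argument to the subring $\O_{\c\gen{n_1}}$.

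First I would verify that $\pi_1^*$ sends $\kt$ into $\O_{D_2}$. If $f\in\kt$ has poles only at torsion points $P\in\c\gen n$ for various $n$, then $\pi_1^*f$ has poles only along $\pi_1^{-1}(\c\gen n)=D_{1,n}$, which belongs to the allowed pole collection $\{\dij\}$, hence $\pi_1^*f\in\K$. Since $D_2\neq D_{1,n}$ for every $n$, the pullback is regular at the generic point of $D_2$, i.e.\ lies in $\O_{D_2}$, and postcomposing with the quotient $\O_{D_2}\twoheadrightarrow\O_{D_2}/m_2$ yields the candidate map of \eqref{eq:pi11}.

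For the inverse, observe that $\pi_1$ restricted to $D_2\cong\c$ is the identification with the first factor. Given $\tilde g\in\O_{D_2}$, regularity at the generic point of $D_2$ means $\tilde g|_{D_2}$ is a well-defined meromorphic function on $D_2\cong\c$, whose poles can come only from intersections $D_{ij}\cap D_2$ for $D_{ij}\neq D_2$; such intersections are finite sets of torsion points of $\c$, so $\tilde g|_{D_2}\in\kt$. This restriction clearly factors through $m_2$ to give $\bar r\colon \O_{D_2}/m_2\to\kt$. The two maps are mutually inverse: $\bar r(\pi_1^*f)=(\pi_1^*f)|_{D_2}=f$ directly, and for $\tilde g\in\O_{D_2}$ the element $\pi_1^*(\bar r(\tilde g))$ agrees with $\tilde g$ on $D_2$ by construction, so differs from $\tilde g$ by an element of $m_2$.

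The refinement to \eqref{eq:pi12} then follows from the two geometric identities $\pi_1^{-1}(\c\gen{n_1})=D_{1,n_1}$ and $\XF(F)=D_2\cap D_{1,n_1}$, the latter obtained from $F=H_1^{n_1}\cap H_2$ by exactness of the functor $\XF$. Indeed $f\in\kt$ is regular at $\c\gen{n_1}$ iff $\pi_1^*f$ is regular at $D_{1,n_1}$; imposing this on top of regularity at $D_2$ is precisely what cuts $\O_F$ out of $\O_{D_2}$. Symmetrically, $\bar r(\tilde g)=\tilde g|_{D_2}$ is regular at $\c\gen{n_1}\subset D_2$ precisely when $\tilde g$ is regular at $D_{1,n_1}$, since $D_{1,n_1}\cap D_2$ corresponds to $\c\gen{n_1}$ under the identification $D_2\cong\c$. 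The main obstacle is the careful bookkeeping of the rings $\O_{D_2}$, $\O_F$ and $m_2$, tracking which $\{\dij\}$-regularity conditions survive under $\pi_1^*$ and under restriction; once the elementary geometric fact $\pi_1^{-1}(\c\gen n)=D_{1,n}$ is in hand the maps are forced and the argument becomes an immediate extension of Lemma 5.49 of \cite{barucco:t2}.
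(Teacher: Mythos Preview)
Your approach is the paper's own: both reduce \eqref{eq:pi11} to the observation that $\pi_1$ restricted to $D_2$ is the identification $D_2\cong\c$ (the paper simply cites \cite[(5.50)]{barucco:t2} for this, you unpack it), and both handle \eqref{eq:pi12} via the identity $\pi_1^{-1}(\c\gen{n_1})=D_{1,n_1}$ together with \cite[Lemma~5.49]{barucco:t2}.

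There is one imprecision worth flagging in your direct treatment of \eqref{eq:pi12}. You assert that $\tilde g|_{D_2}$ is regular at $\c\gen{n_1}$ ``precisely when $\tilde g$ is regular at $D_{1,n_1}$'', and that this condition on top of regularity at $D_2$ is what cuts $\O_F$ out of $\O_{D_2}$. Neither claim is true for a general $\tilde g\in\O_{D_2}$: the ring $\O_F$ requires regularity at \emph{every} $D_{i,n_i(F)}$, not just $i=1,2$, and a pole of $\tilde g$ along some $D_{i,j}$ with $i\neq 1,2$ that meets $D_2$ at a point $(P,e)$ with $P\in\c\gen{n_1}$ would still produce a pole of the restriction even if $\tilde g$ is regular at $D_{1,n_1}$. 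What actually makes your conclusion go through for $\tilde g\in\O_F$ is the small computation that for $P\in\c\gen{n_1}$ the divisors of the family $\{D_{ij}\}$ passing through $(P,e)$ are exactly the $D_{i,n_i(F)}$ (since the exact order of $\lambda_iP$ equals $n_i(F)$), so that $\tilde g\in\O_F$ is in fact regular at the Zariski point $(P,e)$. This is precisely the content of \cite[Lemma~5.49]{barucco:t2}, which is why the paper invokes it rather than arguing from $D_{1,n_1}$ alone.
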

\begin{proof}
	The isomorphism \eqref{eq:pi11} is \cite[(5.50)]{barucco:t2} for the point $P=\{e\}$ and for the projection $\pi_1$ instead of $\pi_2$. To prove \eqref{eq:pi12} it is enough to notice that $\pi_1^{-1}(\c\gen {n_1})=D_{1,n_1}$ and therefore \eqref{eq:pi11} restricted to $\O_{\c\gen {n_1}}$ gives functions that are regular also at $D_{1,n_1}$, by \cite[Lemma 5.49]{barucco:t2} we obtain that the image is precisely $\O_{F}$.
\end{proof}
\begin{lemma}
	\label{lem:comfyform}
	For every finite subgroup $F=H_1^{n_1}\cap H_2\subseteq H_2$, every element in the $F$-th component $(M\tens 1 \nt)_F$ is sum of elements of the form
	\begin{equation}
		\label{eq:comfyelements}
		[([x_1^{-k}x_2^{-h}], 0, 0, \dots)]\tens {\Q[x_1]}(x_1^k\otimes f)\in (M\tens 1 \nt)_F
	\end{equation}
	such that $h,k\geq 1$ and $ft_{n_1}^k\in \O_{\c\gen {n_1}}$. Here $M$ is defined in \eqref{eq:m1}, $\nt$ in \eqref{eq:exacts1}, and we have used the coordinates $\hbg F\cong \Q[x_1,x_2]$ \eqref{eq:xi}.
\end{lemma}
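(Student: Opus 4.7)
The plan is to normalize a general element of $(M \otimes_1 \nt)_F$ by exploiting (i) the diagonal identification in the definition of $M$, which allows us to relocate content between the various slots, together with (ii) the explicit description of $\nt$ as the kernel of the surjection $q$ in \eqref{eq:exacts1}. The point is that $F = H_1^{n_1} \cap H_2$ lies in $H_2$ but is disjoint from $H_1$, and this asymmetry is exactly what makes one slot preferred.

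First I would unpack $M_F$. By definition $M = (\bigoplus_{i\geq 1}\eginv \of / \ehinv i \of)/\Delta$, so $M_F$ is the direct sum over $i\geq 1$ of $\eginv \hbg F / \ehinv i \hbg F$ modulo the image of the diagonal. Using the formula \eqref{eq:ezi} for Euler classes at $F$, the summands indexed by subgroups $H_i$ not containing $F$ have all their characteristic Euler classes already invertible, so they are absorbed by the diagonal, while the single diagonal relation lets us trade the remaining contribution (in the $i=2$ slot, since $F\subseteq H_2$) against content in the $i=1$ slot. Because $F\not\subseteq H_1$ the class $x_1$ is not inverted in $\ehinv 1 \hbg F$ whereas it is inverted in $\eginv \hbg F$, so the $i=1$ quotient is spanned by monomials $[x_1^{-k} x_2^{-h}]$ with $h,k\geq 1$. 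This produces the first tensor factor appearing in \eqref{eq:comfyelements}.

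Next I would deal with the tensor product over $\oefh 1 = \prod_n \Q[c_n]$. Since $F$ has order $n_1$, only the $n_1$-th factor of $\oefh 1$ acts nontrivially, and it acts on $M_F$ through the identification of $c_1$ with $x_1$. An element of $\nt$ in that component, by \eqref{eq:exacts1}, is a finite sum of pure tensors $c_1^k\otimes f \in \Q[c_1^{\pm 1}]\otimes \overline{\K_\T}$ whose image under $q$ vanishes, which unpacks to the condition $\hatn{n_1}^k f\in \ocn{n_1}$. Since $\hatn{n_1}$ and $t_{n_1}$ differ by a unit in $\ocn{n_1}$, this is equivalent to $t_{n_1}^k f\in \ocn{n_1}$. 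Sliding the $x_1^k$-factor across the tensor puts a general generator in exactly the form \eqref{eq:comfyelements}.

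The main obstacle is the bookkeeping in the first step: one must verify that the single diagonal submodule is rich enough to sweep every contribution into the $i=1$ slot. I would argue this by noting that at the finite $F$ every Euler class of the form $\lambda_i x_1+\mu_i x_2$ with $F\not\subseteq H_i$ is a unit in $\eginv \hbg F$ and in $\ehinv i \hbg F$, so the diagonal relation explicitly solves for the content of each non-first slot in terms of a class supported in the $i=1$ slot (up to $\ehinv 1 \hbg F$, which is killed in the quotient). Once this normal form is established, additivity delivers the claim.
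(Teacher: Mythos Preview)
Your outline follows the same three steps as the paper: put $M_F$ into first-slot normal form, read off the $(\nt)_{n_1}$-condition from \eqref{eq:exacts1}, and then balance $x_1$-exponents across the tensor product. The paper makes the last step precise via the isomorphism
\[
\frac{\eginv \Q[x_1,x_2]}{\ehinv 1 \Q[x_1,x_2]}\tens{\Q[x_1]}(\nt)_{n_1}\;\cong\;\ehinv 1 \Q[x_1,x_2]\tens{\Q[x_1]}\frac{\kt}{\ocn{n_1}},
\]
which is exactly your ``sliding $x_1^k$ across the tensor''.

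Your justification of the first step, however, contains a concrete error. You assert that for $F\not\subseteq H_i$ the class $x_i=\lambda_i x_1+\mu_i x_2$ is a unit in $\ehinv i \hbg F$; this is false. By Example~\ref{ex:eulerclasses} the set $\E_{H_i}$ inverts every $x_j$ with $j\neq i$ and \emph{never} inverts $x_i$, independently of whether $F\subseteq H_i$. Hence every summand $\eginv \hbg F/\ehinv i \hbg F$ is nonzero, and the single diagonal relation does not sweep them all into slot~$1$ by the mechanism you describe. The first-slot normal form is nevertheless correct: it is the identification $M_F\cong \Sigma^4 \hombg F$ built into the rewriting of \eqref{eq:s0res}, which exhibits $M_F$ as the injective hull of the residue field of $\Q[x_1,x_2]$ and hence as spanned by the classes $[x_1^{-k}x_2^{-h}]$ sitting in slot~$1$. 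The paper simply asserts this normal form and moves on; once you replace your diagonal argument with this identification, the remainder of your proof matches the paper.
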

\begin{proof}
	Every element in the $F$-th component of $M$ \eqref{eq:m1} can be written as:
	\begin{equation*}
		[([\frac {p(x_1,x_2)}{x_1^kx_2^h}], 0, 0, \dots)]
		\in \quotient {(\bigoplus_{i\geq 1}\frac {\eginv \qxy}{\ehinv i \qxy})}{\Delta}
	\end{equation*}
	with $k,h\geq 1$ and only the first component of the representative of the class different from zero.
	
	Every element in $(\nt)_{n_1}$ can be written as $x_1^sx_1^r\otimes f$ for $s\geq 0$, with $t_{n_1}^rf$ regular at $\c\gen {n_1}$ and that does not vanish on $\c\gen {n_1}$.
	
	Therefore every element in $(M\tens 1 \nt)_F$ is sum of elements of the form:
	\begin{equation}
		\label{eq:classelement}
		[([x_1^{-k}x_2^{-h}], 0, 0, \dots)]\tens {\Q[x_1]}(x_1^r\otimes f)= [([x_1^{-k}x_2^{-h}], 0, 0, \dots)]\tens {\Q[x_1]}(x_1^k\otimes (ft_{n_1}^{r-k})).
	\end{equation}
	The equality \eqref{eq:classelement} is true since under the isomorphism:
	\begin{equation}
		\frac {\eginv \qxy}{\ehinv 1\qxy}\tens {\Q[x_1]}(\nt)_{n_1} \cong \ehinv 1\qxy \tens {\Q[x_1]}\quotient {\kt}{\O_{\c\gen {n_1}}} 
	\end{equation}
	the two elements in the left hand side: $[x_1^{-k}x_2^{-h}] \tens {\Q[x_1]}(x_1^r\otimes f)$ and $[x_1^{-k}x_2^{-h}] \tens {\Q[x_1]}(x_1^k\otimes (ft_{n_1}^{r-k}))$ are sent to the same element in the right hand side.
\end{proof}

\appendix
\section{Algebraic models}
\label{appendix:algmodel}
In this section we present a self-contained account of algebraic models for tori of any rank, therefore in all this section $G=\T^r$ is an r-dimensional torus, with $r\geq 0$. We also specify that all the modules over graded rings are graded and all the maps between graded modules are graded maps. As in the rest of the paper everything is rationalized without comment.

Algebraic models are a useful tool to study rational equivariant cohomology theories. The main idea is to define an abelian category $\A(G)$ and homology functor from the category of rational $G$-equivariant orthogonal spectra to $\A(G)$:
\begin{equation}
	\label{eq:homologyfunctor2}
	\pi_*^{\A}:\sp G_{\Q} \to \A(G)
\end{equation}
equipped with an Adams spectral sequence to compute maps in the homotopy category of rational $G$-spectra. More precisely the values of the theory may be calculated by a spectral sequence:
\begin{equation}
	\label{eq:ass2}
	\Ext^{*,*}_{\A}(\pi_*^\A(X),\pi_*^\A(Y)) \Longrightarrow [X,Y]^G_{*}.
\end{equation}
In the case of tori we have a zig-zag of Quillen-equivalences \cite[Theorem 1.1]{john:torusship}):
\begin{equation}
	\label{eq:shipley}
	\sp G_{\Q}\simeq_Q d\A(G),
\end{equation}
where $d\A(G)$ is the model category of differential graded objects in $\A(G)$. Therefore we can build rational $G$-equivariant cohomology theories simply constructing objects in $d\A(G)$.

\subsection{Definition of the rings}
We start by defining the rings needed for the construction of $\A(G)$ \cite[Section 3.A.]{john:torusI}. We write $\F$ for the family of finite subgroups of $G$.

\begin{definition}
	For every connected subgroup $H$ of $G$ define the collection:
	\begin{equation*}
		\quotient {\F}H:=\{\tilde{H}\leq G \mid H  \text{ finite index in } \tilde H \}
	\end{equation*}
	and the ring:
	\begin{equation}
		\label{eq:ofhi}
		\oef H := \prod_{\tilde H \in \F/H}H^*\left(B\left( \quotient G{\tilde H}\right)\right)
	\end{equation}
\end{definition}

\begin{rem}
	Note $\oef G=\Q$ and $\oef 1=\of$.
\end{rem}
Any containment of connected subgroups $K\subseteq H$ induces an inflation map $\oef H \to \oef K$, defined in the following way. 
\begin{definition}
	The inclusion $K\subseteq H$ of connected subgroups defines a quotient map $q:\quotient GK \to \quotient GH$, and hence
	\begin{equation}
		\label{eq:qstar}
		q_*:\quotient \F{K}\to \quotient \F{H}.
	\end{equation}
	For any $\tilde{K}\in \quotient \F{K}$ define the $\tilde{K}$-th component of the inflation map $\oef H \to \oef K$ to be the composition:
	\begin{equation}
		\label{eq:inflationmap}
		\oef H=\prod_{\tilde H \in \F/H}\hbg {\tilde H} \to \hbg {q_*\tilde{K}} \to \hbg {\tilde K}
	\end{equation}
	given by projection onto the term $\hbg {q_*\tilde{K}}$ followed by the inflation map induced by the quotient $\quotient G{\tilde K} \to \quotient G{q_*\tilde K}$.
	
\end{definition}
\begin{rem}
	\label{rem:inflationmap}
	In particular for any connected subgroup $H$ we have an inflation map induced by the inclusion of the trivial subgroup:
	\begin{equation}
		\label{eq:inflation}
		i_H:\oef H \to \of 
	\end{equation}
	which is a split monomorphism of $\oef H$-modules \cite[Proposition 3.1]{john:torusII}. As a consequence $\of$ is an $\oef H$-module for every connected subgroup $H$.
\end{rem}

\subsection{Euler classes}
\label{sub:eulerclass}
Fundamental elements of these rings are Euler classes of representations of $G$, used in the localization process. For any complex representation $V$ of $G$ we want to define its Euler class $e(V)\in \of$ \cite[Section 3.B.]{john:torusI}. We require them to be multiplicative: $e(V\oplus W)=e(V)e(W)$, therefore it's enough to define Euler classes for one dimensional complex representations $V$.
\begin{definition}
	For a one dimensional complex representation $V$ of $G$, define its Euler class $e(V)\in \of$ as follows. For every finite subgroup $F$ the $F$-th component $e(V)_F\in \hbg F$ is:
	\begin{equation}
		\label{eq:eulerclass}
		e(V)_F=
		\begin{cases*}
			1 & if $V^F=0$ \\
			\bar e(V^F) & if $V^F\neq 0$,
		\end{cases*}
	\end{equation}
\end{definition}
where $\bar e(V^F)\in H^2(BG/F)$ is the classical equivariant Euler class for the $G/F$ representation $V^F$.

\begin{definition}
	For any connected subgroup $H$ of $G$ define the multiplicatively closed subset of $\of$:
	\begin{equation}
		\label{eq:eh}
		\E_H:=\{e(V)\mid V^H=0\}.
	\end{equation}
\end{definition}

\subsection{The $2$-torus}
We are mainly interested in the case of the 2-torus, therefore let us compute explicitly rings and Euler classes in this case. Recall that $\{H_i\}_{i\geq 1}$ is the collection of connected closed codimension $1$ subgroups of $\T^2$ and $H_i^j$ is the subgroup with $j$-components and identity component $H_i$. Let $z_i$ be a character of $\T^2$ with kernel $H_i$ and $z_i^j$ be $z_i$ post-composed with the multiplication by $j$ map (it has kernel $H_i^j$). In the case of the 2-torus:
\begin{itemize}
	\item $\oef {\T^2}=\Q $
	\item $\oefh i= \prod_{j\geq 1} H^*(B\T^2/{H_i^j})$
	\item $\of= \prod_{F} H^*(B\T^2/F)$ , where $F$ runs through all the finite subgroups of $\T^2$.
\end{itemize}
For every $i,j\geq 1$:
\begin{equation}
	\label{eq:cij}
	\begin{split}
		H^*(B\T^2/{H_i})&\cong \Q[c_{i}]\\
		H^*(B\T^2/{H_i^j})&\cong \Q[c_{ij}]
	\end{split}
\end{equation}  
where $c_{i}=e(z_i)$ and $c_{ij}=e(z_i^j)$ both of degree $-2$ are the Euler classes of the characters $z_i$ and $z_i^j$ (more precisely of the one dimensional complex representations defined by those characters).

\begin{definition}
	\label{def:ni}
	For every finite subgroup $F$ of $\T^2$ and every index $i\geq 1$ define $n_i=n_i(F)$ to be the only positive integer such that $H_i^{n_i}$ is generated by $H_i$ and $F$: $\gen {F,H_i}=H_i^{n_i}$.
\end{definition}

Every finite subgroup $F$ can be written as the intersection of two codimension one subgroups of $\T^2$. Therefore for every $F$ there exists two different integers $A=A(F)\geq 1$ and $B=B(F)\geq 1$ such that
\begin{equation}
	\label{eq:splitting}
	F=H_A^{n_A}\cap H_B^{n_B}.
\end{equation}
\begin{choice}
	For any finite subgroup $F$ we choose a pair of positive integers $(A,B)$ that give the decomposition \eqref{eq:splitting}.
\end{choice}

By \eqref{eq:splitting} we obtain the decomposition:
\begin{equation}
	\label{eq:xaxb}
	H^*(B\T^2/F)\cong H^*(B\T^2/{\ha})  \otimes H^*(B\T^2/{\hb}) \cong \Q[\xa,\xb].
\end{equation} 
Where $\xa:=e(\za)$, $\xb:=e(\zb)$ have both degree $-2$ are the Euler classes respectively of $\za$ and $\zb$.
\begin{definition}
	\label{def:xi}
	For every $i\geq 1$ define
	\begin{equation}
		\label{eq:xi}
		x_i:=e(z_i^{n_i})\in H^2(B\T^2/F).
	\end{equation} 
	Notice it is an integral linear combination of $\xa$ and $\xb$.
\end{definition}

\begin{rem}
	\label{rem:inflationcomp}
	With these choices of coordinates \eqref{eq:cij} and \eqref{eq:xaxb} the inflation map \eqref{eq:inflation} on the $F$-th component of the target $\of$ can be easily described:
	\begin{equation}
		\label{eq:inflationhi}
		\oefh i = \prod_{j\geq 1}H^*(B\T^2/{H_i^j}) \to H^*(B\T^2/{H_i^{n_i}}) \rightarrowtail H^*(B\T^2/F).
	\end{equation}
	The first map of \eqref{eq:inflationhi} is the projection onto the $n_i$-th component since $q_*(F)=\gen {H_i,F}=H_i^{n_i}$ by definition of the index $n_i$. The second map of \eqref{eq:inflationhi} is the natural inclusion of $\Q$-algebras sending the generator $c_{i,n_i}$ to $x_i$, since by \eqref{eq:xi} they are the same Euler class $e(z_i^{n_i})$ for the two different rings.
\end{rem}

\subsection{Description of $\A(G)$}
We briefly recap the description of $\A(G)$ \cite[Definition 3.9]{john:torusI}. The objects of $\A(G)$ are sheaves of modules over the poset of connected subgroups of $G$ with inclusions.

\begin{definition}
	\label{def:objectag}
	An object $X\in \A(G)$ is specified by the following pieces of data:
	\begin{enumerate}
		\item For every connected subgroup $H$ an $\oef H$-module $\phi^HX$.
		\item For every containment of connected subgroups $K\subseteq H$ an $\oef K$-modules map:
		\begin{equation}
			\label{eq:strmap}
			\phi^KX\to \E_{\quotient HK}^{-1}\oef K\tens {\oef H}\phi^HX.
		\end{equation}
	\end{enumerate}
	Then $X$ is a sheaf over the space of connected subgroups of $G$. This specifically means that for every connected subgroup $H$, the sheaf $X$ has value the $\of$-module:
	\begin{equation}
		\label{eq:valuex}
		X(H):=\E_H^{-1}\of \tens {\oef H} \phi^HX,
	\end{equation}
	and that for every containment $K\subseteq H$ of connected subgroups, $X$ has a structure map of $\of$-modules:
	\begin{equation}
		\label{eq:strmapofx}
		\beta_K^H:X(K) \to X(H).
	\end{equation}
	The map \eqref{eq:strmapofx} is obtained tensoring the $\oef K$-modules map \eqref{eq:strmap} with the $\oef K$-module $\E_K^{-1}\of$. Moreover $X$ satisfies the condition that for every connected subgroup $H$ the $\of$-modules structure map $\beta_1^H:X(1)\to X(H)$ is the map inverting the multiplicatively closed subset of Euler classes $\E_H$ \eqref{eq:eh}.
\end{definition}
\begin{rem}
	Notice that by Remark \ref{rem:inflationmap} the inflation map $i_K$ makes $\of$ an $\oef K$-module. Moreover the structure map \eqref{eq:strmap} is well defined from \eqref{eq:strmap}, since:
	\begin{equation}
		\label{eq:strtensored}
		\E_K^{-1}\of\tens {\oef K} \E_{\quotient HK}^{-1}\oef K \cong \E_H^{-1}\of.
	\end{equation}
\end{rem}

\begin{ex}
	\label{ex:2tor}
	For the 2-torus an object $X\in \A(\T^2)$ has the shape:
	\begin{equation*}
		\begin{tikzcd}
			& X(\T^2) &  \\
			X(H_1) \arrow[ur, "\beta_{H_1}^{\T^2}"] & X(H_2) \arrow[u] & \dots \arrow[ul] \\
			& X(1) \arrow[ur] \arrow[u] \arrow[ul, "\beta_1^{H_1}"] & 
		\end{tikzcd}=
		\left[ \begin{tikzcd}
			X(\T^2) \\
			X(H_i) \arrow[u] \\
			X(1) \arrow[u]  
		\end{tikzcd}\right] 
	\end{equation*}
	with infinitely many values $X(H_i)$ in the middle row, one vertex $X(\T^2)$ and one value $X(1)$ at the bottom level. By \eqref{eq:valuex}: 
	\begin{equation}
		\label{eq:valuet2}
		\begin{split}
			X(\T^2) & =\etinv \tens {\Q} \phi^{\T^2}X \\
			X(H_i) & =\ethinv i \of \tens {\oefh i}\phi^{H_i}X \\
			X(1) & =\of \tens {\of} \phi^1X = \phi^1X
		\end{split}
	\end{equation}
\end{ex}
\begin{nota}
	\label{notation:tensi}
	A tensor product with no ring specified will always mean over $\Q$. For any $i\geq 1$ we denote $\tens i=\tens {\oefh i}$ the tensor product over the ring $\oefh i$ or when we are considering the $F$-th component: $\tens i=\tens {\hbti {n_i}}$.
\end{nota}
\begin{ex}
	\label{ex:eulerclasses}
	For the 2-torus using the coordinates we have defined (\eqref{eq:cij}, \eqref{eq:xaxb}, and  \eqref{eq:xi}), we can easily describe the localizations at the Euler classes:
	\begin{itemize}
		\item In $\eghinv i \hbti j$ we are inverting the Euler class $c_{ij}$.
		\item In $\ehinv i\hbt F$ we are inverting all the Euler classes $x_j$ with $j\geq 1$ and $j\neq i$.
		\item In $\etinv \hbt F$ we are inverting all the Euler classes $x_j$ with $j\geq 1$. 
	\end{itemize}
\end{ex}
\begin{ex}
	There is a structure sheaf $\O\in \A(G)$ \cite[Definition 3.3]{john:torusI} obtained using as modules the base rings: $\phi^H\O=\oef H$, and as structure maps the natural inclusions:
	\[
	\oef K \rightarrow \E_{\quotient HK}^{-1}\oef K\tens {\oef H}\oef H.
	\]
\end{ex}
\begin{definition}
	A morphism $f:X\to Y$ in the category $\A(G)$ is the data of a (graded) $\oef H$-module map $\phi^Hf:\phi^HX \to \phi^HY$ for every connected subgroup $H$, compatible with the structure maps of $X$ and $Y$ (it makes the evident commutative diagrams between different levels commute \cite[Definition 3.6]{john:torusI}).
\end{definition}
\begin{rem}
	\label{rem:bottomlevel}
	A morphism $f:X\to Y$ in $\A(G)$ is almost determined by what it does at the trivial subgroup level $f(1):X(1)\to Y(1)$ (that we will call bottom level). This is because for any connected subgroup $H$ the map $f$ at the $H$-th level $f(H):X(H)\to Y(H)$ is then $\einv H f(1)$. Therefore properties like injectivity, surjectivity or exactness for a sequence of morphisms can be checked at the bottom level.
\end{rem}

\subsection{Injectives in $\A(G)$}
The injective objects in $\A(G)$ that we will use are constant below a certain connected subgroup $H$, and zero elsewhere \cite[Section 4.A.]{john:torusI}.
\begin{definition}
	An object $X\in \A(G)$ is concentrated below a connected subgroup $H$ if $X(K)=0$ for every connected subgroup $K\nsubseteq H$. We denote $\A(G)_H$ the full subcategory of $\A(G)$ of objects concentrated below $H$.
\end{definition}
\begin{definition}
	If $H$ is a connected subgroup of $G$, and $T$ is a graded torsion $\oef H$-module, define $f_H(T)\in \A(G)$ to be the constant sheaf below $H$ with the following values:
	\begin{equation}
		\label{eq:deffh}
		f_H(T)(K):=
		\begin{cases*}
			\einv H\of \tens {\oef H} T & if $K \subseteq H$ \\
			0 & if $K \nsubseteq H$
		\end{cases*}
	\end{equation}
	and structure maps either identities or zero.
\end{definition}
\begin{rem}
	We require $T$ to be torsion so that when we invert $\einv K$ for $K\nsubseteq H$ we obtain zero. Therefore this requirement can be dropped when $H=G$.
\end{rem}
\begin{lemma}[Lemma 4.1 of \cite{john:torusI}]
	\label{lem:adj}
	For any connected subgroup $H$ of $G$ there is an adjunction:
	\begin{equation*}
		\begin{tikzcd}
			\A(G)_H\ar[r,bend left,"\phi^{H}",""{name=A, below}] & 
			\quad \quad\text{Tors-$\oef H$-Mod} \ar[l,bend left,"f_H",""{name=B,above}] \ar[from=A, to=B, symbol=\dashv]
		\end{tikzcd}
	\end{equation*}
	where the left adjoint is the evaluation $\phi^H$. Moreover for any torsion $\oef H$-module $T$, and object $X\in \A(G)$ we have:
	\begin{equation}
		\label{eq:adjunction}
		\Hom_{\oef H}(\phi^HX, T)\cong \Hom_{\A(G)}(X, f_H(T)).
	\end{equation}
\end{lemma}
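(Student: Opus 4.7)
The plan is to exhibit a natural bijection between the two Hom-sets directly. The key observation is that $f_H(T)$ is forced to take only two kinds of values: a common value (up to localization) at every connected subgroup $K\subseteq H$, and zero at every $K\nsubseteq H$. Hence a morphism into $f_H(T)$ carries essentially no information beyond its behaviour at level $H$, and everything at lower levels is determined by the structure maps of the source.

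Concretely, I would define
\[
\Phi\colon\Hom_{\A(G)}(X,f_H(T))\to \Hom_{\oef H}(\phi^HX,T)
\]
by $\Phi(f):=\phi^Hf$, using the identification $\phi^Hf_H(T)=T$. For the inverse $\Psi$, given $g\colon\phi^HX\to T$ I would build $\tilde g\colon X\to f_H(T)$ by setting $\phi^H\tilde g:=g$, declaring $\phi^K\tilde g:=0$ for every $K\nsubseteq H$, and for $K\subsetneq H$ defining $\phi^K\tilde g$ as the composite of the structure map $\phi^KX\to \E_{H/K}^{-1}\oef K\otimes_{\oef H}\phi^HX$ of $X$ with $\mathrm{id}\otimes g$, landing in $\E_{H/K}^{-1}\oef K\otimes_{\oef H}T$, which after further localization is $\phi^Kf_H(T)$.

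The verifications are then: (i) that $\tilde g$ is indeed a morphism in $\A(G)$, which reduces to the compatibility of nested structure maps $K'\subseteq K\subseteq H$ (an immediate consequence of the sheaf axioms for $X$) together with automatic vanishing when the target is zero; (ii) that $\Phi\Psi=\mathrm{id}$ and $\Psi\Phi=\mathrm{id}$, both immediate from the formulas, since evaluating $\tilde g$ at $H$ recovers $g$, and any $f$ is forced to agree with $\widetilde{\phi^Hf}$ at every level $K\subseteq H$ by its compatibility with the structure maps of $X$ and $f_H(T)$. Naturality in $X$ and $T$ is tautological.

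The main technical point I expect to take care with is the containment $\E_K\subseteq \E_H$ for $K\subseteq H$ and the associated identification $\phi^Kf_H(T)\cong \E_{H/K}^{-1}\oef K\otimes_{\oef H}T$, which after tensoring up produces the stated sheaf value $\E_H^{-1}\of\otimes_{\oef H}T$. Once these identifications are in place, the compatibility diagrams for $\tilde g$ either collapse to tautologies inherited from $X$ or to zero-on-both-sides statements. The torsion hypothesis on $T$ is what kills the values of $f_H(T)$ outside the cone below $H$, and hence is what makes the zero-valued components of $\tilde g$ consistent with the localization axioms built into $\A(G)$.
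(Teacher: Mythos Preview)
The paper does not supply its own proof of this lemma: it is quoted verbatim from \cite{john:torusI} (as the bracketed attribution indicates) and is used only as a black box via the isomorphism \eqref{eq:adjunction}. So there is nothing in the present paper to compare your argument against.

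That said, your direct verification is the standard one and is correct. One small clean-up: when you write that the composite lands in $\E_{H/K}^{-1}\oef K\otimes_{\oef H}T$ ``which after further localization is $\phi^Kf_H(T)$'', no further localization is needed---that module already \emph{is} $\phi^Kf_H(T)$, since $f_H(T)(K)=\E_H^{-1}\of\otimes_{\oef H}T$ together with \eqref{eq:strtensored} forces $\phi^Kf_H(T)=\E_{H/K}^{-1}\oef K\otimes_{\oef H}T$. You also correctly isolate the one genuinely nontrivial point: the torsion hypothesis on $T$ is exactly what guarantees that inverting $\E_K$ for $K\nsubseteq H$ annihilates $\E_H^{-1}\of\otimes_{\oef H}T$, so that $f_H(T)$ is a legitimate object of $\A(G)$ and the zero components of $\tilde g$ are compatible with the localization axiom.
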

This in order allows us to transfer torsion injectives $\oef H$-modules into injective objects in $\A(G)$. First notice that if we are given for every $\tilde H\in \quotient {\F}H$ an $\hbg {\tilde H}$-torsion module $T(\tilde H)$, then $\bigoplus_{\quotient {\F}H}T(\tilde H)$ is naturally a torsion $\oef H$-module, with the action given component by component.
\begin{corollary}[Lemma 5.1 of \cite{john:torusI}]
	\label{cor:inj1}
	Suppose $\{H_i\}_{i\geq 1}$ is the collection of all connected subgroups of $G$ of a fixed dimension. If $f_{H_i}(T_i)$ is injective for every $i$, then so is $\bigoplus_{i\geq 1}f_{H_i}(T_i)$.
\end{corollary}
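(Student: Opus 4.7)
The plan is to verify the extension property directly, using the adjunction of Lemma~\ref{lem:adj} to reduce to a component-wise problem and then exploiting the fixed-dimension hypothesis to control the assembly. Given a monomorphism $\iota: A \hookrightarrow B$ in $\A(G)$ and a morphism $f: A \to M := \bigoplus_i f_{H_i}(T_i)$, the goal is to construct $\tilde f: B \to M$ extending $f$. First I would project $f$ onto each summand to obtain $f_i := \pi_i f: A \to f_{H_i}(T_i)$; by the injectivity hypothesis on each $f_{H_i}(T_i)$, each $f_i$ extends to some $\tilde f_i: B \to f_{H_i}(T_i)$, and a candidate extension is the pairing $\tilde f = (\tilde f_i)_i: B \to \prod_i f_{H_i}(T_i)$.

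The main step, and the place I expect the main obstacle, is to show that $\tilde f$ actually factors through the direct sum $\bigoplus_i f_{H_i}(T_i) \subseteq \prod_i f_{H_i}(T_i)$. Concretely, for each connected subgroup $K$ and each $b \in B(K)$, only finitely many components $\tilde f_i(K)(b)$ should be nonzero. This is not automatic: a typical element $b \in B(K)$ with $\dim K < d$ may have nonzero image under the structure map $\beta_K^{H_i}: B(K) \to B(H_i)$ for infinitely many $i$, and by the adjunction the $i$-th component of $\tilde f(K)(b)$ is controlled by $\phi^{H_i}(\tilde f_i)$ composed with this structure map.

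Here the fixed-dimension hypothesis enters through the orthogonality observation: for distinct connected subgroups $H_i, H_j$ of the same dimension $d$, neither contains the other, so $f_{H_i}(T_i)(H_j) = 0$ whenever $i \neq j$. Thus at any top-dimensional level $H_j$, only the $j$-th summand of $M$ contributes, and the adjunction gives $\phi^{H_j}(M) = T_j$. To push this orthogonality downward and secure the finite-support property, I would run a Zorn's lemma argument on the poset of partial extensions $(C, g)$ with $A \subseteq C \subseteq B$ and $g: C \to M$ extending $f$, ordered by subobject inclusion and extension. A maximal element $(C_0, g_0)$ must satisfy $C_0 = B$: any $b \in B \setminus C_0$ can be adjoined by producing a compatible $g$ on the subobject generated by $C_0$ and $b$, where only finitely many components need to be adjusted independently thanks to the injectivity of each $f_{H_i}(T_i)$. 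The delicate point is verifying that such single-element extensions can always be arranged with finite support, and this is precisely where the orthogonality at the top-dimensional level forces the constraint to propagate down the subgroup lattice through the $\of$-module structure maps.
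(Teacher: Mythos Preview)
The paper does not prove this statement; it is quoted as Lemma~5.1 of \cite{john:torusI} and recorded in the appendix without argument, so there is no in-paper proof to compare against. I will comment on the proposal itself.

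You correctly isolate the only real issue: the componentwise extensions $\tilde f_i$ assemble into a map to $\prod_i f_{H_i}(T_i)$, and the content of the lemma is that this already lands in the direct sum. You are also right that the fixed-dimension hypothesis gives $\phi^{H_j}\bigl(\bigoplus_i f_{H_i}(T_i)\bigr)=T_j$, since distinct $H_i$ of the same dimension are incomparable. But the Zorn step has a genuine gap. When you adjoin $b\in B(K)$ to a maximal partial extension $(C_0,g_0)$, the new value $g(b)$ must satisfy $r\cdot g(b)=g_0(rb)$ for every $r$ with $rb\in C_0(K)$. Each such relation constrains only the finitely many coordinates of $g(b)$ lying in the support of $g_0(rb)$, but ranging over all admissible $r$ these supports can accumulate to infinitely many indices $i$: the ring $\of=\prod_F H^*(BG/F)$ is not Noetherian, and there is no reason the submodule $\{r:rb\in C_0(K)\}$ is finitely generated. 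Your appeal to ``orthogonality at the top-dimensional level forcing the constraint to propagate down'' does not address this: orthogonality controls the picture \emph{at} level $H_j$, not why the totality of relations at a lower level $K$ involves only finitely many directions. Without such a finiteness statement you cannot guarantee a compatible $g(b)$ with finite support, so maximality of $(C_0,g_0)$ does not force $C_0=B$.

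To repair this route you would need an honest finiteness argument for the supports, presumably exploiting the torsion hypothesis on the $T_i$ and the specific shape of the objects $f_{H_i}(T_i)$ in $\A(G)$, rather than abstract injectivity plus Zorn.
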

\begin{corollary}[Corollary 5.2 of \cite{john:torusI}]
	\label{cor:inj2}
	If for every $\tilde H \in \quotient {\F}H$, $T(\tilde H)$ is a graded torsion injective $\hbg {\tilde H}$-module. Then $f_H(\bigoplus_{\quotient {\F}H}T(\tilde H))$ is injective in $\A(G)$.
\end{corollary}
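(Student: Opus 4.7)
The plan is to derive the injectivity of $f_H(T)$ from the adjunction $\phi^H \dashv f_H$ of Lemma \ref{lem:adj}, where $T:=\bigoplus_{\tilde H\in\F/H} T(\tilde H)$. Although Lemma \ref{lem:adj} phrases the adjunction between $\A(G)_H$ and torsion $\oef H$-modules, the bijection \eqref{eq:adjunction} extends verbatim to all of $\A(G)$: since $f_H(T)(K)=0$ for every $K\nsubseteq H$, any morphism $X\to f_H(T)$ automatically vanishes on such $K$ and is therefore determined by its $H$-component $\phi^H X\to T$. Thus it suffices to establish (i) that $\phi^H\colon\A(G)\to\text{torsion }\oef H\text{-modules}$ is exact, and (ii) that $T$ is injective as a torsion $\oef H$-module. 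Combining these with the extended adjunction yields exactness of $\Hom_{\A(G)}(-,f_H(T))\cong \Hom_{\oef H}(\phi^H(-),T)$ in its first variable, which is precisely the desired injectivity.

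Fact (i) is immediate from the construction of the abelian structure on $\A(G)$: short exact sequences are declared exact at each connected subgroup level, so evaluation functors such as $\phi^H$ are exact essentially by definition.

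For fact (ii), decompose the ring as $\oef H=\prod_{\tilde H\in\F/H}\hbg{\tilde H}$, with orthogonal idempotents $e_{\tilde H}$ for the factors. Any torsion $\oef H$-module $N$ splits canonically as $N=\bigoplus_{\tilde H} e_{\tilde H}N$, and the torsion hypothesis forces every individual element of $N$ to be supported on only finitely many factors. Consequently
\[
\Hom_{\oef H}(N,T)\;\cong\; \bigoplus_{\tilde H\in\F/H}\Hom_{\hbg{\tilde H}}\bigl(e_{\tilde H}N,\, T(\tilde H)\bigr),
\]
and exactness of the right-hand side in $N$ follows component-wise from the hypothesis that each $T(\tilde H)$ is torsion injective over $\hbg{\tilde H}$.

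The main technical point requiring care is the $\Hom$-direct-sum identity above: the subtle ingredient is showing that a map from a torsion module $N$ into the direct sum $\bigoplus T(\tilde H)$ factors through only finitely many components. This uses the torsion hypothesis essentially, since without it one could have maps which are non-zero on infinitely many factors simultaneously, and $\Hom$ would only decompose as a direct product rather than a direct sum, obstructing exactness.
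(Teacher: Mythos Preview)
The paper does not supply its own proof of this statement; it is recorded in the appendix purely as a citation of \cite[Corollary~5.2]{john:torusI}. So there is nothing in the present paper to compare your argument against, and the relevant question is simply whether your proof stands on its own. Your overall strategy is the standard and correct one: the adjunction \eqref{eq:adjunction} together with exactness of $\phi^H$ reduces the problem to showing that $T=\bigoplus_{\tilde H}T(\tilde H)$ is injective in torsion $\oef H$-modules, and that in turn is checked componentwise via the idempotent decomposition.

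There is one genuine confusion in your last two paragraphs that you should correct. The displayed identification of $\Hom_{\oef H}(N,T)$ is a \emph{product}, not a direct sum: writing $N=\bigoplus_{\tilde H}e_{\tilde H}N$ one has
\[
\Hom_{\oef H}(N,T)\;\cong\;\prod_{\tilde H}\Hom_{\hbg{\tilde H}}\bigl(e_{\tilde H}N,\,T(\tilde H)\bigr),
\]
since $\Hom$ out of a direct sum is a product and the idempotents force the off-diagonal terms to vanish. Your final paragraph then worries that a product decomposition would ``obstruct exactness'', but this is false: arbitrary products of short exact sequences of modules over a ring are exact, so componentwise surjectivity of $\Hom(e_{\tilde H}N,T(\tilde H))\to\Hom(e_{\tilde H}N',T(\tilde H))$ immediately gives surjectivity of the product map. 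Thus the delicate point is not the one you flag; the only thing actually requiring the torsion hypothesis is the splitting $N=\bigoplus_{\tilde H}e_{\tilde H}N$ itself, and once you have that the rest is formal. Replace the direct sum by a product in your display, delete the final paragraph, and the argument is clean.
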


\subsection{Spheres of complex representations}
\label{sub:spheres}
We can now define the fundamental homology functor $\pi_*^{\A}$ \cite[Definition 1.4]{john:torusI}. Given a rational $G$-spectrum $X$ we can define the sheaf $\pi_*^{\A}(X)\in \A(G)$ that on a connected subgroup $H$ takes the value
\begin{equation}
	\label{eq:paistarfunctor}
	\pai \A(X)(H):=\einv H\of \tens {\oef H}\pi_*^{G/H}(\defhp H \wedge \fix HX).
\end{equation}
Where $\fix H$ is the geometric fixed point functor, $E\F_+$ is the universal space for the family $\F$ of finite subgroups with a disjoint basepoint added, and $\defp=F(E\F_+, S^0)$ is its functional dual (The function spectrum of maps from $E\F_+$ to $S^0$). 
\begin{ex}
	\label{ex:modelsphere}
	For the sphere spectrum we obtain the structure sheaf $\O$ \cite[Theorem 1.5]{john:torusI}:
	\[
	\pai \A(S^0)(H) =\einv H\of \tens {\oef H}\oef H
	\]
\end{ex}
Given a complex representation $V$ of $G$ we want to make explicit the object $\pai \A(S^V)$ \cite[Section 2.B.]{john:torusII}. To do so we need first to introduce suspensions:
\begin{definition}
	If $V$ is an $n$-dimensional complex representation of $G$, divide the family $\F$ of finite subgroups of $G$ into $n+1$ disjoint sets $\F_i$, where 
	\begin{equation*}
		\F_i:=\{F\in \F \mid \dim_{\C}(V^F)=i\}.
	\end{equation*}
	If $M$ is an $\of$-module, define the $V$-th suspension of $M$ to be the $\of$-module:
	\begin{equation*}
		\susp VM:=\bigoplus_{i=0}^n \susp{2i}e_{\F_i}M,
	\end{equation*}
	where $e_{\F_i}\in \of$ is the idempotent associated to $\F_i$ (it has a one in the $F$-th component if $F\in \F_i$ and zero everywhere else).
\end{definition}
The value of the sheaf $\pai \A(S^V)$ at a connected subgroup $H$ is:
\begin{equation}
	\label{eq:sphere}
	\pai \A(S^V)(H) =\einv H\of \tens {\oef H}\Sigma^{V^H}\oef H.
\end{equation}
To describe the structure maps it is convenient to use the suspension of the units:
\begin{equation*}
	\i {V^H} :=\Sigma^{V^H}(1)\in \Sigma^{V^H}\oef H
\end{equation*}
so that for every inclusion of connected subgroups $K\subseteq H$ the structure map $\beta_K^H$ is determined by the suspended unit:
\begin{equation}
	\label{eq:structuremaps}
	\beta_K^H(\i {V^K})= e(V^K-V^H)^{-1}\otimes \i {V^H}
\end{equation}
where the difference of the two representations simply means the orthogonal complement:
\[
V^K=V^H\oplus (V^K-V^H).
\]
\begin{rem}
	\label{rem:appendixnegative}
	The content of this section applies also in the case of a virtual complex representation $V=V_0-V_1$. The only thing to specify is the Euler class $e(V)=e(V_0)/e(V_1)$. As a result \eqref{eq:sphere} becomes:
	\begin{equation*}
		\pai \A(S^V)(H) =\einv H\of \tens {\oef H}\Sigma^{V_0^H-V_1^H}\oef H.
	\end{equation*}
\end{rem}

\printbibliography

\end{document}